\patchcmd{\abstract}{\scshape\abstractname}{\textbf{\abstractname}}{}{}
\DeclareMathAlphabet{\mathcal}{OMS}{cmsy}{m}{n}
\DeclareSymbolFont{operators}{OT1}{ztmcm}{m}{n}
\DeclareSymbolFont{letters}{OML}{ztmcm}{m}{it}
\DeclareSymbolFont{symbols}{OMS}{ztmcm}{m}{n}
\DeclareSymbolFont{largesymbols}{OMX}{ztmcm}{m}{n}
\DeclareSymbolFont{bold}{OT1}{ptm}{bx}{n}
\DeclareSymbolFont{italic}{OT1}{ptm}{m}{it}
\DeclareMathSymbol{\omicron}{0}{operators}{`\o}
\DeclareMathAlphabet{\mathpzc}{OT1}{pzc}{m}{it}
\DeclareSymbolFont{operators}{OT1}{txr}{m}{n}
\def\operator@font{\mathgroup\symoperators}
\DeclareSymbolFont{italic}{OT1}{txr}{m}{it}
\DeclareSymbolFontAlphabet{\mathrm}{operators}
\DeclareMathAlphabet{\mathbf}{OT1}{txr}{bx}{n}
\DeclareMathAlphabet{\mathit}{OT1}{txr}{m}{it}
\SetMathAlphabet{\mathit}{bold}{OT1}{txr}{bx}{it}
\DeclareSymbolFont{letters}{OML}{txmi}{m}{it}
\DeclareSymbolFont{lettersA}{U}{txmia}{m}{it}
\DeclareSymbolFontAlphabet{\mathfrak}{lettersA}
\DeclareSymbolFont{symbols}{OMS}{txsy}{m}{n}
\renewcommand\abstractname{\scshape\bfseries Abstract}
\renewenvironment{proof}[1][\proofname]{\par \pushQED{\qed} \normalfont
  \topsep6\p@\@plus6\p@ \trivlist \itemindent\z@
  \item[\hskip\labelsep\bfseries
    #1\@addpunct{.}]\ignorespaces
}{
  \popQED\endtrivlist\@endpefalse
}
    \renewcommand{\theequation}{{\thesection}.\@arabic\c@equation} %dodaje kropki
\def\section{\@ifstar\unnumberedsection\numberedsection}
\def\numberedsection{\@ifnextchar[%]
  \numberedsectionwithtwoarguments\numberedsectionwithoneargument}
\def\unnumberedsection{\@ifnextchar[%]
  \unnumberedsectionwithtwoarguments\unnumberedsectionwithoneargument}
\def\numberedsectionwithoneargument#1{\numberedsectionwithtwoarguments[#1]{#1}}
\def\unnumberedsectionwithoneargument#1{\unnumberedsectionwithtwoarguments[#1]{#1}}
\def\numberedsectionwithtwoarguments[#1]#2{%
  \ifhmode\par\fi
  \removelastskip
  \vskip 4ex\goodbreak
  \refstepcounter{section}%
  \noindent
  \begingroup
  \leavevmode\centering\scshape\bfseries
  \thesection.
  #2
  \par
  \endgroup
  \vskip 1ex\nobreak
  \addcontentsline{toc}{section}{%
    \protect\numberline{\thesection}%
    #1}%
  }
\def\unnumberedsectionwithtwoarguments[#1]#2{%
  \ifhmode\par\fi
  \removelastskip
  \vskip 2ex\goodbreak
%  \refstepcounter{section}%
  \noindent
  \begingroup
  \leavevmode\centering\scshape\bfseries
  \leavevmode\centering\scshape\bfseries
%  \thesection\quad
  #2
  \par
  \endgroup
  \vskip 1ex\nobreak
  \addcontentsline{toc}{section}{%
%    \protect\numberline{\thesection}%
    #1}%
}
\def\@seccntformat#1{\csname mythe#1\endcsname}
\let\latex@subsection\subsection
\def\subsection{\@ifstar{\refstepcounter{subsection}\latex@subsection*}{\latex@subsection}}
\def\@makechapterhead#1{%
  \vspace*{40\p@}%
  {\parindent \z@ \raggedright \normalfont
    \interlinepenalty\@M
    \Huge \bfseries #1\par \nobreak
    \vskip 40\p@
  }}
\let\latex@l@chapter\l@chapter
\def\l@chapter#1#2{\begingroup\let\numberline\@gobble\latex@l@chapter{#1}{#2}\endgroup}
\theoremstyle{plain}
\newtheorem{Th}{Theorem}[section]
\newtheorem{Prop}[Th]{Proposition}
\newtheorem{Lem}[Th]{Lemma}
\newtheorem{Cor}[Th]{Corollary}
\theoremstyle{definition}
\newtheorem{Rem}[Th]{Remark}
\newtheorem{Ex}[Th]{Example}
\newtheorem{Def}[Th]{Definition}
\def\noo{\partial}
\def\bf{\textbf}
\def\it{\textit}
\def\te{\textnormal}
\def\leq{\leqslant}
\def\geq{\geqslant}
\def\R{{\mathbb R}}
\def\N{{\mathbb N}}
\def\B{{\textit{B}}}
\def\D{{\mathrm{dom}}\,}
\def\E{{\mathrm{epi}}\,}
\def\G{{\mathrm{gph}}\,}
\def\I{{\mathrm{int}}\,}
\def\kr{\overline}
\begin{document}
\vspace*{-1.15cm}
\title{Representation of convex Hamilton--Jacobi equations\\ in optimal control theory}

%\author{am}

\author{\vspace*{-0.2cm}{Arkadiusz Misztela \textdagger}\vspace*{-0.2cm}}
\thanks{\textdagger\, Institute of Mathematics, University of Szczecin, Wielkopolska 15, 70-451 Szczecin, Poland; e-mail: arke@mat.umk.pl}

%\vspace*{-0.5cm}

\begin{abstract}
In the paper we study the following problem: given a Hamilton-Jacobi equation where the Hamiltonian is convex with respect to the last variable, are there any optimal control problems representing it? In other words, we search for an appropriately regular  dynamics and a Lagrangian that represents the Hamiltonian with given properties. This problem was lately researched by Frankowska-Sedrakyan (2014) and Rampazzo (2005). We introduce a new method to construct a representation of a wide class of Hamiltonians, wider than it was achieved before. Actually, we get two types of representations: with compact and noncompact  control set, depending on regularity of the Hamiltonian. We conclude the paper by proving the stability of representations.
\\%Otrzymane rezultaty wykorzystujemy do udowodnienia twierdzenia
\vspace{0mm}

\hspace{-1cm}
\noindent  \bf{Keywords.} Hamilton--Jacobi equations, optimal control theory, representation of Hamiltonians, \\
\hspace*{-0.55cm} parametrization of set-valued maps, convex analysis.

\vspace{2mm}\hspace{-1cm}
\noindent \bf{Mathematics Subject Classification.} 26E25, 49L25, 34A60, 46N10.
\end{abstract}

\maketitle

\pagestyle{myheadings}  \markboth{\small{ARKADIUSZ MISZTELA}
}{\small{REPRESENTATION OF HAMILTONIAN}}

%\subjclass{49J52, 49L25, 35B37}
%
%\keywords{Hamilton--Jacobi equation, optimal control, nonsmooth analysis,
%viability theory, viscosity solution.}

\vspace{-0.8cm}

%%%%%%%%%%%%%%%%%%%%%%%%%%%%%%%%%%%%%%%%%%%%%%%%%%%%%%%%%%%%%%%%%%%%%%%%%%%%%%
%%%%%%%%%%%%%%%%%%%%%%%%%%%%%%%%%%%%%%%%%%%%%%%%%%%%%%%%%%%%%%%%%%%%%%%%%%%%%%
%%%%%%%%%%%%%%%%%%%%%%%%%%%%%%%%%%%%%%%%%%%%%%%%%%%%%%%%%%%%%%%%%%%%%%%%%%%%%%
%%%%%%%%%%%%%%%%%%%%%%%%%%%%%%%%%%%%%%%%%%%%%%%%%%%%%%%%%%%%%%%%%%%%%%%%%%%%%%

%$\hat{a}, \Hat{a}. \tilde{a}, \Tilde{a}. \acute{a}, \Acute{a}. \bar{a}, \Bar{a}. \breve{a}, \Breve{a}. %\check{a}, $

%\thispagestyle{empty}
\section{Introduction}

\noindent Let  $A$-control set, $f$-dynamics, $l$-Lagrangian , $g$-terminal condition be given. For every $(t_0,x_0)\in[0,T]\times\R^n$ we consider the Bolza optimal control problem
\begin{equation}\label{bocp}
\begin{split}
\mathrm{minimize}&\;\;\;\Gamma((x,a)(\cdot)):= g(x(T))+\int_{t_0}^Tl(t,x(t),a(t))\,dt,\\
\mathrm{subject\;\, to}&\;\;\;\dot{x}(t)=f(t,x(t),a(t)),\;\; a(t)\in A\;\;\mathrm{a.e.}
\;\;t\in[t_0,T],\\
\mathrm{and}&\;\;\;x(t_0)=x_0.
\end{split}\tag{$\mathcal{P}_{t_0,x_0}$}
\end{equation}
While studying the problem \eqref{bocp}, one usually requires $f$ to be such that to every measurable control functions $a\!:\![t_0,\!T]\!\to\! A$ corresponds exactly one absolutely continuous solution $x\!:\![t_0,\!T]\!\to\!\R^n$  of the equation
\begin{equation}\label{rrdf}
\dot{x}(t)=f(t,x(t),a(t)),\;\; a(t)\in A\;\;\mathrm{a.e.}\;\;t\in[t_0,T]
\end{equation}
with the initial condition $x(t_0)=x_0$.
This is guaranteed, for instance, by the local Lipschitz continuity and the sublinear growth of $f$ with respect to $x$.  Let the  terminal condition $g$ be lower semicontinuous and the control set $A$ be compact in the space $\R^m$. We assume the Lagrangian $l$ is locally Lipschitz continuous with respect to $x$.  The above assumptions concerning the triple $(A,f,l)$ and  the terminal condition $g$ allow us to determine correctly the value function  $V$ for any $(t_0,x_0)\in[0,T]\times\R^n$ using the formula
\begin{equation}\label{fwfl}
V(t_0,x_0):= \inf\,\{\,\Gamma((x,a)(\cdot))\mid (x,a)(\cdot)\,\; \te{is a solution of}\,\;\eqref{rrdf}\,\; \te{with}\;\;x(t_0)=x_0 \,\}
\end{equation}

We say that the problems of optimal control $\{\eqref{bocp}\mid(t_0,x_0)\in[0,T]\times\R^n\}$ represent the following Hamilton-Jacobi  equation
\begin{equation}\label{rowhj}
\begin{array}{rll}
-V_{t}+ H(t,x,-V_{x})=0 &\!\!\textnormal{in}\!\! & (0,T)\times\R^n, \\[0mm]
V(T,x)=g(x) & \!\!\textnormal{in}\!\! & \;\R^n,
\end{array}
%\tag*{HJB}
\end{equation}
provided the value function \eqref{fwfl} is the unique viscosity solution of the equation \eqref{rowhj}.  The assumption about the triple $(A,f,l)$  and  the terminal condition $g$ imposed above, allow us to prove that the value function \eqref{fwfl} is the unique viscosity solution of the equation \eqref{rowhj} with  the Hamiltonian given by the formula
\begin{equation}\label{hfl}
H(t,x,p):=  \sup_{a\in A}\,\{\,\langle\, p\,,f(t,x,a)\,\rangle\,-\,l(t,x,a)\,\}.
\end{equation}
The details concerning this well-known model may be found in the wide monograph of Bardi and Capuzzo-Dolcetta \cite{B-CD}. The classic results about uniqueness of viscosity solutions of the equation \eqref{rowhj} are originated from Crandall-Lions \cite{C-L,C-L-87}, Crandall-Evans-Lions \cite{C-E-L}, Ishii \cite{HI0},  Lions-Perthame \cite{L-P} Barron-Jensen \cite{B-J-0,B-J}, Frankowska~\cite{HF}, Frankowska-Plaskacz-Rzeżuchowski \cite{F-P-Rz} and Subbotin \cite{IS}.

%\pagebreak
Now we shall state the converse problem that we are going to study in this article. We assume that given is a Hamilton-Jacobi equation  \eqref{rowhj} with lower semicontinuous terminal condition $g$ and the  Hamiltonian $H$ satisfying the \it{existence and uniqueness conditions}, i.e.
\begin{itemize}
\item[(EU1)] $H(\cdot,x,p)$ is measurable for all $x,p\in\R^n$ and $H(t,\cdot,\cdot)$ is continuous for all $t\in[0,T]$, moreover $H(t,x,\,\cdot\,)$ is convex and  Lipschitz continuous with the Lipschitz constant $c(t)(1+|x|)$ for all $t\in[0,T]$, $x\in\R^n$.
\item[(EU2)] $H(t,\,\cdot\,,p)$ is Lipschitz continuous on closed ball $\B(0,R)$ with the Lipschitz constant
$k_R(t)(1+|p|)$ for all $t\in[0,T]\setminus\mathcal{N}_R$, $p\in\R^n$ and $R>0$, where $\mathcal{N}_R$ is the null set.
\end{itemize}
In this pattern, one faces a natural question: do there exist optimal control problems $\{\eqref{bocp}\mid(t_0,x_0)\in[0,T]\times\R^n\}$  that represent the equation \eqref{rowhj}? In other words, we ask if there exist such a triple $(A,f,l)$ that satisfies the equation \eqref{hfl} and the conditions stated above. We call such a triple $(A,f,l)$ a  \it{faithful representation} of the Hamiltonian $H$ in the optimal control theory.
The use of the name ''faithful representation'' is justified by the fact that there is infinitely many triples $(A,f,l)$, satisfying the equation \eqref{hfl}, including the ones with totally irregular functions  $f,l$. The triple $(A,f,l)$, not necessarily regular and satisfying the equality \eqref{hfl}  is called a  \it{representation} of the Hamiltonian $H$.

Faithful representations of the Hamiltonians satisfying the existence and uniqueness conditions were studied  first by Rampazzo \cite{FR} and next by Frankowska-Sedrakyan \cite{F-S}. Rampazzo \cite{FR} focused on the research of continuous representations with respect to  $t$. Frankowska-Sedrakyan \cite{F-S}, in turn, studied measurable representations with respect to  $t$. Our results include continuous as well as measurable case. Earlier,  Ishii \cite{HI} proposed a representation involving continuous functions $f,l$ and expressed the solution of a stationary Hamilton-Jacobi equation as the value function of an associated infinite horizon optimal control problem. The lack of local Lipschitz function $f$ with respect to the variable $x$ in Ishii \cite{HI} paper causes that, in general, not to every control $u(\cdot)$ there corresponds exactly one trajectory $x(\cdot)$. This fact  causes a lot of troubles in applications. Our results imply that both the dynamics $f$ and the Lagrangian $l$ are locally Lipschitz continuous in $x$.  Theorems on representation are, in general, used to research the regularity of solutions of the equations \eqref{rowhj}. Therefore we need to construct functions $f,l$ as regular as it is possible.
More reference and comments concerning theorems on representations, one may find in \cite{F-S,FR}.

Now, we introduce the construction of faithful representation proposed by Rampazzo in \cite{FR} and compare it to our construction of faithful representation. We start with construction of Rampazzo, also used by Frankowska-Sedrakyan in \cite{F-S}. We consider the  Legendre-Fenchel transform of $H$  with respect to the last variable, called a Lagrangian,
\begin{equation}\label{tran1}
 L(t,x,v):= \sup_{p\in\mathbb{R}^{n}}\,\{\,\langle v,p\rangle-H(t,x,p)\,\}.
\end{equation}
It is possible for $L$ to achieve value~ $+\infty$. The set $\D\varphi=\{x\mid\varphi(x)\not=\pm\infty\}$ is called the \emph{effective domain} of $\varphi$. The condition (EU1) imposed on  $H$ implies that the set-valued map $F(t,x):=\D L(t,x,\cdot)$ has nonempty, bounded, convex values.  It might happen that  values of $F$ are not closed (see Ex.\,\ref{ex-3}). The closed values of $F$ are guaranteed by boundedness of the function $L(t,x,\cdot)$ on the effective domain $\D L(t,x,\cdot)$, assumed additionally by Rampazzo \cite{FR} and Frankowska-Sedrakyan \cite{F-S}. The condition (EU2) imposed on $H$ implies $F$ is locally Lipschitz continuous in the sense of Hausdorff's distance. By parametrization theorem \cite[Thm. 9.7.2]{A-F}, there exists single-valued map $f$, with the control set $A:=\B(0,1)\subset\R^{n}$, satisfying the local Lipschitz conditions with respect to $x$ and  $f(t,x,A)=F(t,x)$. If $H(t,x,\cdot)$ is finite and convex for each $(t,x)$, then by Fenchel-Moreau theorem we can retrieve~$H$ from $L$ by performing the Legendre-Fenchel transform for a second time:
\begin{equation}\label{tr2}
 H(t,x,p)=\sup_{v\in\mathbb{R}^{n}}\,\{\,\langle p,v\rangle-L(t,x,v)\,\}.
\end{equation}
We notice that by \eqref{tr2}, of the definition of $F$ and equality $f(t,x,A)=F(t,x)$ we get
\begin{equation*}
H(t,x,p)= \sup_{a\in A}\,\{\,\langle\, p\,,f(t,x,a)\,\rangle-L(t,x,f(t,x,a))\,\}.
\end{equation*}
It means the triple $(A,f,l)$ is the representation of $H$ provided $l(t,x,a):= L(t,x,f(t,x,a))$. The question of function $l$ regularity still needs to be clarified. It turns out that with the above assumptions, the function $l$ may not be even continuous with respect to the variable $x$ (see Ex.\,\ref{prz-rep-010}).
It is the consequence of the fact that if $H$ satisfies the existence and uniqueness conditions, then the function $(x,v)\to L(t,x,v)$  is, in general, lower semicontinuous, even if the Lagrangian is bounded on the effective domain (see Ex.\,\ref{ex-1}). Therefore Rampazzo \cite{FR} and Frankowska-Sedrakyan \cite{F-S} introduced an additional assumption (H5). This assumption together with the assumption (EU2) implies that the function $(x,v)\to L(t,x,v)$ is Lipschitz continuous on $(\B(0,R)\times\R^n)\cap \D L(t,\cdot,\cdot)$ for all $t\in[0,T]\setminus\mathcal{N}_R$ and $R>0$. Hence, in an obvious way we obtain, that $l$ satisfies the local Lipschitz continuity with respect to $x$. Examples show that (H5) is a strong assumption. Hence Rampazzo states in \cite{FR} an open question -- how the assumption (H5) could be weakened. Another problem connected to the considered construction of the faithful representation concerns the unboundedness on the Lagrangian on the effective domain. In this case, it might happen that $F$ does not have closed values (see Ex.\,\ref{ex-4}). Therefore the parametrization theorem \cite[Thm. 9.7.2]{A-F}, utilized to construct the function $f$ may not be applied to $F$. Our construction of faithful representation, to which we now move, nicely handles indicated problems.

Let us define the set-valued map $E_L:[0,T]\times\R^n\multimap\R^n\times\R$ by the following formula
\begin{equation*}
E_L(t,x):= \{\,(v,\eta)\in\R^n\times\R\mid L(t,x,v)\leq \eta\,\}.
\end{equation*}
The set $\E \varphi=\{(x,\eta)\mid\varphi(x)\leq \eta\}$ is called the \emph{epigraph} of $\varphi$. We notice that $E_L(t,x)=\E L(t,x,\cdot)$. The condition (EU1) imposed on  $H$ implies $E_L$ has nonempty, closed, convex values. Off course, $E_L$ has unbounded values. The condition (EU2) imposed on $H$ implies $E_L$ is locally Lipschitz continuous in the sense of Hausdorff's distance. This fact is known, e.g. see \cite[Lem. 2 in Sec. 4.2]{FC}. From parametrization theorem \cite[Thm. 9.7.1]{A-F}, there follows the existence of a single-valued map $e:[0,T]\times\R^n\times A\rightarrow\R^n\times\R$, with the control set $A:=\R^{n+1}$ that satisfies the local Lipschitz continuity with respect to $x$ and
\begin{equation*}\label{rep_epi}
e(t,x,A)=E_L(t,x).
\end{equation*}
We define the function $f:[0,T]\times\R^n\times A\rightarrow\R^n$ and $l:[0,T]\times\R^n\times A\rightarrow\R$ by formulae:
\begin{equation}\label{def-fl}
f(t,x,a):=\pi_v(e(t,x,a))\;\;\;\te{and}\;\;\;l(t,x,a):=\pi_\eta(e(t,x,a)),
\end{equation}
where $\pi_v(v,\eta)=v$ and $\pi_\eta(v,\eta)=\eta$ for all $v\in\R^n$ and $\eta\in\R$. Then for any $t\in[0,T]$, $x\in\R^n$, $a\in A$ the following equality holds
\begin{equation*}
e(t,x,a)=(f(t,x,a),l(t,x,a)).
\end{equation*}
It follows from the above that the functions  $f,l$ have the same properties as the function $e$. Moreover, we prove that the triple $(A,f,l)$ constructed like above, is the representation of $H$ and  $f(t,x,A)=\D L(t,x,\cdot)$ (see Prop. \ref{prop-reprezentacja H}). Let us notice that for the above construction of the faithful representation, the assumption on Lagrangian boundedness on the effective domain as well as the assumption (H5) are superfluous. However, the control set is noncompact.
This result allows us to study the Hamilton-Jacobi equations~\eqref{rowhj} with Hamiltonians whose Lagrangians are not bounded on effective domain.
In addition to this, it gives the base for research of Hamilton-Jacobi equations~\eqref{rowhj} with Hamiltonians satisfying even more general conditions of existence and uniqueness studied in articles~\cite{DM-F-V,G,AM2,P-Q, FR0}.

The construction of the faithful representation with a compact control set is more complicated than the previous one. It turns out that for the Hamiltonians satisfying the existence and uniqueness conditions, the faithful representation with a compact control set  not always exist. The necessary condition for existence of the faithful representation with a compact control set  is the boundedness of the Lagrangian on the effective domain. To be more precise, we prove that if there exists a faithful representation $(A,f,l)$ of the Hamiltonian $H$ with a compact control set $A$, then there exists a function $\lambda:[0,T]\times\R^n\to\R$ with $\lambda(t,x)\geq L(t,x,v)$ for every $t\in[0,T]$, $x\in\R^n$, $v\in\D L(t,x,\cdot)$ and  $\lambda(t,\cdot)$ is locally Lipschitz continuous for all $t\in[0,T]$ (see Thm.\,\ref{podr22_th_wk}). It means that if the  Lagrangian is not bounded on the effective domain (see Ex.\,\ref{ex-4}), then there are no faithful representation with a compact control set. Hence, as it follows from our results, the assumption on boundedness of the Lagrangian on the effective domain proposed by  Rampazzo \cite{FR} and Frankowska-Sedrakyan \cite{F-S} was not superfluous.

Now we describe the construction of the faithful representation with a compact control set. We assume that the Lagrangian $L$ is bounded on the effective domain by the function $\lambda$.  We define the set-valued map $E_{\lambda,L}:[0,T]\times\R^n\multimap\R^n\times\R$ by the following formula
\begin{equation*}
E_{\lambda,L}(t,x):=\{\,(v,\eta)\in\R^n\times\R\mid L(t,x,v)\leq \eta\leq \lambda(t,x)\,\}.
\end{equation*}
If, additionally, $H$ satisfies the existence and uniqueness conditions, we may show that $E_{\lambda,L}$ has nonempty, compact and convex values. However, $E_{\lambda,L}$ does not, in general, satisfy the local Lipschitz continuity in the sense of the Hausdorff's distance. Thus, applying parametrization theorem \cite[Thm. 9.7.2]{A-F} to $E_{\lambda,L}$, we will not achieve appropriately regular parametrization. Therefore, we propose other solution of this problem. It bases on constructing, with the use of the technics appearing in the proofs of parametrization theorems \cite[Thm. 9.7.1 and 9.7.2]{A-F}, a function $e:[0,T]\times\R^n\times A\rightarrow\R^n\times\R$, with the control set $A:=\B(0,1)\subset\R^{n+1}$, satisfying the local Lipschitz continuity with respect to $x$ and
\begin{equation}\label{zaw-rpr-ep}
E_{\lambda,L}(t,x)\subset e(t,x,A)\subset E_{L}(t,x).
\end{equation}
We notice that the function $e$ is neither a parametrization of $E_{\lambda,L}$ nor a parametrization of $E_{L}(t,x)$. Despite this, the inclusions \eqref{zaw-rpr-ep} suffice for the triple $(A,f,l)$  to be the representation of $H$, where  $f$ and $l$ are given by \eqref{def-fl} (see Prop. \ref{prop-reprezentacja H-ogr}).  Let us notice that for the above construction of the faithful representation, the assumption (H5) in \cite{F-S,FR} is superfluous. It means that our construction of the faithful representation solves the problem of Rampazzo stated in~\cite{FR}.

The main results of this article are proved while more general assumption then~(EU2) is supposed. Actually, we assume that the Hamiltonian~$H$ satisfies a condition of the type
\begin{equation}\label{wanzm}
\begin{array}{l}
\forall\,R>0\;\exists\,k_R(\cdot)\;\exists\,w_R(\cdot,\cdot)\;\forall\,t\in[0,T]\setminus\mathcal{N}_R\;\forall\,x,y\in\B(0,R)\;\forall\,p\in\R^n\\
|\,H(t,x,p)-H(t,y,p)\,|\,\leq\, k_R(t)\cdot|p|\cdot|x-y|+w_R(t,|x-y|),
\end{array}\tag{EU3}
\end{equation}
where $w_R(t,\cdot\,)$ is local modulus. The above condition still guarantees the existence and uniqueness of viscosity solution of the equation~\eqref{rowhj} (see \cite{C-L-87,HI}). We notice that the Hamiltonian~$H$ given by the formula~\eqref{hfl} satisfies the condition~\eqref{wanzm}, if $f(t,\cdot,a)$ is $k_R(t)$-Lipschitz and $l(t,\cdot,a)$ is $w_R(t,\cdot\,)$-continuous on the ball $\B(0,R)$ for all $t\in[0,T]\setminus\mathcal{N}_R$, $a\in A$ and $R>0$. In fact, we are interested in a converse situation, i.e. if the Hamiltonian~$H$ satisfies conditions~(EU1) and~\eqref{wanzm}, does there exists a representation~$(A,f,l)$ of the Hamiltonian~$H$ with the functions~$f,l$ with the above properties. Well, our results contained in Theorems~\ref{th-rprez-glo} and~\ref{th-rprez-glo12} do not answer this question. Indeed, functions~$f$ appearing in these Theorems do not satisfy the local Lipschitz condition with respect to~$x$. Right now, we work on the solution of this problem that lies, as we suppose, in the regularity of Steiner selection. We know the condition~\eqref{wanzm} implies that the set-valued map~$x\to\D L(t,x,\cdot)$ is locally Lipschitz continuous in the sense of the Hausdorff's distance. This fact is a corollary of Theorem~\ref{tw2_rlhmh} (see Rem. \ref{remdlsh}). In addition to this, it follows from our construction of a representation that~$f(t,x,A)=\D L(t,x,\cdot)$. Therefore~$f$ parameterizes the effective domain of the Lagrangian. Unfortunately, this does not imply that~$f$ satisfies the local Lipschitz condition with respect to~$x$. We still suppose that the function~$f$ with such a property may be constructed using a similar recipe like the one contained in this paper.

Another result we have obtained is the stability of the faithful representation constructed above. In Section \ref{thms-stab} we show that if $H_i$ converge uniformly to $H$ on compacts in $[0,T]\times\R^n\times \R^n$, then $f_i$ converge to $f$ and $l_i$ converge to $l$ uniformly on compacts in $[0,T]\times\R^n\times A$. The proof of this fact bases on Wijsman's Theorem~\cite[Thm. 11.34]{R-W}.

The outline of the paper is as follows. Section \ref{section-2} contains hypotheses and preliminary results. In Section~\ref{section-3} we gathered our main results. Sections \ref{wk-kon-istr}, \ref{section-5}, \ref{thms-stab} contain the proofs of results from Section~3.

%%%%%%%%%%%%%%%%%%%%%%%%%%%%%%%%%%%%%%%%%%%%%%%%%%%%%%%%%%%%%%%%%%%%%%%%%%%%%%
%%%%%%%%%%%%%%%%%%%%%%%%%%%%%%%%%%%%%%%%%%%%%%%%%%%%%%%%%%%%%%%%%%%%%%%%%%%%%%
%%%%%%%%%%%%%%%%%%%%%%%%%%%%%%%%%%%%%%%%%%%%%%%%%%%%%%%%%%%%%%%%%%%%%%%%%%%%%%
%%%%%%%%%%%%%%%%%%%%%%%%%%%%%%%%%%%%%%%%%%%%%%%%%%%%%%%%%%%%%%%%%%%%%%%%%%%%%%

\pagebreak

\section{Hypotheses and preliminary results}\label{section-2}

\noindent An extended-real-valued function is called \it{proper} if it never takes  the value $-\infty$ and is not identically equal to $+\infty$. If  $H(t,x,\cdot)$ is proper, convex and lower semicontinuous for each $(t,x)$, then the Lagrangian $L(t,x,\cdot)$ given by \eqref{tran1} also has  these properties (see \cite[Thm. 11.1]{R-W} ). Furthermore, by \cite[Thm. 11.1]{R-W} we have  $H(t,x,\cdot\,)=L^{\ast}(t,x,\cdot\,)$ and $L(t,x,\cdot\,)=H^{\ast}(t,x,\cdot\,)$,  where $^{\ast}$ denotes the Legendre-Fenchel transform. We denote by
$\langle v,p\rangle$ the scalar product of $v,p\in\R^n$ and by $|p|$ the Euclidean norm of $p$.

\vspace{1mm}

\noindent\bf{\textsf{Let us describe the hypotheses needed in this paper.}}
\begin{enumerate}
\item[\te{(H1)}] $H:[0,T]\times\mathbb{R}^{n}\times\mathbb{R}^{n}\rightarrow\mathbb{R}$
is $t-$measurable for any $(x,p)\in\R^n\times\R^n$;
\item[\te{(H2)}] $H(t,x,p)$ is continuous with respect to $(x,p)$ for every $t\in[0,T]$;
\item[\te{(H3)}] $H(t,x,p)$ is convex with respect to $p$ for every $(t,x)\in[0,T]\times\R^n$;
\item[\te{(H4)}] There exists a measurable map $c:[0,T]\to\R_+$ such that for any $(t,x)\in[0,T]\times\R^n$\\ and for all $p,q\in\R^n$ it satisfies
\;$|H(t,x,p)-H(t,x,q)|\;\leq\; c(t)(1+|x|)|p-q|$.
\end{enumerate}

\begin{Prop}\label{prop2-fmw} We suppose that $H$ satisfies \te{(H1)$-$(H3)}. If $L$ is given by the formula \eqref{tran1}, then
\begin{enumerate}
\item[\te{(L1)}] $L:[0,T]\times\mathbb{R}^{n}\times\mathbb{R}^{n}\rightarrow\mathbb{R}\cup\{+\infty\}$ is Lebesgue$-$Borel$-$Borel measurable;
\item[\te{(L2)}] $L(t,x,v)$  is  lower semicontinuous  with respect to $(x,v)$ for every $t\in[0,T]$;
\item[\te{(L3)}] $L(t,x,v)$ is convex and proper with respect to $v$ for every $(t,x)\in[0,T]\times\R^n$;
\item[\te{(L4)}] $\forall\,(t,x,v)\in[0,T]\times\R^n\times\R^n\;\;\forall\,x_i\rightarrow x\;\;
\exists\,v_i\rightarrow v\;:\;L(t,x_i,v_i)\rightarrow L(t,x,v)$;
\item[]\hspace{-5mm}\textsf{Additionally, if $H$ satisfies \te{(H4)}, then}
\item[\te{(L5)}] $\forall\,(t,x,v)\in[0,T]\times\R^n\times\R^n \;:\; |v|>c(t)(1+|x|)\;\Rightarrow\; L(t,x,v) =+\infty$;
\item[]\hspace{-5mm}\textsf{Additionally, if $H$ is continuous, then $L$ is lower semicontinuous and}
\item[\te{(L6)}] $\forall\,(t,x,v)\in[0,T]\times\R^n\times\R^n\;\;\forall\,(t_i,x_i)\rightarrow (t,x)\;\;
\exists\,v_i\rightarrow v\;:\;L(t_i,x_i,v_i)\rightarrow L(t,x,v)$.
\end{enumerate}
\end{Prop}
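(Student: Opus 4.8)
The plan is to treat the six assertions in three groups according to the tools each requires.

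\smallskip

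First, the structural statements (L1)--(L3). Property (L3) is immediate from the duality recalled just before the proposition: for fixed $(t,x)$ the function $H(t,x,\cdot)$ is finite and convex, hence proper, convex and lower semicontinuous, so by \cite[Thm.~11.1]{R-W} its conjugate $L(t,x,\cdot)=H^{\ast}(t,x,\cdot)$ is proper and convex; properness also places the values of $L$ in $\R\cup\{+\infty\}$. For (L2) I would use $L(t,x,v)=\sup_{p}\{\langle v,p\rangle-H(t,x,p)\}$ and note that for each fixed $p$ the map $(x,v)\mapsto\langle v,p\rangle-H(t,x,p)$ is continuous by (H2); a pointwise supremum of continuous functions is lower semicontinuous. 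For (L1) the idea is to pass to a countable supremum: since $H(t,x,\cdot)$ is finite convex, it is continuous in $p$, so the supremum over $\R^n$ equals the supremum over $p\in\Q^n$. For each such $p$ the function $(t,x,v)\mapsto\langle v,p\rangle-H(t,x,p)$ is a Carath\'eodory function (measurable in $t$ by (H1), continuous in $(x,v)$ by (H2)), hence Lebesgue--Borel--Borel measurable, and a countable supremum of such functions keeps this measurability, giving (L1).

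\smallskip

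Second, the support estimate (L5). Assuming (H4), fix $(t,x)$ and set $M:=c(t)(1+|x|)$, so $H(t,x,\cdot)$ is $M$-Lipschitz. If $|v|>M$, I would test the defining supremum along the ray $p=s\,v/|v|$ with $s>0$: then $\langle v,p\rangle=s|v|$ while $H(t,x,p)\le H(t,x,0)+Ms$, so $\langle v,p\rangle-H(t,x,p)\ge s(|v|-M)-H(t,x,0)\to+\infty$ as $s\to+\infty$, forcing $L(t,x,v)=+\infty$.

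\smallskip

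Third, and this is the heart of the matter, the recovery-sequence statements (L4) and (L6). The clean route is through epi-convergence and its behaviour under conjugation. Fix $(t,x,v)$ and a sequence $x_i\to x$ (for (L6), $(t_i,x_i)\to(t,x)$). By (H2) (respectively joint continuity of $H$) the finite convex functions $H(t,x_i,\cdot)$ converge pointwise to the finite convex function $H(t,x,\cdot)$; for convex functions on $\R^n$ pointwise convergence to a finite limit is automatically uniform on compact sets, hence they epi-converge. Wijsman's theorem \cite[Thm.~11.34]{R-W} transfers epi-convergence to the conjugates, so $L(t,x_i,\cdot)$ epi-converges to $L(t,x,\cdot)$. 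The two halves of epi-convergence then give (L4): the $\liminf$-inequality (already contained in the lower semicontinuity (L2)) yields $\liminf_i L(t,x_i,v_i)\ge L(t,x,v)$ for every $v_i\to v$, while the recovery part furnishes a particular sequence $v_i\to v$ with $\limsup_i L(t,x_i,v_i)\le L(t,x,v)$; together these give $L(t,x_i,v_i)\to L(t,x,v)$. When $L(t,x,v)=+\infty$ one may simply take $v_i=v$ and invoke (L2). Statement (L6) is obtained verbatim with $(t_i,x_i)$ in place of $x_i$, once one records that joint continuity of $H$ makes $L$ jointly lower semicontinuous by the same supremum-of-continuous-functions argument used for (L2).

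\smallskip

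The main obstacle is the construction of the recovery sequence in (L4) and (L6): lower semicontinuity supplies only one of the two inequalities, and producing a sequence $v_i\to v$ that does not overshoot the limiting value needs genuine input, furnished here by the stability of the Legendre--Fenchel transform under epi-convergence (Wijsman) together with the automatic upgrade of pointwise to locally uniform convergence for convex functions.
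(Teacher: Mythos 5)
Your proof is correct and takes essentially the same approach as the paper, which offers no detailed argument but states only that the proposition follows from well-known properties of the Legendre--Fenchel transform in \cite{R-W}. Your key tools --- \cite[Thm. 11.1]{R-W} for (L1)--(L3), the ray argument for (L5), and Wijsman's theorem \cite[Thm. 11.34]{R-W} combined with epi-convergence of finite convex functions for (L4)/(L6) --- are exactly those properties, and the paper itself confirms this route by proving the analogous Proposition \ref{epiconv} as "a consequence of Wijsman's Theorem."
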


Proposition \ref{prop2-fmw} can be proven using well-known properties of the Legendre-Fenchel transform that can be found in \cite{R-W}.

\begin{Def}
We say that a  set-valued map $F:[0,T]\multimap\R^m$ is \it{measurable}, if for every
open set $U\subset\R^m$ the inverse image  $F^{-1}(U):= \{\,t\in[0,T]\mid F(t)\cap U\not=\emptyset\,\}$
is Lebesgue-measurable set.
\end{Def}

From the condition (L1) we gather that a set-valued map $t\to E_L(t,x)$ is measurable for all $x\in\R^n$. The conditions (L2)$-$(L3) imply that
the set $E_L(t,x)$ is nonempty, closed and convex for all $t\in[0,T]$, $x\in\R^n$. The set $\G F:= \{\,(z,y)\mid y\in F(z)\,\}$ is called a \it{graph} of the set-valued map $F$.

\begin{Def}
We say that a set-valued map $F:\R^n\multimap\R^m$ is \it{lower semicontinuous} in  Kuratowski's sense, if for every open set $U\subset\R^m$ the inverse image $F^{-1}(U)$ is an open set. It is equivalent to the following condition
\begin{equation*}\label{roz1-lkura}
\forall\,(z,y)\in\G F\;\;\forall\;z_i\rightarrow z\;\;\exists\;y_i\rightarrow y\;:
\; y_i\in F(z_i)\;\;\te{for all large}\;\;i\in\N.
\end{equation*}
\end{Def}

The condition (L4) means that a set-valued map $x\to E_L(t,x)$ is lower semicontinuous for every $t\in[0,T]$. The condition (L2) implies that a  set-valued map $x\to E_L(t,x)$ is not only closed-valued,
but also it has the closed graph for every $t\in[0,T]$. For a nonempty subset $K$ of $\R^n$ we define $\|K\|:=\sup_{x\in K}|x|$. From the condition (L5) we have that $\|\D L(t,x,\cdot)\|\leq c(t)(1+|x|)$ for every $t\in[0,T]$, $x\in\R^n$. If  $L$ is lower semicontinuous
and satisfies (L6), then the set-valued map $E_L$ has a closed graph and is lower semicontinuous. Combining the above facts we obtain the following corollary.

\begin{Cor}\label{wrow-wm}
We suppose that $H$ satisfies \te{(H1)$-$(H3)}. If $L$ is given by \eqref{tran1}, then
\begin{enumerate}
\item[\te{(M1)}] $E_L(t,x)$ is a nonempty, closed, convex subset of $\;\R^{n+1}$ for all $(t,x)\in[0,T]\times\R^n$;
\item[\te{(M2)}] $x\to E_L(t,x)$ has a closed graph for every $t\in[0,T]$;
\item[\te{(M3)}] $x\to E_L(t,x)$ is lower semicontinuous for every  $t\in[0,T]$;
\item[\te{(M4)}] $t\to E_L(t,x)$ is measurable for every $x\in\R^n$;
\item[]\hspace{-5mm}\textsf{Additionally, if $H$ satisfies \te{(H4)}, then}
\item[\te{(M5)}] $\|\D L(t,x,\cdot)\|\leq c(t)(1+|x|)$ for every $(t,x)\in[0,T]\times\R^n$;
\item[]\hspace{-5mm}\textsf{Additionally, if  $H$ is continuous, then}
\item[\te{(M6)}] $(t,x)\to E_L(t,x)$ has a closed graph and is lower semicontinuous.
\end{enumerate}
\end{Cor}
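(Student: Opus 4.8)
The plan is to read off each assertion (M1)--(M6) as the epigraphical reformulation of the corresponding property of the Lagrangian established in Proposition \ref{prop2-fmw}, using throughout the identity $E_L(t,x)=\E L(t,x,\cdot)$. The key tool is the standard dictionary between a function and its epigraph: $L(t,x,\cdot)$ proper makes $\E L(t,x,\cdot)$ nonempty, $L(t,x,\cdot)$ lower semicontinuous makes it closed, and $L(t,x,\cdot)$ convex makes it convex. Thus (M1) follows at once from (L2) and (L3). I would likewise obtain (M5) directly from (L5): if $|v|>c(t)(1+|x|)$ then $L(t,x,v)=+\infty$, so $v\notin\D L(t,x,\cdot)$, whence $\D L(t,x,\cdot)\subset\B(0,c(t)(1+|x|))$ and the bound on $\|\D L(t,x,\cdot)\|$ is immediate.

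For the graph-closedness statements (M2) and (M6) I would argue by sequences. For (M2), fix $t$ and take $(x_i,v_i,\eta_i)\to(x,v,\eta)$ with $(v_i,\eta_i)\in E_L(t,x_i)$, i.e. $L(t,x_i,v_i)\leq\eta_i$; the lower semicontinuity of $(x,v)\to L(t,x,v)$ from (L2) gives $L(t,x,v)\leq\liminf_i L(t,x_i,v_i)\leq\liminf_i\eta_i=\eta$, so $(v,\eta)\in E_L(t,x)$. The closed-graph half of (M6) is the same computation with the triple $(t,x,v)$ in place of $(x,v)$, now using the joint lower semicontinuity of $L$ available when $H$ is continuous.

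The lower-semicontinuity assertions (M3) and (M6) are where the only genuine work lies, since a Kuratowski-continuity statement for a set-valued map is slightly stronger than the pointwise recovery of values expressed by (L4) and (L6). For (M3), fix $t$, take $(v,\eta)\in E_L(t,x)$ and $x_i\to x$; since $\eta$ is finite, $L(t,x,v)$ is finite, and (L4) furnishes $v_i\to v$ with $L(t,x_i,v_i)\to L(t,x,v)\leq\eta$. I would lift this to the epigraph by setting $\eta_i:=\eta+|L(t,x_i,v_i)-L(t,x,v)|$, so that $\eta_i\to\eta$ and $L(t,x_i,v_i)\leq\eta_i$, i.e. $(v_i,\eta_i)\to(v,\eta)$ with $(v_i,\eta_i)\in E_L(t,x_i)$. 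This is exactly the sequential characterization of lower semicontinuity of $x\to E_L(t,x)$, and the identical construction with (L6) yields the joint lower semicontinuity in (M6).

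Finally, (M4) is the measurability statement, which I would deduce from the Lebesgue--Borel--Borel measurability (L1) of $L$ together with its lower semicontinuity in $v$: for fixed $x$ these make $(t,v)\to L(t,x,v)$ a normal integrand, and the epigraphical map $t\to\E L(t,x,\cdot)=E_L(t,x)$ of a normal integrand is measurable by the standard theory of such maps (e.g. \cite[Ch. 14]{R-W}). I expect the main obstacle to be presentational rather than conceptual; the one point genuinely needing care is the lifting step in (M3) and (M6), where the approximating sequence $v_i$ for the value of $L$ must be promoted to an approximating sequence $(v_i,\eta_i)$ inside the epigraph without disturbing convergence of the height coordinate.
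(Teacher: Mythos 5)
Your proposal is correct and takes essentially the same route as the paper: the paper's ``proof'' is exactly the paragraph preceding the corollary, which reads off each of (M1)--(M6) from the corresponding item (L1)--(L6) of Proposition~\ref{prop2-fmw} via the function--epigraph dictionary, just as you do. Your sequential closed-graph arguments, the lifting $\eta_i:=\eta+|L(t,x_i,v_i)-L(t,x,v)|$ for the lower-semicontinuity statements, and the normal-integrand justification of (M4) merely supply the details the paper leaves implicit, and all of them are sound.
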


\subsection{Lipschitz set-valued map $\pmb{x\to E_L(t,x)}$}
In this section we present Hausdorff continuity of a set-valued map in  Lagrangian and Hamiltonian terms. The notations $B_R$ and $B(0,R)$ stand for the closed ball in $\R^n$ centered at zero and with radius $R\geq 0$. Additionally, we set $B:= B_1$. The function $w:[0,T]\times[0,\infty)\to[0,\infty)$ will be called \it{modulus}, if $w(\cdot,r)$ is measurable for any $r\in[0,\infty)$ and $w(t,\cdot)$ is  nondecreasing, continuous and subadditive function satisfying $w(t,0)=0$  for all $t\in[0,T]$.

\vspace{1mm}
\noindent\bf{\textsf{The main hypothesis of this paper.}}
\vspace{-1mm}
\begin{equation}\label{hip_l1}
\begin{array}{l}
\te{For any}\; \,R>0\, \;\te{there exists}\,\; k_R:[0,T]\to\R_+-\te{measurable}\;\,\te{and}\;\, w_R(\cdot,\cdot)-\te{modulus}\;\,\te{and}\\\mathcal{N}_R-\te{null set such that}\;\;
|\,H(t,x,p)-H(t,y,p)\,|\,\leq\, k_R(t)\,|p|\,|x-y|+w_R(t,|x-y|)\;\; \te{for all}\\ t\in[0,T]\setminus\mathcal{N}_R \;\;\te{and for every}\;\; x,y\in\B_R,\; p\in\R^n.
\end{array}
\tag{HLC}
\end{equation}

\noindent The above condition is needed for the uniqueness of the solution of the equation  \!\eqref{rowhj}. \!It is described in \cite{AM0}.
\begin{Th}\label{tw2_rlhmh}
We suppose that a function $p\rightarrow H(t,x,p)$ is finite and convex for every
$(t,x)\!\in\! [0,\!T]\times\R^n$. Let  $L(t,x,\cdot\,)=H^{\ast}(t,x,\cdot\,)$ and  $H(t,x,\cdot\,)=L^{\ast}(t,x,\cdot\,)$ for every $(t,x)\in [0,T]\times\R^n$. Then we have
equivalences $\te{(HLC)}\Leftrightarrow\te{(LLC)}\Leftrightarrow\te{(MLC)}$\,\te{:}
\begin{equation}\label{hip_l2}
\begin{array}{l}
\it{For any}\; \,R>0\, \;\it{there exists}\,\; k_R:[0,T]\to\R_+-\it{measurable}\;\,\it{and}\;\; w_R(\cdot,\cdot)-\it{modulus}\;\, \it{and}\\ \mathcal{N}_R-\it{null set satisfying the condition}:\;\; \forall\,t\in[0,T]\setminus\mathcal{N}_R\;\, \forall\, x,y\in \B_R\;\,
\forall\, v\in\D L(t,x,\cdot)\\ \exists\,u\in\D L(t,y,\cdot)\;\; \it{such that}\;\; |u-v|\leq k_R(t)|y-x| \;\;\it{and}\;\; L(t,y,u)\leq L(t,x,v)+w_R(t,|x-y|).
\end{array}
\tag{LLC}
\end{equation}
\begin{equation}\label{hip_l3}
\begin{array}{l}
\it{For any}\; \,R>0\, \;\it{there exists}\,\; k_R:[0,T]\to\R_+-\it{measurable}\;\, \it{and}\;\, w_R(\cdot,\cdot)-\it{modulus}\;\, \it{and}\\\mathcal{N}_R-\it{null set such that}\;\;
E_L(t,x)\subset E_L(t,y)+(\,k_R(t)\,|x-y|\,\B\,)\times(\,w_R(t,|x-y|)\,[-1,1]\,)\\ \it{for all}\;\, t\in[0,T]\setminus\mathcal{N}_R\;\, \it{and}\;\,\it{for every}\;\, x,y\in\B_R.
\end{array}
\tag{MLC}
\end{equation}
Equivalences hold for the same functions $k_R(\cdot)$, $w_R(\cdot,\cdot)$
and the set $\mathcal{N}_R$.
\end{Th}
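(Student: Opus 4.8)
The plan is to funnel both equivalences through the support functions of the epigraphs $E_L(t,x)=\E L(t,x,\cdot)$. Fix $R>0$, $t\in[0,T]\setminus\mathcal{N}_R$ and $x,y\in\B_R$, and abbreviate $\kappa:=k_R(t)\,|x-y|$, $\omega:=w_R(t,|x-y|)$ and $C:=(\kappa\,\B)\times(\omega\,[-1,1])\subset\R^n\times\R$. For a nonempty closed set $K\subset\R^{n+1}$ I write $\sigma_K(\xi):=\sup_{z\in K}\langle\xi,z\rangle$, with $\langle(p,s),(v,\eta)\rangle:=\langle p,v\rangle+s\eta$. I will rely on two standard facts: (i) for nonempty closed convex $D_1,D_2$ one has $D_1\subset D_2\Leftrightarrow\sigma_{D_1}\leq\sigma_{D_2}$, and $\sigma_{D_2+C}=\sigma_{D_2}+\sigma_C$; (ii) the conjugacy $H(t,x,\cdot)=L^{\ast}(t,x,\cdot)$ identifies $H$ with a directional slice of $\sigma_{E_L}$. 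Concretely, maximizing over the epigraph gives, for every $\lambda>0$,
\[
\sigma_{E_L(t,x)}(p,-\lambda)=\lambda\sup_{v}\{\langle p/\lambda,v\rangle-L(t,x,v)\}=\lambda\,H(t,x,p/\lambda),
\]
whereas $\sigma_{E_L(t,x)}(p,0)=\sigma_{\D L(t,x,\cdot)}(p)$ and $\sigma_{E_L(t,x)}(p,s)=+\infty$ for $s>0$; also $\sigma_C(p,s)=\kappa|p|+\omega|s|$. In particular the slice $\lambda=1$ reads $\sigma_{E_L(t,x)}(p,-1)=H(t,x,p)$, the bridge between $H$ and $E_L$. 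All constants below are the given $k_R,w_R,\mathcal{N}_R$, so the equivalences hold with the same data, as claimed.

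To prove $\te{(MLC)}\Leftrightarrow\te{(HLC)}$ I note that $E_L(t,y)+C$ is closed and convex (a closed convex set plus a compact convex set), so by fact (i) condition (MLC), i.e. $E_L(t,x)\subset E_L(t,y)+C$, is equivalent to $\sigma_{E_L(t,x)}(p,s)\leq\sigma_{E_L(t,y)}(p,s)+\sigma_C(p,s)$ for all $(p,s)\in\R^n\times\R$. I verify this by splitting on the sign of $s$. For $s>0$ the right-hand side is $+\infty$ and there is nothing to prove. For $s=-\lambda<0$, dividing by $\lambda$ and using the computation above turns the inequality into $H(t,x,p/\lambda)\leq H(t,y,p/\lambda)+\kappa|p|/\lambda+\omega$, which is exactly the estimate furnished by (HLC) at the point $p/\lambda$; as $p,\lambda$ vary this is precisely (one side of) (HLC), and conversely (HLC) yields every such slice. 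The remaining direction $s=0$ is obtained by letting $\lambda\to0^{+}$ in the $s=-\lambda$ inequality. Running the slices for all $s$ recovers (MLC), while the single slice $s=-1$ recovers (HLC).

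The equivalence $\te{(LLC)}\Leftrightarrow\te{(MLC)}$ is pure epigraph bookkeeping. For (MLC)$\Rightarrow$(LLC), given $v\in\D L(t,x,\cdot)$ apply the inclusion to $(v,L(t,x,v))\in E_L(t,x)$, writing it as $(u,\zeta)+(a,b)$ with $(u,\zeta)\in E_L(t,y)$, $|a|\leq\kappa$, $|b|\leq\omega$; then $u=v-a$ satisfies $|u-v|\leq\kappa$ and $L(t,y,u)\leq\zeta=L(t,x,v)-b\leq L(t,x,v)+\omega$. For (LLC)$\Rightarrow$(MLC), given $(v,\eta)\in E_L(t,x)$ take $u$ from (LLC) and set $\zeta:=\max\{L(t,y,u),\,\eta-\omega\}$; then $(u,\zeta)\in E_L(t,y)$, while $(v,\eta)-(u,\zeta)=(v-u,\eta-\zeta)$ lies in $C$ because $|v-u|\leq\kappa$ and, using $L(t,y,u)\leq\eta+\omega$, one gets $-\omega\leq\eta-\zeta\leq\omega$.

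The support-function arithmetic and the epigraph manipulation are routine; the one delicate step is the $s=0$ case of (HLC)$\Rightarrow$(MLC), where the Lipschitz control must be transported to the effective domains. Here (HLC), which a priori only controls the slice $s=-1$, delivers the needed estimate $\sigma_{\D L(t,x,\cdot)}(p)\leq\sigma_{\D L(t,y,\cdot)}(p)+\kappa|p|$ only through a recession (horizon) limit: since $H(t,x,\cdot)$ is finite and convex, $\lambda H(t,x,p/\lambda)\to\sigma_{\D L(t,x,\cdot)}(p)$ as $\lambda\to0^{+}$, the modulus term $\lambda\omega$ vanishes, and the linear term $\kappa|p|$ is unaffected. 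Making this passage to the limit rigorous — in particular checking that the limit equals the $s=0$ support value even when $\D L(t,x,\cdot)$ is unbounded — is the crux of the argument.
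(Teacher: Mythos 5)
Your proof is correct, and for the Hamiltonian half of the theorem it takes a genuinely different route from the paper's. The paper chains \eqref{hip_l1}$\Leftrightarrow$\eqref{hip_l2}$\Leftrightarrow$\eqref{hip_l3}: Proposition \ref{0rwwl1} converts the Hamiltonian inequality into \eqref{hip_l2} by conjugating the sum $h_1+h_2$, with $h_1:=H(t,y,\cdot)$ and $h_2:=k_R(t)\,|\cdot|\,|x-y|+w_R(t,|x-y|)$, through the epi-sum identity $(h_1+h_2)^{\ast}=h_1^{\ast}\,\sharp\,h_2^{\ast}$ of Lemma \ref{0epi-suma-lem}, the required $u$ being an attained minimizer of the lower semicontinuous function $L(t,y,\cdot)$ on the compact ball $\{u\,:\,|u-v|\leq k_R(t)|x-y|\}$; Proposition \ref{0lnrwwl1} then supplies the epigraph bookkeeping \eqref{hip_l2}$\Leftrightarrow$\eqref{hip_l3}. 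You instead go \eqref{hip_l1}$\Leftrightarrow$\eqref{hip_l3} directly, reading the inclusion as the support-function inequality $\sigma_{E_L(t,x)}\leq\sigma_{E_L(t,y)}+\sigma_C$ with $C:=(k_R(t)|x-y|\,\B)\times(w_R(t,|x-y|)[-1,1])$, and slicing in $s$: the slices $s<0$ reproduce exactly the \eqref{hip_l1} inequalities via $\sigma_{E_L(t,x)}(p,-\lambda)=\lambda H(t,x,p/\lambda)$, the slice $s>0$ is vacuous, and the slice $s=0$ follows from the recession limit $\lambda H(t,x,p/\lambda)\to\sigma_{\D L(t,x,\cdot)}(p)$ as $\lambda\to 0^{+}$. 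That limit --- your self-declared crux --- is not a gap: it is the standard dualization fact that the horizon (recession) function of $H(t,x,\cdot)=L^{\ast}(t,x,\cdot)$ equals the support function of $\D L(t,x,\cdot)$, valid in $\R\cup\{+\infty\}$ also when the domain is unbounded (cf. \cite{R-W}), and finiteness of $H$ lets you take $0$ as the base point of the difference quotients. Your \eqref{hip_l2}$\Leftrightarrow$\eqref{hip_l3} step coincides with Proposition \ref{0lnrwwl1} up to the choice of lift (the paper simply takes $\mu:=\eta+w_R(t,|x-y|)$ where you take a maximum; both work). As for what each approach buys: yours is more geometric, dispenses with the epi-sum lemma, and concentrates all the analysis in one citable recession identity plus the closedness of a closed-plus-compact sum and the support-function criterion for inclusion of convex sets; the paper's route produces \eqref{hip_l2} with an attained minimizer directly from \eqref{hip_l1} --- the form actually exploited later, e.g.\ in Remark \ref{remdlsh} --- and its Proposition \ref{0rwwl1} is established for merely proper, possibly $+\infty$-valued Hamiltonians, slightly beyond the finiteness hypothesis of Theorem \ref{tw2_rlhmh}. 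Both arguments are pointwise in $(t,x,y)$, so both justify the final claim that the equivalences hold with the same $k_R(\cdot)$, $w_R(\cdot,\cdot)$ and $\mathcal{N}_R$.
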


Theorem \ref{tw2_rlhmh} is an extended version of Clarke's result (see \cite[Lem. 2 in Sec. 4.2]{FC}). Our Theorem \ref{tw2_rlhmh} follows from Propositions \ref{0rwwl1} and \ref{0lnrwwl1} that are proven below. Let $K$ be a nonempty subset of $\R^m$. The distance from $x\in\R^m$ to $K$ is defined by $d(x,K):=\inf_{y\in K}|x-y|$. For nonempty subsets $K$, $D$ of $\R^m$, the extended Hausdorff distance between $K$ and $D$ is defined by
\begin{equation}
\mathscr{H}(K,D):= \max\left\{\sup_{x\in K}d(x,D),\;\sup_{x\in D}d(x,K)\right\}\in\R\cup\{+\infty\}.
\end{equation}
By Theorem \ref{tw2_rlhmh} and the condition \eqref{hip_l3} we obtain the following corollary.

\begin{Cor}\label{hlc-cor-ner}
We suppose that  $H$ satisfies  \te{(H1)$-$(H3)} and \eqref{hip_l1}. If $L$ is given by \eqref{tran1}, then
for any $R>0$, for all $t\in[0,T]\setminus\mathcal{N}_R$ and for every $x,y\in \B_R$ we have
\begin{equation}\label{n2_hd}
\mathscr{H}(E_L(t,x),E_L(t,y))\leq 2k_R(t)\,|x-y|+2w_R(t,|x-y|).
\end{equation}
\end{Cor}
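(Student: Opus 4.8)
The plan is to obtain the conclusion directly from the equivalence $\te{(HLC)}\Leftrightarrow\te{(MLC)}$ provided by Theorem~\ref{tw2_rlhmh}, and then to turn the one-sided set inclusion in \eqref{hip_l3} into a two-sided Hausdorff estimate. First I would verify that Theorem~\ref{tw2_rlhmh} is applicable: by (H1) the Hamiltonian is real-valued, while (H2)--(H3) make $p\to H(t,x,p)$ finite and convex, hence (being finite convex on all of $\R^n$) continuous and lower semicontinuous, so the conjugacy relations $L(t,x,\cdot)=H^{\ast}(t,x,\cdot)$ and $H(t,x,\cdot)=L^{\ast}(t,x,\cdot)$ hold for every $(t,x)$, exactly as recorded in Section~\ref{section-2}. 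Since $H$ satisfies \eqref{hip_l1} by assumption, Theorem~\ref{tw2_rlhmh} then guarantees that the epigraphical map $E_L$ satisfies \eqref{hip_l3} with the same data $k_R(\cdot)$, $w_R(\cdot,\cdot)$ and null set $\mathcal{N}_R$.

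Next, fix $R>0$, $t\in[0,T]\setminus\mathcal{N}_R$ and $x,y\in\B_R$, and abbreviate $S:=(\,k_R(t)\,|x-y|\,\B\,)\times(\,w_R(t,|x-y|)\,[-1,1]\,)\subset\R^{n+1}$. Condition \eqref{hip_l3} reads $E_L(t,x)\subset E_L(t,y)+S$. To estimate the excess of $E_L(t,x)$ over $E_L(t,y)$, I would take an arbitrary $z\in E_L(t,x)$ and write $z=z'+s$ with $z'\in E_L(t,y)$ and $s=(a,b)\in S$, so that $|a|\leq k_R(t)|x-y|$ and $|b|\leq w_R(t,|x-y|)$. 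Then
\begin{equation*}
d(z,E_L(t,y))\leq|s|=\sqrt{|a|^2+|b|^2}\leq|a|+|b|\leq k_R(t)|x-y|+w_R(t,|x-y|),
\end{equation*}
and taking the supremum over $z\in E_L(t,x)$ bounds $\sup_{z\in E_L(t,x)}d(z,E_L(t,y))$ by $k_R(t)|x-y|+w_R(t,|x-y|)$.

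Finally, since $|x-y|=|y-x|$ and Theorem~\ref{tw2_rlhmh} furnishes the same constants for the pair $(y,x)$, interchanging $x$ and $y$ gives the symmetric inclusion $E_L(t,y)\subset E_L(t,x)+S$ and hence the identical bound for $\sup_{z\in E_L(t,y)}d(z,E_L(t,x))$. Taking the maximum of the two excesses yields $\mathscr{H}(E_L(t,x),E_L(t,y))\leq k_R(t)|x-y|+w_R(t,|x-y|)$, from which \eqref{n2_hd} follows a fortiori; the factor $2$ in the statement is simply a comfortable over-estimate (it already results from bounding each coordinate block of $S$ by its diameter $2k_R(t)|x-y|$, respectively $2w_R(t,|x-y|)$). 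I do not anticipate a genuine obstacle here, since the whole substance of the corollary is carried by Theorem~\ref{tw2_rlhmh}; the only points that require a little care are checking the conjugacy hypotheses of that theorem and handling the product geometry of $S$ in the Euclidean norm on $\R^{n+1}$.
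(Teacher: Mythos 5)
Your proposal is correct and follows exactly the paper's route: the paper derives the corollary in one line from Theorem~\ref{tw2_rlhmh} and the inclusion \eqref{hip_l3}, which is precisely what you do, after rightly checking that (H1)--(H3) give the conjugacy hypotheses of that theorem. Your elaboration even shows the sharper bound $k_R(t)|x-y|+w_R(t,|x-y|)$, so the paper's factor $2$ in \eqref{n2_hd} is indeed just a harmless over-estimate.
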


\begin{Rem}\label{remdlsh}
Using the condition \eqref{hip_l2} we easily infer that a set-valued map $F_L(t,x)=\D L(t,x,\cdot)$ satisfies the inequality  $\mathscr{H}(F_L(t,x),F_L(t,y))\leq k_R(t)\,|x-y|$ for all $t\in[0,T]\setminus\mathcal{N}_R$,  $x,y\in \B_R$ and $R>0$. This means that a set-valued map $x\to F_L(t,x)$ is $k_R(t)-$Lipschitz on the ball $\B_R$ for all  $t\in[0,T]\setminus\mathcal{N}_R$ and  $R>0$. We notice that passing from the inclusion  \eqref{hip_l3} to the inequality \eqref{n2_hd} we lose this property. Therefore, the inequality \eqref{hip_l1} is stronger than the inequality \eqref{n2_hd}.
\end{Rem}

The epi-sum of functions $\varphi_1,\varphi_2:\R^n\rightarrow\mathbb{R}\cup\{+\infty\}$ is a
function $\varphi_1\,\sharp\,\varphi_2:\R^n\rightarrow\mathbb{R}\cup\{\pm\infty\}$
given by the formula
\begin{equation*}
\varphi_1\,\sharp\,\varphi_2(v):= \inf_{u\in\R^n}\{\varphi_1(u)+\varphi_2(v-u)\}.
\end{equation*}

\begin{Lem}[\te{\cite[Thm. 11.23]{R-W}}]\label{0epi-suma-lem}
We suppose that functions $h_1,h_2:\R^n\rightarrow\R\cup\{+\infty\}$ are proper,
convex and lower semicontinuous. Let the set $\D h_2^{\ast}$ be bounded.
Then the epi-sum $h_1^{\ast}\,\sharp\, h_2^{\ast}$ is a proper, convex and lower semicontinuous function. Besides, we have the equality
\begin{equation}\label{0epi-suma-row}
(h_1+h_2)^{\ast}=h_1^{\ast}\,\sharp\, h_2^{\ast}.
\end{equation}
\end{Lem}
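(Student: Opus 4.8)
The plan is to prove that the epi-sum $g:=h_1^{\ast}\,\sharp\,h_2^{\ast}$ is proper, convex and lower semicontinuous, and then to read off \eqref{0epi-suma-row} by applying the Legendre--Fenchel transform twice. The one elementary ingredient I would record first is the purely formal identity $(\varphi_1\,\sharp\,\varphi_2)^{\ast}=\varphi_1^{\ast}+\varphi_2^{\ast}$, which holds for arbitrary $\varphi_1,\varphi_2$ and follows by unfolding the infimum inside the supremum and substituting $w=v-u$:
\begin{align*}
(\varphi_1\,\sharp\,\varphi_2)^{\ast}(p)
&=\sup_{v,u}\{\langle v,p\rangle-\varphi_1(u)-\varphi_2(v-u)\}\\
&=\sup_{u}\{\langle u,p\rangle-\varphi_1(u)\}+\sup_{w}\{\langle w,p\rangle-\varphi_2(w)\}.
\end{align*}
Taking $\varphi_i=h_i^{\ast}$ and invoking the Fenchel--Moreau theorem (each $h_i$ being proper, convex and lower semicontinuous gives $h_i^{\ast\ast}=h_i$) yields $g^{\ast}=h_1^{\ast\ast}+h_2^{\ast\ast}=h_1+h_2$. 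Consequently, once $g$ is shown to be proper, convex and lower semicontinuous, a second use of Fenchel--Moreau gives $(h_1+h_2)^{\ast}=g^{\ast\ast}=g=h_1^{\ast}\,\sharp\,h_2^{\ast}$, i.e. \eqref{0epi-suma-row}.

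It then remains to verify the three regularity properties of $g$, and it is here that the boundedness of $\D h_2^{\ast}$ is used. Convexity is immediate, since the epi-sum of convex functions is convex (the strict epigraph of $g$ is the Minkowski sum of the strict epigraphs of $h_1^{\ast}$ and $h_2^{\ast}$). For properness I would first note that $g$ is not identically $+\infty$, because $\D g=\D h_1^{\ast}+\D h_2^{\ast}\neq\emptyset$, both domains being nonempty as $h_1^{\ast},h_2^{\ast}$ are proper. To see that $g$ never equals $-\infty$, fix $p_0\in\D h_1$ and $q_0\in\D h_2$, so that Fenchel's inequality furnishes the affine minorants $h_1^{\ast}(u)\geq\langle u,p_0\rangle-h_1(p_0)$ and $h_2^{\ast}(w)\geq\langle w,q_0\rangle-h_2(q_0)$. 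In the infimum defining $g(v)$ only those $u$ with $v-u\in\D h_2^{\ast}$ contribute, and since $\D h_2^{\ast}$ is bounded such $u$ lie in the bounded set $v-\D h_2^{\ast}$; on that set both minorants are bounded below, so $g(v)>-\infty$.

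The hard part will be lower semicontinuity, which for epi-sums of lower semicontinuous functions fails in general and is salvaged here precisely by the boundedness hypothesis. I would argue sequentially: take $v_i\to v$ with $g(v_i)\to\gamma$, the case $\gamma=+\infty$ being trivial. For each relevant $i$ the map $u\mapsto h_1^{\ast}(u)+h_2^{\ast}(v_i-u)$ is proper, convex and lower semicontinuous with effective domain contained in the bounded set $v_i-\D h_2^{\ast}$, so its sublevel sets are compact and the infimum is attained, say at $u_i$, giving $g(v_i)=h_1^{\ast}(u_i)+h_2^{\ast}(v_i-u_i)$. Since $v_i-u_i\in\D h_2^{\ast}$ and $(v_i)$ is bounded, the sequence $(u_i)$ is bounded; passing to a subsequence, $u_i\to u^{\ast}$ and $v_i-u_i\to v-u^{\ast}$, and lower semicontinuity of $h_1^{\ast}$ and $h_2^{\ast}$ gives
\begin{equation*}
g(v)\leq h_1^{\ast}(u^{\ast})+h_2^{\ast}(v-u^{\ast})\leq\liminf_{i\to\infty}\big(h_1^{\ast}(u_i)+h_2^{\ast}(v_i-u_i)\big)=\gamma.
\end{equation*}
Thus $g$ is lower semicontinuous, and the duality argument of the first paragraph then completes the proof. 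The single delicate point throughout is the boundedness of $(u_i)$ (and of the admissible $u$ in the definition of $g$), which would collapse without the hypothesis that $\D h_2^{\ast}$ be bounded.
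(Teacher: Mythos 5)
Your proof is correct; note that the paper itself gives no proof of this lemma --- it is quoted directly from Rockafellar--Wets \cite[Thm. 11.23]{R-W} --- so the comparison here is between your self-contained argument and a bare citation. Your route is the natural one and every step checks out. The formal identity $(\varphi_1\,\sharp\,\varphi_2)^{\ast}=\varphi_1^{\ast}+\varphi_2^{\ast}$ is legitimate in your application because $\varphi_i=h_i^{\ast}$ are proper, so no $\infty-\infty$ ambiguity arises in the double supremum; Fenchel--Moreau then gives $g^{\ast}=h_1+h_2$ for $g:=h_1^{\ast}\,\sharp\,h_2^{\ast}$, and once $g$ is shown proper, convex and lower semicontinuous, a second conjugation yields \eqref{0epi-suma-row}. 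The hypothesis that $\D h_2^{\ast}$ is bounded enters exactly where it must: it confines the competing $u$ to the bounded set $v-\D h_2^{\ast}$, which gives the uniform affine lower bound (properness) and makes the sublevel sets of $u\mapsto h_1^{\ast}(u)+h_2^{\ast}(v_i-u)$ compact, so that the infimum is attained and the minimizers $u_i$ stay bounded; lower semicontinuity then follows by your subsequence argument. One point you leave implicit: a priori $\gamma=\lim_i g(v_i)$ could be $-\infty$, but your final chain of inequalities shows $\gamma\geq h_1^{\ast}(u^{\ast})+h_2^{\ast}(v-u^{\ast})>-\infty$, so this case is excluded automatically (equivalently, your properness bound is uniform on bounded sets, hence rules it out in advance). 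Compared with the citation, your argument has the side benefit of establishing that the epi-sum is exact, i.e.\ the infimum is attained whenever finite --- a fact the paper re-derives by hand, via the same compactness reasoning, inside the proof of Proposition \ref{0rwwl1} --- while the citation buys only brevity.
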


\begin{Prop}\label{0rwwl1}
We suppose that  $p\rightarrow H(t,x,p)$ and $p\rightarrow H(t,y,p)$ are proper, convex and lower semicontinuous. Let  $L(t,x,\cdot\,):=H^{\ast}(t,x,\cdot\,)$ and  $L(t,y,\cdot\,):=H^{\ast}(t,y,\cdot\,)$. Then the following conditions are equivalent:
\begin{enumerate}[leftmargin=2.7cm]
\item[$\pmb{\te{(a)}}$] $H(t,x,p)\leq H(t,y,p)+k_R(t)\,|p|\,|x-y|+w_R(t,|x-y|)$ for all $p\in\R^n$.
\vspace{1mm}
\item[$\pmb{\te{(b)}}$] For all\, $v\in\D L(t,x,\cdot)$\, there exists\, $u\in\D L(t,y,\cdot)$\, such that\\  $|u-v|\leq k_R(t)\,|x-y|$\, and\,  $L(t,y,u)\leq L(t,x,v)+w_R(t,|x-y|)$.
\end{enumerate}
\end{Prop}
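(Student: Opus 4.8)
The plan is to reduce both (a) and (b) to a single statement about the Lagrangians via the Legendre--Fenchel correspondence, with the epi-sum Lemma \ref{0epi-suma-lem} as the main engine. Throughout I fix $t,x,y$ and abbreviate $k:=k_R(t)$ and $w:=w_R(t,|x-y|)$, and I introduce the auxiliary function $\psi(p):=k\,|x-y|\,|p|+w$, which is finite, convex and continuous on $\R^n$. A direct computation of its Legendre--Fenchel transform gives
\[
\psi^{\ast}(v)=\sup_{p\in\R^n}\{\langle v,p\rangle-k\,|x-y|\,|p|-w\}=\begin{cases}-w, & |v|\le k\,|x-y|,\\ +\infty, & |v|> k\,|x-y|,\end{cases}
\]
so that $\D\psi^{\ast}=\B(0,k\,|x-y|)$ is a bounded closed ball (including the degenerate case $x=y$, where it reduces to $\{0\}$).

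First I would rewrite (a) as the single pointwise inequality $H(t,x,\cdot)\le H(t,y,\cdot)+\psi$. Since conjugation reverses inequalities and all functions in play are proper, convex and lower semicontinuous --- $H(t,x,\cdot)$ and $H(t,y,\cdot)$ by hypothesis, $\psi$ being finite convex, and hence $H(t,y,\cdot)+\psi$ being the sum of a proper convex lower semicontinuous function and a continuous convex one --- the Fenchel--Moreau theorem makes this inequality equivalent to
\[
L(t,x,\cdot)=H^{\ast}(t,x,\cdot)\ge\big(H(t,y,\cdot)+\psi\big)^{\ast}.
\]
Both directions are used here: the inequality $H(t,x,\cdot)\le H(t,y,\cdot)+\psi$ gives the conjugate inequality at once, while conversely applying conjugation a second time together with $H(t,x,\cdot)^{\ast\ast}=H(t,x,\cdot)$ and $\big(H(t,y,\cdot)+\psi\big)^{\ast\ast}=H(t,y,\cdot)+\psi$ recovers (a).

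The key step is to evaluate the right-hand side by Lemma \ref{0epi-suma-lem}, applied with $h_1=H(t,y,\cdot)$ and $h_2=\psi$; this is legitimate precisely because $\D h_2^{\ast}=\D\psi^{\ast}$ is bounded. It yields $\big(H(t,y,\cdot)+\psi\big)^{\ast}=L(t,y,\cdot)\,\sharp\,\psi^{\ast}$, and since $\psi^{\ast}$ equals $-w$ exactly on $\B(0,k\,|x-y|)$ and $+\infty$ elsewhere,
\[
\big(L(t,y,\cdot)\,\sharp\,\psi^{\ast}\big)(v)=\inf_{u\in\R^n}\{L(t,y,u)+\psi^{\ast}(v-u)\}=\inf_{|u-v|\le k\,|x-y|}L(t,y,u)\,-\,w.
\]
Hence (a) is equivalent to the assertion that, for every $v\in\R^n$,
\begin{equation*}
\inf_{|u-v|\le k\,|x-y|}L(t,y,u)\le L(t,x,v)+w. \tag{$\ast$}
\end{equation*}

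Finally I would match $(\ast)$ with (b). For $v\notin\D L(t,x,\cdot)$ the right-hand side of $(\ast)$ is $+\infty$, so the inequality is vacuous there, matching the fact that (b) only speaks of $v\in\D L(t,x,\cdot)$. For $v\in\D L(t,x,\cdot)$ the right-hand side is finite; since $L(t,y,\cdot)$ is proper, convex and lower semicontinuous and the constraint set $\{u:|u-v|\le k\,|x-y|\}$ is compact, the infimum in $(\ast)$ is attained at some $u$, which then satisfies $|u-v|\le k\,|x-y|$ and $L(t,y,u)\le L(t,x,v)+w<+\infty$, and in particular $u\in\D L(t,y,\cdot)$ --- this is exactly (b). The reverse implication (b)$\Rightarrow(\ast)$ is immediate, since any admissible $u$ supplied by (b) bounds the infimum. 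I expect the attainment of the infimum to be the only genuinely delicate point: it is what upgrades the infimal-convolution inequality $(\ast)$ into the existence of an honest competitor $u\in\D L(t,y,\cdot)$ demanded by (b), and it rests precisely on the compactness of $\B(0,k\,|x-y|)$ (equivalently, the boundedness of $\D\psi^{\ast}$ that Lemma \ref{0epi-suma-lem} also requires) together with the lower semicontinuity of $L(t,y,\cdot)$.
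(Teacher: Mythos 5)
Your proof is correct, and for the implication (a)$\Rightarrow$(b) it coincides with the paper's argument step for step: your $\psi$ is the paper's $h_2$, you compute the same conjugate $\psi^{\ast}$, invoke the same epi-sum Lemma~\ref{0epi-suma-lem} with the same verification that $\D\psi^{\ast}$ is bounded, and close with the same attainment argument (lower semicontinuity of $L(t,y,\cdot)$ on the compact ball $\{u:|u-v|\le k_R(t)|x-y|\}$). Where you genuinely diverge is the converse (b)$\Rightarrow$(a). The paper proves it directly from the definition of $H(t,x,\cdot)=L^{\ast}(t,x,\cdot)$: it fixes $\bar p$, disposes of the case $H(t,y,\bar p)=+\infty$, rules out $H(t,x,\bar p)=+\infty$ by a contradiction argument built on an element $\bar v$ violating a threshold $M$, and then runs an $\varepsilon$-approximation of the supremum, applying (b) to an almost-maximizing $\bar v$ and letting $\varepsilon\to 0$. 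You instead upgrade the one-directional conjugation step into an equivalence, $\te{(a)}\iff L(t,x,\cdot)\ge\bigl(H(t,y,\cdot)+\psi\bigr)^{\ast}\iff(\ast)$, by invoking Fenchel--Moreau twice --- $H^{\ast\ast}(t,x,\cdot)=H(t,x,\cdot)$ and $\bigl(H(t,y,\cdot)+\psi\bigr)^{\ast\ast}=H(t,y,\cdot)+\psi$, the latter legitimate because the sum of a proper convex lower semicontinuous function and a finite continuous convex function is again proper, convex and lower semicontinuous --- so that (b)$\Rightarrow$(a) costs nothing beyond the trivial observation that any $u$ supplied by (b) bounds the infimum in $(\ast)$. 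Your route is shorter and structurally cleaner: a single chain of equivalences, no case analysis on infinite values of $H$. The paper's route is more elementary in the sense that it never conjugates back, working only with the definition of the transform and handling the extended-real arithmetic by hand; that is also why it needs the separate contradiction step to exclude $H(t,x,\bar p)=+\infty$, which your biconjugation absorbs automatically. Both arguments rest on the same two pillars: Lemma~\ref{0epi-suma-lem} and compactness-plus-lower-semicontinuity for attainment.
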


\begin{proof}
We start with the implication  $\te{(a)}\Rightarrow\te{(b)}$. Let $h_1(p):=H(t,y,p)$, $h_2(p):=k_R(t)\,|p|\,|x-y|+w_R(t,|x-y|)$, $h_3(p):=h_1(p)+h_2(p)$, $h_4(p):=H(t,x,p)$ for
every $p\in\R^n$.
Of course, $h_1^{\ast}(v)=L(t,y,v)$ and $h_4^{\ast}(v)=L(t,x,v)$ for every $v\in\R^n$. It is not difficult to calculate the following for every $v\in\R^n$.
\begin{equation}\label{0prop-wa2}
h_2^{\ast}(v)=\left\{
\begin{array}{lll}
\:-w_R(t,|x-y|) & \te{if} & |v|\leq k_R(t)\,|x-y| \\
\:+\infty & \te{if} & |v|> k_R(t)\,|x-y|.
\end{array}
\right.
\end{equation}
we notice that functions $h_1,h_2$ satisfy assumptions of Lemma \ref{0epi-suma-lem}. Therefore, by the equality (\ref{0epi-suma-row}) we have for every  $v\in\R^n$,
\begin{equation}\label{0prop-wa1}
h_3^{\ast}(v)=(h_1+h_2)^{\ast}(v)=h_1^{\ast}\,\sharp\, h_2^{\ast}(v).
\end{equation}
By (\ref{0prop-wa2}), (\ref{0prop-wa1}) and the definition of the epi-sum we get
for every $v\in\R^n$
\begin{equation}\label{0prop-wa3}
h_3^{\ast}(v)=\inf_{u\,:\,|v-u|\leq k_R(t)|x-y|}\{\,L(t,y,u)-w_R(t,|x-y|)\,\}.
\end{equation}
The inequality $\te{(a)}$ implies $h_4(p)\leq h_3(p)$ for every $p\in\R^n$. Therefore,
by the property of the Legendre-Fenchel transform we obtain  $h_3^{\ast}(v)\leq h_4^{\ast}(v)$ for all $v\in\R^n$. Using the property (\ref{0prop-wa3}) we have for all $v\in\R^n$
\begin{equation}\label{0prop-wa4}
L(t,x,v)\geq \inf_{u\,:\,|v-u|\leq k_R(t)|x-y|}\{\,L(t,y,u)-w_R(t,|x-y|)\,\}.
\end{equation}
The function  $u\rightarrow L(t,y,u)-w_R(t,|x-y|)$ is lower semicontinuous, so it achieves its minimum
on the compact set $\{\,u\,\mid\,|v-u|\leq k_R(t)|x-y|\,\}$. Using the inequality  (\ref{0prop-wa4}), we obtain the condition $\te{(b)}$ from the proposition.

Now, we prove the implication $\te{(b)}\Rightarrow\te{(a)}$. To this end, we fix $\kr{p}\in\R^n$. If $H(t,y,\kr{p})=+\infty$, then the inequality $\te{(a)}$ holds. We suppose that  $H(t,y,\kr{p})<+\infty$. Then the value $H(t,y,\kr{p})$ is a real number, because the function $p\rightarrow H(t,y,p)$ is proper.  We suppose that $H(t,x,\kr{p})=+\infty$. Next, we set $M:=H(t,y,\kr{p})+k_R(t)\,|\kr{p}|\,|x-y|+w_R(t,|x-y|)$. Because $H(t,x,\cdot\,)=L^{\ast}(t,x,\cdot\,)$, there exists $\kr{v}\in\D L(t,x,\cdot)$
such that
\begin{equation}\label{0prop-wa5}
M<\langle \kr{p},\kr{v}\rangle-L(t,x,\kr{v}).
\end{equation}
By the condition $\te{(b)}$, there exists $\kr{u}\in\D L(t,y,\cdot)$ such that
\begin{equation}\label{0prop-wa6}
    |\kr{u}-\kr{v}|\leq k_R(t)\,|y-x| \;\;\te{and}\;\; L(t,y,\kr{u})\leq L(t,x,\kr{v})+w_R(t,|y-x|).
\end{equation}
By the inequality (\ref{0prop-wa5}) and (\ref{0prop-wa6}) we obtain
\begin{eqnarray*}
M &< & \langle \kr{p},\kr{v}\rangle-L(t,x,\kr{v})+H(t,y,\kr{p})-
\sup_{v\in\mathbb{R}^{n}}\,\{\langle \kr{p},v\rangle-L(t,y,v)\}\\
&\leq &  \langle \kr{p},\kr{v}\rangle-L(t,x,\kr{v})+ H(t,y,\kr{p})-\langle \kr{p},\kr{u}\rangle+L(t,y,\kr{u})\\
&\leq & H(t,y,\kr{p})+ |\kr{p}|\,|\kr{v}-\kr{u}|+L(t,y,\kr{u})-L(t,x,\kr{v})\\
&\leq & H(t,y,\kr{p})+k_R(t)\,|\kr{p}|\,|x-y|+w_R(t,|x-y|)\;\;=\;\;M.
\end{eqnarray*}
The above contradiction  means that $H(t,x,\kr{p})<+\infty$. Then the value $H(t,x,\kr{p})$ is a real number, because the function  $p\rightarrow H(t,x,p)$ is proper. We set $\varepsilon>0$. As $H(t,x,\cdot\,)=L^{\ast}(t,x,\cdot\,)$,  there exists $\kr{v}\in\D L(t,x,\cdot)$ such that
\begin{equation}\label{00prop-wa5}
    H(t,x,\kr{p})-\varepsilon\leq\langle \kr{p},\kr{v}\rangle-L(t,x,\kr{v}).
\end{equation}
By the condition $\te{(b)}$ there exists $\kr{u}\in\D L(t,y,\cdot)$ such that
\begin{equation}\label{00prop-wa6}
    |\kr{u}-\kr{v}|\leq k_R(t)\,|y-x| \;\;\te{and}\;\; L(t,y,\kr{u})\leq L(t,x,\kr{v})+w_R(t,|y-x|).
\end{equation}
By the inequality (\ref{00prop-wa5}) and (\ref{00prop-wa6}) we obtain
\begin{eqnarray*}
    H(t,x,\kr{p})-\varepsilon &\leq & \langle \kr{p},\kr{v}\rangle-L(t,x,\kr{v})+H(t,y,\kr{p})-
    \sup_{v\in\mathbb{R}^{n}}\,\{\langle \kr{p},v\rangle-L(t,y,v)\}\\
    &\leq &  \langle \kr{p},\kr{v}\rangle-L(t,x,\kr{v})+ H(t,y,\kr{p})-\langle \kr{p},\kr{u}\rangle+L(t,y,\kr{u})\\
    &\leq & H(t,y,\kr{p})+ |\kr{p}|\,|\kr{v}-\kr{u}|+L(t,y,\kr{u})-L(t,x,\kr{v})\\
    &\leq & H(t,y,\kr{p})+k_R(t)\,|\kr{p}|\,|x-y|+w_R(t,|x-y|).
\end{eqnarray*}
As $\varepsilon>0$ is an arbitrary number, we get $H(t,x,\kr{p}) \leq H(t,y,\kr{p})+k_R(t)\,|\kr{p}|\,|x-y|+w_R(t,|x-y|)$. Also, $\kr{p}\in\R^n$ is arbitrary, so we have the inequality $H(t,x,p) \leq H(t,y,p)+k_R(t)\,|p|\,|x-y|+w_R(t,|x-y|)$ for every $p\in\R^n$. It~ends the proof.
\end{proof}

\begin{Prop}\label{0lnrwwl1}
We suppose that  $v\rightarrow L(t,x,v)$ and $v\rightarrow L(t,y,v)$ are proper.  Then the following conditions are equivalent:
\begin{enumerate}[leftmargin=2.7cm]
\item[$\pmb{\te{(a)}}$] For all\, $v\in\D L(t,x,\cdot)$\, there exists\, $u\in\D L(t,y,\cdot)$\, such that\\  $|u-v|\leq k_R(t)\,|y-x|$\, and\,  $L(t,y,u)\leq L(t,x,v)+w_R(t,|y-x|)$.
\vspace{1mm}
\item[$\pmb{\te{(b)}}$] $E_L(t,x)\subset E_L(t,y)+(\,k_R(t)\,|x-y|\,\B\,)\times(\,w_R(t,|x-y|)\,[-1,1]\,)$.
\end{enumerate}
\end{Prop}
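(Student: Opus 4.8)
The plan is to prove the two implications separately, each being a translation between the pointwise statement (a), phrased on the effective domains, and the set inclusion (b), phrased on the full epigraphs $E_L(t,x)=\E L(t,x,\cdot)$. Throughout I would abbreviate $k:=k_R(t)\,|x-y|$ and $w:=w_R(t,|x-y|)$, so that (b) reads $E_L(t,x)\subset E_L(t,y)+(k\B)\times(w\,[-1,1])$; note $|y-x|=|x-y|$ makes the constants in (a) and (b) literally the same. I would also use that properness of $L(t,x,\cdot)$ and $L(t,y,\cdot)$ forces $L$ to be a finite real number on the respective effective domains.

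For the implication $\te{(a)}\Rightarrow\te{(b)}$, I would fix an arbitrary $(v,\eta)\in E_L(t,x)$, so that $L(t,x,v)\leq\eta<+\infty$ and hence $v\in\D L(t,x,\cdot)$. Applying (a) produces $u\in\D L(t,y,\cdot)$ with $|u-v|\leq k$ and $L(t,y,u)\leq L(t,x,v)+w$. The decisive step is to choose the second coordinate of the witness point in $E_L(t,y)$ correctly: set $\zeta:=\max\{\,L(t,y,u),\;\eta-w\,\}$. Then $\zeta\geq L(t,y,u)$ gives $(u,\zeta)\in E_L(t,y)$, while $\zeta\geq\eta-w$ gives $\eta-\zeta\leq w$; for the reverse bound $\zeta\leq\eta+w$ one distinguishes the two cases of the maximum, using $L(t,y,u)\leq L(t,x,v)+w\leq\eta+w$ in the first case and $w\geq 0$ in the second. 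Hence $|\eta-\zeta|\leq w$, and the decomposition $(v,\eta)=(u,\zeta)+(v-u,\eta-\zeta)$ exhibits the required membership in $E_L(t,y)+(k\B)\times(w\,[-1,1])$.

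For the implication $\te{(b)}\Rightarrow\te{(a)}$, I would fix $v\in\D L(t,x,\cdot)$, so that $L(t,x,v)$ is finite, and apply (b) to the boundary point $(v,L(t,x,v))\in E_L(t,x)$. This yields $(u,\zeta)\in E_L(t,y)$ together with offsets $a\in k\B$ and $b\in w\,[-1,1]$ satisfying $(v,L(t,x,v))=(u+a,\zeta+b)$. Reading off the coordinates gives $|v-u|=|a|\leq k$ and $\zeta=L(t,x,v)-b\leq L(t,x,v)+w$, whence $L(t,y,u)\leq\zeta\leq L(t,x,v)+w$; since $\zeta$ is finite and $L(t,y,\cdot)$ is proper, this also forces $u\in\D L(t,y,\cdot)$. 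This is precisely condition (a).

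The only genuine subtlety, and the step I would treat most carefully, is the bookkeeping of the $\eta$-coordinate in $\te{(a)}\Rightarrow\te{(b)}$: the choice $\zeta=\max\{L(t,y,u),\eta-w\}$ is forced exactly so that the vertical displacement stays inside $[-w,w]$ in \emph{both} directions, and the verification of $\zeta\leq\eta+w$ is where the hypothesis $L(t,x,v)\leq\eta$ is fed through the inequality coming from (a). Everything else is direct coordinate-chasing, with no need for the convexity, lower semicontinuity, or Legendre--Fenchel machinery used in Proposition \ref{0rwwl1} — only properness of the two Lagrangian slices is required.
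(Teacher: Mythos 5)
Your proof is correct and takes essentially the same route as the paper's: the $\te{(b)}\Rightarrow\te{(a)}$ direction is identical, and $\te{(a)}\Rightarrow\te{(b)}$ is the same witness-plus-offset construction on the epigraphs. The only deviation is your choice $\zeta=\max\{L(t,y,u),\,\eta-w\}$, where the paper simply takes $\mu:=\eta+w_R(t,|x-y|)$ (vertical offset $-w_R(t,|x-y|)$, i.e.\ $s=-1$), which works just as well and needs no case distinction --- so your remark that the $\max$ is ``forced'' is not quite accurate, though it does not affect the validity of your argument.
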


\begin{proof}
We start with the implication $\te{(a)}\Rightarrow\te{(b)}$. Without loss of generality
we assume that $x\not=y$. Let $(v,\eta)\in E_L(t,x)$. Then $L(t,x,v)\leq \eta$. Thus, $v\in\D L(t,x,\cdot)$, because the function $v\!\rightarrow\! L(t,x,v)$ is proper.
By the condition $\te{(a)}$, there exists $u\in\D L(t,y,\cdot)$ such that
\begin{equation*}
\te{(i)}\;\;|u-v|\leq k_R(t)\,|y-x| \;\;\te{and}\;\;\te{(ii)}\;\; L(t,y,u)\leq L(t,x,v)+w_R(t,|y-x|).
\end{equation*}
Let us define  $b\in\R^n$, $\mu\in\R$ i $s\in[-1,1]$ in the following way:
\begin{equation*}
b:=\frac{v-u}{k_R(t)\,|y-x|}\;\;\te{if}\;\;k_R(t)>0,\hspace{0.5cm}b:=0\;\;\te{if}\;\;k_R(t)=0,\hspace{0.5cm}\mu:=\eta+w_R(t,|y-x|),\hspace{0.5cm}s:=-1.
\end{equation*}
We notice that from (i) we have $b\in\B$. Besides, from (ii) we obtain $(u,\mu)\in\E L(t,y,\cdot)$,
because
\begin{equation*}
L(t,y,u)\leq L(t,x,v)+w_R(t,|y-x|)\leq\eta+w_R(t,|y-x|)=\mu.
\end{equation*}
Therefore, $(b,s)\in\B\times[-1,1]$ and $(u,\mu)\in E_L(t,y)$. Thus, we get
\begin{eqnarray*}
(v,\eta) &=& (u,\mu)+(k_R(t)\,|y-x|\,b\,,\,w_R(t,|y-x|)\,s)\\
&\in & E_L(t,y)+(\,k_R(t)\,|x-y|\,\B\,)\times(\,w_R(t,|x-y|)\,[-1,1]\,).
\end{eqnarray*}
Thus, the condition  $\te{(b)}$ of the proposition is proven.

Now, we work with the implication $\te{(b)}\Rightarrow\te{(a)}$. Let $v\in\D L(t,x,\cdot)$. Then
$(v,L(t,x,v))\in E_L(t,x)$. Therefore, by the condition  $\te{(b)}$ we obtain
$$(v,L(t,x,v))\in  E_L(t,y)+(\,k_R(t)\,|x-y|\,\B\,)\times(\,w_R(t,|x-y|)\,[-1,1]\,).$$
So, there exists $(u,\mu)\in E_L(t,y)$ and $(b,s)\in\B\times[-1,1]$ such that
\begin{eqnarray}\label{00arfr-1}
(v,L(t,x,v))=(u,\mu)+(k_R(t)\,|y-x|\,b\,,\,w_R(t,|y-x|)\,s).
\end{eqnarray}
$(u,\mu)\in E_L(t,y)$, so $L(t,y,u)\leq \mu$. Hence  $u\in\D L(t,y,\cdot)$, because
the function $v\rightarrow L(t,y,v)$ is proper. By the equality (\ref{00arfr-1}) we have  $|u-v|= k_R(t)\,|y-x|\,|b|\leq k_R(t)\,|y-x|$ and
\begin{eqnarray*}
L(t,y,u)\;\;\leq\;\;\mu &=& L(t,x,v)+w_R(t,|y-x|)(-s)\\
&\leq & L(t,x,v)+w_R(t,|y-x|).
\end{eqnarray*}
 Thus, we have proven that for every $v\in\D L(t,x,\cdot)$ there exists  $u\in\E L(t,y,\cdot)$ such that $|u-v|\leq k_R(t)\,|y-x|$ and  $L(t,y,u)\leq L(t,x,v)+w_R(t,|y-x|)$.
It ends the proof of the proposition.
\end{proof}

\subsection{Examples of Hamiltonians satisfying {\te{\pmb{(}\bf{H1}\pmb{)}$\pmb{-}$\pmb{(}\bf{H4}\pmb{)}}} and \pmb{(}\bf{HLC}\pmb{)}}\label{przy-podroz} \label{przyklady-sp}
In this subsection we present a few examples where the assumptions of our theorems about representation we shall state in the next section are satisfied. These examples have nonregular Lagrangians, so they do not fulfill conditions of theorems contained in  \cite{F-S,FR}. Basically, they show how general our results are comparing to those in
 \cite{F-S,FR}.

\begin{Ex}\label{ex-1}
Let us define the Hamiltonian $H:\R\times\R\rightarrow\R$ by the formula
\begin{equation*}
H(x,p):=\max\{\,|p|\,|x|-1,0\,\}.
\end{equation*}
This Hamiltonian satisfies conditions  (H1)$-$(H4) and \eqref{hip_l1}. Conditions (H4)
and \eqref{hip_l1} follow easily from the equality  $H(x,p)=(|px|-1+|1-|px||)/2$, that
is satisfied for all $x,p\in\R$. The Lagrangian  $L:\R\times\R\rightarrow\R\cup\{+\infty\}$
given by the formula (\ref{tran1}) has the form
\begin{equation*}
L(x,v)=\left\{
\begin{array}{ccl}
+\infty & \te{if} & v\not\in[-|x|,|x|\,],\;x\not=0,\\[1mm]
\left|\frac{\displaystyle v}{\displaystyle x}\right| & \te{if} & v\in[-|x|,|x|\,],\;x\not=0, \\[1mm]
0 & \te{if} & v=0,\; x=0,\\[0mm]
+\infty & \te{if} & v\not=0,\;x=0.
\end{array}
\right.
\end{equation*}
Obviously,  $\D L(x,\cdot)=[-|x|,|x|\,]$ for all $x\in\R$. Besides, the function  $(x,v)\rightarrow L(x,v)$ does not satisfy the assumption (H5) of~\cite{F-S,FR}. Indeed, it is not continuous on the set $\D L$, because
\begin{equation*}
\lim_{i\rightarrow\infty}L\left(1/i,1/i\right)=1\not= 0=L(0,0).
\end{equation*}
\end{Ex}

\begin{Ex}[Rampazzo]\label{ex-2}
Let us define the Hamiltonian $H:\R\times\R\rightarrow\R$ by the formula
\begin{equation*}
H(x,p):=\sqrt{1+p^2}-|x|.
\end{equation*}
This Hamiltonian satisfies assumptions (H1)$-$(H4) and \eqref{hip_l1}. The Lagrangian  $L:\R\times\R\rightarrow\R\cup\{+\infty\}$ given by the formula (\ref{tran1})
has the following form
\begin{equation*}
L(x,v)=\left\{
\begin{array}{ccl}
-\sqrt{1-v^2}+|x| & \te{if} & v\in[-1,1],\\[1mm]
+\infty & \te{if} & v\not\in[-1,1].
\end{array}
\right.
\end{equation*}
Obviously, $\D L(x,\cdot)=[-1,1]$ for all $x\in\R$. We notice that the function $(x,v)\rightarrow L(x,v)$ is continuous on the set $\D L$, but it does not fulfill the condition (H5) that can be found in \cite{F-S,FR}.
\end{Ex}

\begin{Ex}\label{ex-3}
Let us define the Hamiltonian $H:\R\times\R\rightarrow\R$ by the formula
\begin{equation*}
H(x,p):=\left\{
\begin{array}{lcl}
p-1-|x| & \te{if} & p\geq -1, \\[1mm]
-2\sqrt{-p}-|x| & \te{if} & p<-1.
\end{array}
\right.
\end{equation*}
This Hamiltonian satisfies assumptions (H1)$-$(H4) and \eqref{hip_l1}. The Lagrangian  $L:\R\times\R\rightarrow\R\cup\{+\infty\}$ given by the formula (\ref{tran1})
has the following form
\begin{equation*}
L(x,v)=\left\{
\begin{array}{lcl}
\frac{\displaystyle 1}{\displaystyle v}+|x| & \te{if} & v\in(0,1], \\[1mm]
+\infty & \te{if} & v\not\in(0,1].
\end{array}
\right.
\end{equation*}
The set  $\D L(x,\cdot)=(0,1]$ is neither closed nor open and the function $v\rightarrow L(x,v)$ on this set is not bounded for every  $x\in\R$.
It means that the Lagrangian does not satisfy conditions of~\cite{F-S,FR}.
\end{Ex}

\begin{Ex}\label{ex-4}
Let us define the Hamiltonian $H:\R\times\R\rightarrow\R$ by the formula
\begin{equation*}
H(x,p):=\left\{
\begin{array}{ccl}
(\sqrt{|xp|}-1)^2 & \te{if} & |xp|> 1, \\[1mm]
0 & \te{if} & |xp|\leq 1.
\end{array}
\right.
\end{equation*}
This Hamiltonian satisfies assumptions (H1)$-$(H4) and \eqref{hip_l1}. The Lagrangian  $L:\R\times\R\rightarrow\R\cup\{+\infty\}$ given by the formula (\ref{tran1})
has the following form
\begin{equation*}
L(x,v)=\left\{
\begin{array}{ccl}
+\infty & \te{if} & v\not\in(-|x|,|x|\,),\;x\not=0,\\[1mm]
\frac{\displaystyle |v|}{\displaystyle |x|-|v|} & \te{if} & v\in(-|x|,|x|\,),\;x\not=0, \\[2mm]
0 & \te{if} & v=0,\; x=0,\\[0mm]
+\infty & \te{if} & v\not=0,\;x=0.
\end{array}
\right.
\end{equation*}
The set $\D L(x,\cdot)=(-|x|,|x|\,)$  is not closed and the function $v\rightarrow L(x,v)$ is not bounded on this set for every $x\in\R\setminus\{0\}$. Besides, the function  $(x,v)\rightarrow L(x,v)$  is not continuous on the set $\D L$. It means that the Lagrangian does not satisfy
conditions of~\cite{F-S,FR}.
\end{Ex}

\begin{Ex}\label{ex-5}
Let us define the Hamiltonian $H:[0,T]\times\R\times\R\rightarrow\R$ by the formula
\begin{equation*}
H(t,x,p):=\frac{p^2}{2+2t}-|x|.
\end{equation*}
This Hamiltonian satisfies assumptions (H1)$-$(H3) and \eqref{hip_l1}. We notice that
it does not satisfy the condition (H4). Nevertheless, it fulfills weaker (than in the current paper) conditions of existence and uniqueness that were considered in
\cite{G,AM2,P-Q}. The Lagrangian  $L:[0,T]\times\R\times\R\rightarrow\R\cup\{+\infty\}$ given by the formula (\ref{tran1}) has the following form
\begin{equation*}
L(t,x,v)=(1+t)v^2+|x|.
\end{equation*}
Obviously, $\D L(t,x,\cdot)=\R$ for every $t\in[0,T]$, $x\in\R$. Besides, the function
$v\rightarrow L(t,x,v)$ is not bounded on $\D L(t,x,\cdot)$ for every $t\in[0,T]$, $x\in\R$. It means that Lagrangian does not satisfy
conditions of~\cite{F-S,FR}. Additionally, a set-valued map $t\to E_L(t,x)$
is not continuous in the sense of Hausdorff's distance.
\end{Ex}

%\vspace*{-3mm}
%\pagebreak

%%%%%%%%%%%%%%%%%%%%%%%%%%%%%%%%%%%%%%%%%%%%%%%%%%%%%%%%%%%%%%%%%%%%%%%%%%%%%%
%%%%%%%%%%%%%%%%%%%%%%%%%%%%%%%%%%%%%%%%%%%%%%%%%%%%%%%%%%%%%%%%%%%%%%%%%%%%%%
%%%%%%%%%%%%%%%%%%%%%%%%%%%%%%%%%%%%%%%%%%%%%%%%%%%%%%%%%%%%%%%%%%%%%%%%%%%%%%
%%%%%%%%%%%%%%%%%%%%%%%%%%%%%%%%%%%%%%%%%%%%%%%%%%%%%%%%%%%%%%%%%%%%%%%%%%%%%%page 10

\section{Main Results}\label{section-3}
\noindent
In this section we describe main results of the paper that concern  faithful representations of Hamiltonians satisfying the existence and uniqueness conditions. We start with proving that representations are not determined uniquely. In addition to this, they can be totally irregular.

We consider the  Hamiltonian $H:\R\times\R\rightarrow\R$ given by the formula  $H(x,p):=|p|$. We notice that the triple $([-1,1],f,l)$ is a representation of this  Hamiltonian if functions $f,l:\R\times [-1,1]\rightarrow\R$ satisfy the following conditions:
\begin{equation}\label{wardlap}
|f(x,a)|\leq 1,\; f(x,1)=1,\;f(x,-1)=-1\;\;\;\te{and}\;\;\; l(x,a)\geq 0,\; l(x,1)=l(x,-1)=0.
\end{equation}
Let $h(\cdot)$ and $k(\cdot)$ be arbitrary functions on $\R$ with values in  $[0,\infty)$.
Then
\begin{equation}\label{pfcn}
f_h(x,a):=a\,(1+|a|\,h(x))/(1+h(x)),\quad l_k(x,a):=(1-|a|)\,k(x),\quad x\in\R,\,a\in[-1,1]
\end{equation}
satisfy conditions \eqref{wardlap}. Therefore, every triple $([-1,1],f_h,l_k)$, where $f_h,l_k$ are given by \eqref{pfcn}, is a representation of the Hamiltonian $H(x,p)=|p|$. There exist also representations with nonmeasurable (with respect to the state variable) functions $f_h,l_k$, for instance if $h(\cdot)$ and $k(\cdot)$ are not measurable.
However, our results show that from  the set of representations one can always choose a faithful representation.

\subsection{Representation $\pmb{(A,f,l)}$  with noncompact set $\pmb{A}$} This subsection is devoted to the new representation theorem for convex Hamiltonians, with noncompact control sets.

\begin{Th}[\bf{Representation}]\label{th-rprez-glo}
We suppose that  $H$ satisfies  \te{(H1)$-$(H3)} and \eqref{hip_l1}. Then there exists  $f:[0,T]\times\R^n\times A\rightarrow\R^n$ and $l:[0,T]\times\R^n\times A\rightarrow\R$, measurable in $t$ for all $(x,a)\in\R^n\times A$ and continuous in $(x,a)$ for all $t\in[0,T]$, with the control set $A:=\R^{n+1}$, such that for every $t\in[0,T]$, $x,p\in\R^n$
\begin{equation*}
 H(t,x,p)=\sup_{\;\;a\in \R^{n+1}}\!\!\{\,\langle\, p,f(t,x,a)\,\rangle-l(t,x,a)\,\}
\end{equation*}
and $f(t,x,A)=\D H^{\ast}(t,x,\cdot)$. Moreover\, $l(t,x,a)\geq -|H(t,x,0)|$\, for all\,  $t\in[0,T]$, $x\in \R^n$, $a\in R^{n+1}$.

\noindent Furthermore, for all $R>0$, $t\in[0,T]\setminus\mathcal{N}_R$,\, $x\in \B_R$,\, $a,b\in R^{n+1}$
\begin{equation*}\label{th-rprez-glo-wl}
\left\{\begin{array}{l}
|f(t,x,a)-f(t,y,b)|\leq 10(n+1)[\,k_R(t)|x-y|+w_R(t,|x-y|)+|a-b|\,]\\[1mm]
|l(t,x,a)-l(t,y,b)|\leq 10(n+1)[\,k_R(t)|x-y|+w_R(t,|x-y|)+|a-b|\,].
\end{array}\right.
\end{equation*}

\noindent Next, if \te{(H4)} is verified, then  for any $t\in[0,T]$, $x\in\R^n$, $a\in\R^{n+1}$
\begin{equation}\label{th-rprez-glo-ww}
|f(t,x,a)|\leq c(t)(1+|x|).
\end{equation}

\noindent Furthermore, if $H$ is continuous, so are $f,l$.
\end{Th}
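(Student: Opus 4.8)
The plan is to transfer the whole problem to the epigraphical set-valued map $E_L$ and to read off $f$ and $l$ from a single parametrization of it. First I would pass to the Lagrangian $L(t,x,\cdot)=H^\ast(t,x,\cdot)$ and record, via Proposition \ref{prop2-fmw} and Corollary \ref{wrow-wm}, that $E_L(t,x)=\E L(t,x,\cdot)$ has nonempty, closed, convex values, is measurable in $t$, lower semicontinuous (and jointly lower semicontinuous with closed graph when $H$ is continuous) in $x$, and---crucially---satisfies the Hausdorff estimate \eqref{n2_hd} of Corollary \ref{hlc-cor-ner}. Since \te{(H1)$-$(H3)} force $H(t,x,\cdot)$ to be finite and convex, hence continuous, the Fenchel--Moreau theorem gives $H(t,x,\cdot)=L^\ast(t,x,\cdot)$, which I will use at the very end.

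Next I would apply the parametrization theorem \cite[Thm.\,9.7.1]{A-F} to $E_L$ with control set $A:=\R^{n+1}$. This produces a single-valued $e:[0,T]\times\R^n\times A\to\R^n\times\R$, measurable in $t$, continuous in $(x,a)$ (jointly continuous when $H$ is continuous, by (M6)), with $e(t,x,A)=E_L(t,x)$ and Lipschitz estimates inherited from \eqref{n2_hd}. I then set $f:=\pi_v\circ e$ and $l:=\pi_\eta\circ e$ as in \eqref{def-fl}. The representation identity is then immediate: since $(f,l)(t,x,A)=E_L(t,x)=\{(v,\eta):L(t,x,v)\le\eta\}$, minimizing over $\eta$ first yields
\[
\sup_{a\in A}\{\langle p,f(t,x,a)\rangle-l(t,x,a)\}=\sup_{(v,\eta)\in E_L(t,x)}\{\langle p,v\rangle-\eta\}=\sup_{v\in\R^n}\{\langle p,v\rangle-L(t,x,v)\}=H(t,x,p).
\]
The effective-domain identity $f(t,x,A)=\D L(t,x,\cdot)=\D H^\ast(t,x,\cdot)$ follows by projecting $e(t,x,A)=\E L(t,x,\cdot)$ onto the $v$-coordinate. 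For the lower bound on $l$, I would use $(f,l)(t,x,a)\in E_L(t,x)$, so that $l(t,x,a)\ge L(t,x,f(t,x,a))\ge\langle f(t,x,a),0\rangle-H(t,x,0)=-H(t,x,0)\ge-|H(t,x,0)|$, the middle step being the defining Fenchel inequality evaluated at $p=0$. Under \te{(H4)}, the bound on $f$ is automatic from $f(t,x,a)\in\D L(t,x,\cdot)$ together with (M5): $|f(t,x,a)|\le\|\D L(t,x,\cdot)\|\le c(t)(1+|x|)$.

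The main obstacle is the second step: producing the explicit, dimension-controlled estimate with constant $10(n+1)$ and joint $x$- and $a$-dependence,
\[
|f(t,x,a)-f(t,y,b)|\le 10(n+1)\big[k_R(t)|x-y|+w_R(t,|x-y|)+|a-b|\big],
\]
and likewise for $l$. Two difficulties arise here. First, $E_L$ is unbounded, so one must use the closed-convex-valued version of \cite[Thm.\,9.7.1]{A-F} and verify that the Lipschitz-in-$a$ constant remains bounded uniformly in $(t,x)$ while the $x$-increment is governed by the Hausdorff modulus $2k_R(t)|x-y|+2w_R(t,|x-y|)$ of \eqref{n2_hd}. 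Second, that modulus is not Lipschitz---it carries the general modulus $w_R$---so the standard Lipschitz statement must be replaced by a modulus-of-continuity version, and the dimension factor produced by the coordinatewise step of the construction must be tracked explicitly to pin down $10(n+1)$ rather than quoted as a black box. I expect to have to open up the proof of \cite[Thm.\,9.7.1]{A-F} to carry this out; the measurability in $t$, continuity in $(x,a)$, and joint continuity when $H$ is continuous then all transfer to $e$, hence to $f$ and $l$, from the properties (M2)$-$(M6) of $E_L$ through the same construction.
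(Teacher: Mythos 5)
Your architecture coincides with the paper's own: pass to the Lagrangian, parametrize the epigraphical map $E_L$, and read off $f=\pi_v\circ e$, $l=\pi_\eta\circ e$. The parts you carry out in full are correct and essentially identical to the paper's Proposition \ref{prop-reprezentacja H} and the Remark following Theorem \ref{do-th-parame-glo1}: the representation identity obtained by minimizing out $\eta$ over the epigraph, the identity $f(t,x,A)=\D L(t,x,\cdot)$ by projection, the bound $l\geq-|H(t,x,0)|$ from the Fenchel inequality at $p=0$, and the bound $|f(t,x,a)|\leq c(t)(1+|x|)$ from (M5).

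The gap is that the estimate with constant $10(n+1)$ --- the actual content of the theorem, and the reason a black-box citation of \cite[Thm.\ 9.7.1]{A-F} does not suffice --- is exactly the step you leave as a plan (``I expect to have to open up the proof''). The paper executes that plan as its Theorem \ref{th-oparam}, and the missing construction is concrete: with $M\equiv 1$, set $\Phi(t,x,a):=E_L(t,x)\cap\B\bigl(a,2d(a,E_L(t,x))\bigr)$ and $e(t,x,a):=s_{n+1}(\Phi(t,x,a))$, where $s_{n+1}$ is the Steiner selection. Lemma \ref{lem-pmh} states that the truncation map $(y,K)\mapsto K\cap\B(y,2d(y,K))$ is $5$-Lipschitz jointly in the point and in the set (Hausdorff distance), and Lemma \ref{lem-scmh} states that $s_{n+1}$ is $(n+1)$-Lipschitz in the Hausdorff metric; composing gives $|e(t,x,a)-e(t,y,b)|\leq 5(n+1)[\,\mathscr{H}(E_L(t,x),E_L(t,y))+|a-b|\,]$, and Corollary \ref{hlc-cor-ner} (namely $\mathscr{H}\leq 2k_R(t)|x-y|+2w_R(t,|x-y|)$) turns this into the stated $10(n+1)$ bound. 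Note that this construction dissolves both difficulties you flag rather than requiring extra care: $\Phi$ has compact values even though $E_L$ is unbounded, so the Steiner point is well defined, and the estimate is expressed purely through $\mathscr{H}(E_L(t,x),E_L(t,y))$, so a non-Lipschitz modulus $w_R$ passes through untouched. Surjectivity $e(t,x,\R^{n+1})=E_L(t,x)$ follows because $\Phi(t,x,z)=\{z\}$ for $z\in E_L(t,x)$; $t$-measurability comes from the integral formula defining the Steiner point; and the continuity of $e(t,\cdot,\cdot)$ requires a genuine argument (carried out in the paper) that $\Phi(t,\cdot,\cdot)$ is $\mathscr{H}$-continuous --- its lower semicontinuity does not transfer formally from (M2)--(M6) but needs the interior-point/segment argument. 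Until this block is actually supplied, your proof is incomplete at its core, even though everything you do prove is right and the route is the paper's own.
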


The proof of Theorem \ref{th-rprez-glo} is contained in Section \ref{pofrepth}. On the basis of the following example we show a simplified version of construction of faithful representation from the above theorem.

\begin{Ex}
Let $H$ and $L$ be as in Example \ref{ex-3}. We know that $H$ satisfies assumptions  (H1)$-$(H4) and \eqref{hip_l1}.  We notice that the set $K:=\E L(0,\cdot)$ is nonempty,
closed and convex, and $K+(0,|x|)=\E L(x,\cdot)$.  Let $P_K:\R^2\to K$ be a projection of the space $\R^2$ to the set  $K$. Obviously, this function fulfills the Lipschitz continuity. We define functions $f,l:\R\times\R^2\to\R$ by the formulas:
\begin{equation*}
f(x,a):=\pi_1\circ P_K(a), \qquad l(x,a):=\pi_2\circ P_K(a)+|x|, \qquad \forall\;(x,a)\in\R\times\R^{2},
\end{equation*}
where $\pi_1(y_1,y_2)=y_1$ and $\pi_2(y_1,y_2)=y_2$ for every $y_1,y_2\in\R$. We show that
the triple $(\R^2,f,l)$ is a representation of the Hamiltonian $H$. Let $e(x,a):=(f(x,a),l(x,a))$ for every $x\in\R$, $a\in\R^2$. Then
$$e(x,a)=(\pi_1\circ P_K(a)\,,\,\pi_1\circ P_K(a))+(0,|x|)=P_K(a)+(0,|x|)$$
for every $x\in\R$, $a\in\R^2$. Therefore,  $e(x,\R^2)=P_K(\R^2)+(0,|x|)=K+(0,|x|)$.  Thus, $e(x,\R^2)=\E L(x,\cdot)=E_L(x)$. It means that the Hamiltonian $H$ and the triple $(\R^2,f,l)$ satisfy assumptions of Proposition \ref{prop-reprezentacja H}, so the triple $(\R^2,f,l)$ has to be a representation of the Hamiltonian $H$.
\end{Ex}

\subsection{Necessary condition of existence of representation $\pmb{(A,f,l)}$ with  compact set $\pmb{A}$}
We start this subsection introducing the condition for an upper bound of the Lagrangian on the effective domain.
\begin{equation}\label{wol1}
\begin{array}{l}
\te{There exists}\;\, \lambda:[0,T]\times\R^n\rightarrow\R\;\, \te{measurable in}\;\, t \;\,\te{for all}\;\, x\in\R^n\;\,\te{and continuous in}\;\,x\;\,\te{for}\\\te{all}\;\,t\in[0,T]\;\, \te{such that}
\;\, L(t,x,v)\leq\lambda(t,x)\;\; \te{for every}\;\;t\in[0,T],\;x\in\R^n,\;v\in\D L(t,x,\cdot),\\
\te{furthermore for any}\; \,R>0\, \;\te{there exists}\;\, w_R(\cdot,\cdot)-\te{modulus}\;\,\te{and}\;\,\mathcal{N}_R-\te{null set such that}\\ |\lambda(t,x)-\lambda(t,y)|\leq w_R(t,|x-y|)\;\,\te{for every}\;\,t\in[0,T]\setminus \mathcal{N}_R\;\,\te{and}\;\,x,y\in\B_R.
\end{array}
\tag{BLC}
\end{equation}

\begin{Th}\label{podr22_th_wk}
Let $A$ be a nonempty compact set. We suppose that  $f:[0,T]\times\R^n\times A\rightarrow\R^n$ and  $l:[0,T]\times\R^n\times A\rightarrow\R$  are  $t-$measurable for all $(x,a)\in\R^n\times\R^m$ and  $(x,a)-$continuous for all $t\in[0,T]$. Further on, we assume that for every $R>0$ there exists the modulus  $w_R(\cdot,\cdot)$ and a null set  $\mathcal{N}_R$ such that $|l(t,x,a)-l(t,y,a)|\leq w_R(t,|x-y|)$ for every  $t\in[0,T]\setminus\mathcal{N}_R$, $x\in\B_R$, $a\in A$. If the Hamiltonian $H$ is given by the formula \eqref{hfl}, then the Lagrangian $L$ given by  \te{(\ref{tran1})} satisfies the condition \eqref{wol1} with the same modulus $w_R(\cdot,\cdot)$. Moreover, if $\,f,l\,$ are continuous, then $\,\lambda\,$ is also continuous.
\end{Th}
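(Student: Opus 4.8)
The plan is to take as upper bound the pointwise supremum of $l$ over the control set,
\[\lambda(t,x):=\sup_{a\in A}l(t,x,a),\]
which is finite and attained at some $a\in A$ for every $(t,x)$, since $A$ is compact and $l(t,\cdot,\cdot)$ is continuous. The proof then divides into establishing the bound $L(t,x,v)\leq\lambda(t,x)$ on $\D L(t,x,\cdot)$ and verifying the regularity of $\lambda$ required by \eqref{wol1}.

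For the bound I would first record that $L(t,x,f(t,x,a))\leq l(t,x,a)$ for every $a\in A$. Indeed, \eqref{hfl} gives $H(t,x,p)\geq\langle p,f(t,x,a)\rangle-l(t,x,a)$ for all $p$, so evaluating $L(t,x,\cdot)=H^{\ast}(t,x,\cdot)$ at $v=f(t,x,a)$ the linear terms cancel and the supremum over $p$ leaves only $l(t,x,a)$. In particular $f(t,x,A)\subseteq\D L(t,x,\cdot)$, and since $L(t,x,\cdot)$ is convex the inequality propagates by Jensen to the convex hull: for $v=\sum_i\theta_i f(t,x,a_i)$ one gets $L(t,x,v)\leq\sum_i\theta_i l(t,x,a_i)\leq\lambda(t,x)$, so $L(t,x,v)\leq\lambda(t,x)$ on $\te{co}\,f(t,x,A)$.

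It then remains to show that the effective domain is contained in this convex hull, which is where compactness of $A$ is decisive. Since $f(t,x,A)$ is compact, $K:=\te{co}\,f(t,x,A)$ is compact, hence closed, by Carath\'eodory's theorem. If $v\notin K$, separation furnishes a unit vector $e$ with $\langle e,v\rangle-\sigma_K(e)=d(v,K)>0$, where $\sigma_K$ denotes the support function of $K$. Using $H(t,x,\tau e)\leq\tau\sigma_K(e)-\min_{a\in A}l(t,x,a)$ and the positive homogeneity of $\sigma_K$, one obtains $L(t,x,v)\geq\langle v,\tau e\rangle-H(t,x,\tau e)\geq\tau\,d(v,K)+\min_{a\in A}l(t,x,a)\to+\infty$ as $\tau\to+\infty$, so $v\notin\D L(t,x,\cdot)$. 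Hence $\D L(t,x,\cdot)=K$ and the bound holds on the whole effective domain.

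Finally I would check the regularity of $\lambda$. Measurability of $t\mapsto\lambda(t,x)$ follows by replacing the supremum over $A$ by one over a countable dense subset $D\subseteq A$, legitimate because $l(t,x,\cdot)$ is continuous, which makes $\lambda(\cdot,x)$ a countable supremum of $t$-measurable functions. Continuity of $x\mapsto\lambda(t,x)$, and joint continuity of $(t,x)\mapsto\lambda(t,x)$ when $f,l$ are continuous, follows from the maximum theorem applied to the continuous $l$ over the compact $A$. The modulus estimate is direct: picking $a^{\ast}$ attaining $\lambda(t,x)$, for $t\in[0,T]\setminus\mathcal{N}_R$ and $x,y\in\B_R$ we have $\lambda(t,x)=l(t,x,a^{\ast})\leq l(t,y,a^{\ast})+w_R(t,|x-y|)\leq\lambda(t,y)+w_R(t,|x-y|)$, and symmetrically, giving $|\lambda(t,x)-\lambda(t,y)|\leq w_R(t,|x-y|)$ with the same modulus. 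I expect the main obstacle to be the domain identification $\D L(t,x,\cdot)=\te{co}\,f(t,x,A)$ via the separation argument; the remaining steps are routine once the compactness of $A$ is used.
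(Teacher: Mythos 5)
Your proof is correct, and it even produces the same bound as the paper: since a convex combination of values of $l(t,x,\cdot)$ never exceeds their maximum, your $\lambda(t,x)=\sup_{a\in A}l(t,x,a)$ coincides with the paper's $\lambda$, defined there as the supremum of a convexified Lagrangian over an enlarged control set. The difference is the decomposition. The paper first builds an auxiliary representation (Lemma~\ref{lem-tran-r1w1}): it enlarges the control set to $A^{n+1}\times\Delta$, takes convex combinations of $f$ and $l$, and checks that the convexified triple is still a representation of $H$ with the same modulus; then, because the new velocity images are convex, Lemma~\ref{lem-tran-r2} identifies them with $\D L(t,x,\cdot)$, and Lemma~\ref{lem-tran-r1} (proved by epigraph separation and contradiction) gives $L(t,x,f(t,x,a))\leq l(t,x,a)$. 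You obtain the same three ingredients without the auxiliary triple: the inequality $L(t,x,f(t,x,a))\leq l(t,x,a)$ by a one-line conjugacy computation, its propagation to $\te{conv}f(t,x,A)$ by Jensen's inequality, and the inclusion $\D L(t,x,\cdot)\subset\te{conv}f(t,x,A)$ by the projection-based support-function estimate with $\tau\to+\infty$ --- which is essentially the scaling argument ($\kr{q}=-\kr{n}\,\kr{p}$, $\kr{n}\to\infty$) inside the paper's proof of Lemma~\ref{lem-tran-r2}, rephrased via $\sigma_K$ and the fact that $\sigma_{f(t,x,A)}=\sigma_{\te{conv}f(t,x,A)}$. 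The closing steps (maximizer argument for the modulus estimate, countable dense subset for $t$-measurability, maximum theorem for continuity) match the paper's. What the paper's longer route buys is a set of standalone, reusable lemmas about representations with compact control sets; what yours buys is brevity and a more elementary derivation of the conjugacy inequality.
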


The proof of Theorem  \ref{podr22_th_wk} is contained in Section \ref{wk-kon-istr}.
It follows from this Theorem that the condition \eqref{wol1} is necessary condition
for  existence of a continuous and $t-$measurable faithful representation $(A,f,l)$
with the compact control set $A$. We explain  the above theorem on the basis of the following example.

\begin{Ex}\label{ex-6}
We define the Hamiltonian $H:[0,T]\times\R\times\R\rightarrow\R$ by the formula:
\begin{equation*}\label{ex-rh2}
H(t,x,p):=\left\{
\begin{array}{ccl}
|x|\,\max\left\{\,|p|-|\ln t|,0\,\right\} & \te{if} & t\in(0,T], \\[1mm]
0 & \te{if} & t=0.
\end{array}
\right.
\end{equation*}
This Hamiltonian is continuous and satisfies assumptions (H1)$-$(H4) and \eqref{hip_l1}.
From Theorem \ref{th-rprez-glo} there exists a continuous faithful representation $(A,f,l)$ of this Hamiltonian with the control set $A:=\R^2$.\\  The Lagrangian  $L:[0,T]\times\R\times\R\rightarrow\R\cup\{+\infty\}$ given by (\ref{tran1}) has the form
\begin{equation*}\label{ex-rl2}
L(t,x,v)=\left\{
\begin{array}{ccl}
+\infty & \te{if} & v\not\in[-|x|,|x|\,],\;t\in(0,T],\\[1mm]
|\ln t|\,|v| & \te{if} & v\in[-|x|,|x|\,],\;t\in(0,T], \\[1mm]
0 & \te{if} & v=0,\; t=0,\\[1mm]
+\infty & \te{if} & v\not=0,\;t=0.
\end{array}
\right.
\end{equation*}
Obviously, $\D L(t,x,\cdot)=[-|x|,|x|\,]$ for $(t,x)\in(0,T]\times\R$ and $\D L(t,x,\cdot)=0$ for $t=0$, $x\in\R$. The function $t\rightarrow L(t,1,1)=|\ln t|$ on  $(0,T]$ cannot be upper bounded by a continuous function $\lambda(\cdot,\cdot)$ on $[0,T]\times\R$.\linebreak Therefore, by Theorem \ref{podr22_th_wk} we have that for
this Hamiltonian there does not exist a continuous faithful representation $(A,f,l)$,
with the compact control set $A$. We notice that the function $v\rightarrow L(t,x,v)$ is
upper bounded by the function  $\lambda(t,x)=k(t)|x|$  on the set $\D L(t,x,\cdot)$ for
every $(t,x)\in[0,T]\times\R$, where  $k(t):=|\ln t|$ for  $t\in(0,T]$ and $k(t):=0$ for
 $t=0$. Therefore,  $L$ satisfies the condition  \eqref{wol1}. By
Theorem~\ref{th-rprez-glo12} there exists a $t-$measurable faithful representation $(A,f,l)$ of this Hamiltonian with the compact control set $A$. Moreover, a set-valued map $t\to\D L(t,x,\cdot)$ is not continuous in the Hausdorff's sense for all $x\not=0$. Thus,  the set-valued map $t\to E_L(t,x)$ is also discontinuous in the Hausdorff's sense for all $x\not=0$.
\end{Ex}

\begin{Rem}
By Theorem \ref{podr22_th_wk} we have for Hamiltonians from Examples \ref{ex-3}, \ref{ex-4}, \ref{ex-5} that there exist neither continuous nor $t-$measurable faithful representations $(A,\!f\!,l)$ with the compact control set $A$, because functions $v\rightarrow L(t,x,v)$ of these examples are not upper bounded on the effective domain.
Obviously, by Theorem~\ref{th-rprez-glo} there exist continuous faithful
representations  $(A,\!f\!,l)$ of these Hamiltonians  with~the noncompact control set $A$.
\end{Rem}

\subsection{Sufficient condition of existence of representation $\pmb{(A,f,l)}$
with compact set $\pmb{A}$} This subsection is devoted to the new representation theorem of convex Hamiltonians, with the compact control sets.

\begin{Th}[\bf{Representation}]\label{th-rprez-glo12}
We suppose that  $H$ satisfies  \te{(H1)$-$(H4)} and \eqref{hip_l1}. Let $L$ be given by \eqref{tran1} and satisfy \eqref{wol1}. Then there exist  $f:[0,T]\times\R^n\times A\rightarrow\R^n$ and $l:[0,T]\times\R^n\times A\rightarrow\R$, measurable in $t$ for all $(x,a)\in\R^n\times A$ and continuous in $(x,a)$ for all $t\in[0,T]$, with the control set $A:=\B\subset\R^{n+1}$, such that for every $t\in[0,T]$, $x,p\in\R^n$
\begin{equation*}
 H(t,x,p)=\sup_{\;\;a\in \B}\,\{\,\langle\, p,f(t,x,a)\,\rangle-l(t,x,a)\,\}
\end{equation*}
and $f(t,x,A)=\D L(t,x,\cdot)$. Moreover, we have:
\begin{enumerate}
\item[\te{(A1)}] For any $R>0$ and for all $t\in[0,T]\setminus\mathcal{N}_R$,\, $x,y\in \B_R$,\, $a,b\in\B$
\begin{equation*}
\left\{\begin{array}{l}
|f(t,x,a)-f(t,y,a)|\leq 10(n+1)[\,k_R(t)|x-y|+w_R(t,|x-y|)+|M(t,x)-M(t,y)|\,]\\[0mm]
|f(t,x,a)-f(t,x,b)|\leq 10(n+1)M(t,x)|a-b|,
\end{array}\right.
\end{equation*}
where $M(t,x):=|\lambda(t,x)|+|H(t,x,0)|+c(t)(1+|x|)+1$.
\item[\te{(A2)}] $|f(t,x,a)|\leq c(t)(1+|x|)$ for all $t\in[0,T]$, $x\in\R^n$, $a\in\B$.
\item[\te{(A3)}] For any $R>0$ and for all $t\in[0,T]\setminus\mathcal{N}_R$,\, $x,y\in \B_R$,\, $a,b\in\B$
\begin{equation*}
\left\{\begin{array}{l}
|l(t,x,a)-l(t,y,a)|\leq 10(n+1)[\,k_R(t)|x-y|+w_R(t,|x-y|)+|M(t,x)-M(t,y)|\,]\\[0mm]
|l(t,x,a)-l(t,x,b)|\leq 10(n+1)M(t,x)|a-b|,
\end{array}\right.
\end{equation*}
\item[\te{(A4)}] Furthermore, if $H$, $\lambda(\cdot,\cdot)$, $c(\cdot)$ are continuous, so are $f,l$.
\end{enumerate}
\end{Th}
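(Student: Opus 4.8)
The plan is to reduce the theorem to the construction of a single regular map $e$ whose image is trapped between $E_{\lambda,L}$ and $E_L$, and then to read off $f,l$ from \eqref{def-fl}. First I would record the structural facts. By Corollary \ref{wrow-wm}, each $E_L(t,x)$ is nonempty, closed and convex, by (M5) its $v$-projection $\D L(t,x,\cdot)$ lies in the ball of radius $c(t)(1+|x|)$, and since $L(t,x,v)\geq\langle v,0\rangle-H(t,x,0)\geq-|H(t,x,0)|$ the values of $L$ are bounded below. Together with the cap $L(t,x,v)\leq\lambda(t,x)$ from \eqref{wol1}, this shows $E_{\lambda,L}(t,x)$ is nonempty, compact and convex and sits in a ball centred at the origin of radius comparable to $M(t,x)=|\lambda(t,x)|+|H(t,x,0)|+c(t)(1+|x|)+1$. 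Projecting the sandwich \eqref{zaw-rpr-ep} onto the $v$-axis gives $\D L(t,x,\cdot)\subset\pi_v(e(t,x,\B))\subset\D L(t,x,\cdot)$ (both ends equal $\D L(t,x,\cdot)$, using \eqref{wol1} for the inner set), so once $e$ is built the identity $f(t,x,\B)=\D L(t,x,\cdot)$ is automatic, as is (A2), since $|f(t,x,a)|\leq\|\D L(t,x,\cdot)\|\leq c(t)(1+|x|)$ by (M5). Finally, by Corollary \ref{hlc-cor-ner} one has $\mathscr{H}(E_L(t,x),E_L(t,y))\leq 2k_R(t)|x-y|+2w_R(t,|x-y|)$ off $\mathcal{N}_R$, and this will be the sole source of the $x$-regularity.

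The heart of the argument is the construction of $e:[0,T]\times\R^n\times\B\to\R^n\times\R$, $\B\subset\R^{n+1}$, realising \eqref{zaw-rpr-ep}, measurable in $t$ (via (M4) and the $t$-measurability of $\lambda,H(\cdot,\cdot,0),c$), and Lipschitz in $x$ with the constants of (A1) and (A3). The obstruction is that $E_{\lambda,L}$ itself is not Hausdorff-Lipschitz (cf.\ Example \ref{ex-6}), so neither parametrization theorem \cite[Thm. 9.7.1 and 9.7.2]{A-F} applies to it directly. The idea is therefore to anchor all regularity to $E_L$ and to use the cap $\eta\leq\lambda(t,x)$ only to keep the image compact. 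Concretely, I would reopen the series/Steiner-point machinery underlying \cite[Thm. 9.7.1 and 9.7.2]{A-F}, but drive it by the Hausdorff-Lipschitz data of $E_L$ from Corollary \ref{hlc-cor-ner}, producing a selection procedure that always lands in $E_L(t,x)$; rescaling the control by the radius $r(t,x)\simeq M(t,x)$ of a ball containing $E_{\lambda,L}(t,x)$ then compresses the parametrization onto the fixed compact ball $\B$. The two inclusions in \eqref{zaw-rpr-ep} fall out: the upper one because every value produced lies in $E_L(t,x)$; the lower one because the construction is arranged to cover the portion of $E_L(t,x)$ reachable within $r(t,x)\B$, and $E_{\lambda,L}(t,x)\subset E_L(t,x)\cap r(t,x)\B$.

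With $e$ in hand I would set $f:=\pi_v\circ e$ and $l:=\pi_\eta\circ e$ as in \eqref{def-fl}; these inherit $t$-measurability and joint $(x,a)$-continuity from $e$, and $l(t,x,a)=\pi_\eta(e(t,x,a))\geq -|H(t,x,0)|$ from the lower bound on $L$. The representation identity $H(t,x,p)=\sup_{a\in\B}\{\langle p,f(t,x,a)\rangle-l(t,x,a)\}$ is then exactly Proposition \ref{prop-reprezentacja H-ogr}, whose proof needs only \eqref{zaw-rpr-ep} together with $H=L^{\ast}$: the supremum of $\langle p,v\rangle-\eta$ over $E_L(t,x)$ is attained on the graph of $L$, which already lies inside $E_{\lambda,L}(t,x)$, so both sandwich bounds collapse to $H(t,x,p)$. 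The estimates (A1) and (A3) split into an $x$-part, governed by $\mathscr{H}(E_L(t,x),E_L(t,y))\leq 2k_R(t)|x-y|+2w_R(t,|x-y|)$ and by the variation $|M(t,x)-M(t,y)|$ of the rescaling radius, and an $a$-part, whose constant is the radius $M(t,x)$ itself; the universal factor $10(n+1)$ is the dimensional cost of the construction in $\R^{n+1}$. Continuity (A4) follows because, when $H,\lambda,c$ are continuous, (M6) makes $E_L$ continuous with closed graph and $M$ is continuous, so $e$ depends continuously on $(t,x,a)$. I expect the genuine difficulty to be concentrated entirely in the second paragraph: forcing the parametrization to be Lipschitz in $x$ (not merely H\"older, as a bare metric projection onto the moving set $E_L$ would give) while simultaneously confining it to the fixed compact ball $\B$, which is precisely why the proof must re-run the mechanisms of \cite[Thm. 9.7.1 and 9.7.2]{A-F} rather than invoke them as black boxes.
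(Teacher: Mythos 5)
Your proposal is correct and follows essentially the paper's own route: the paper's Theorem \ref{th-oparam} is exactly the "reopened" parametrization you describe --- it sets $e(t,x,a):=s_{n+1}\bigl(E_L(t,x)\cap\B(M(t,x)\,a,\,2d(M(t,x)\,a,E_L(t,x)))\bigr)$, so that all regularity is anchored to $E_L$ (via Corollary \ref{hlc-cor-ner} and Lemmas \ref{lem-pmh}, \ref{lem-scmh}) while the scaling $M(t,x)$ forces $E_{\lambda,L}(t,x)\subset e(t,x,\B)\subset E_L(t,x)$ from the bound $\|E_{\lambda,L}(t,x)\|\leq M(t,x)$, and the theorem is then read off through Proposition \ref{prop-reprezentacja H-ogr} and the projections \eqref{def-fl} exactly as you outline. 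The sandwich argument, the covering of $E_L(t,x)\cap M(t,x)\B$ by rescaled controls, and the splitting of (A1)/(A3) into an $x$-part and an $a$-part all coincide with the paper's proof (Theorem \ref{do-th-parame-lo1} and the remark following it).
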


The proof of Theorem \ref{th-rprez-glo12} is contained in Section \ref{pofrepth}.
Now we point out the differences between our construction of faithful representation and the ones contained in
\cite{F-S,FR}. In order to do this, we consider two following examples.

\begin{Ex}\label{prz-rep-010}
Let the Hamiltonian $H$ be as in Example \ref{ex-1}. We know that this Hamiltonian satisfies
assumptions (H1)$-$(H4) and \eqref{hip_l1}.
Our construction of representation $(\hat{A},\hat{f},\hat{l})$ of this Hamiltonian
leads to the set $\hat{A}=[-1,1]\times[-1,1]$ and functions:
\begin{equation*}
\hat{f}(x,a_1,a_2)=a_1|x|, \qquad \hat{l}(x,a_1,a_2)=|a_1|+|a_2|(1-|a_1|),
\end{equation*}
that satisfy the Lipschitz continuity. However, construction of representation  $(\check{A},\check{f},\check{l})$ of this Hamiltonian that is contained in \cite{F-S,FR} leads to the set  $\check{A}=[-1,1]$ and functions:
\begin{equation*}
\check{f}(x,a)=a|x|, \qquad
\check{l}(x,a)=L(x,\check{f}(x,a))=\left\{
\begin{array}{ccl}
|a| & \te{if} & x\not=0 \\[1mm]
0 & \te{if} & x=0.
\end{array}
\right.
\end{equation*}
We notice that the function  $\check{l}$ is discontinuous with respect to the variable $x$ for all $a\in[-1,1]\setminus\{0\}$.
\end{Ex}

\begin{Ex}
Let the Hamiltonian $H$ be as in Example \ref{ex-2}. We know that this Hamiltonian satisfies
assumptions (H1)$-$(H4) and \eqref{hip_l1}. Our construction of representation $(\hat{A},\hat{f},\hat{l})$ of this Hamiltonian
leads to the set $\hat{A}=\{(a_1,a_2)\in\R\times\R\mid a_1^2+a_2^2=1\}$ and functions:
\begin{equation*}
\hat{f}(x,a_1,a_2)=a_1, \qquad \hat{l}(x,a_1,a_2)=a_2+|x|,
\end{equation*}
that satisfy the Lipschitz continuity.  However, construction of representation  $(\check{A},\check{f},\check{l})$ of this Hamiltonian that is contained in \cite{F-S,FR} leads to the set $\check{A}=[-1,1]$ and functions:
\begin{equation*}
\check{f}(x,a)=a, \qquad
\check{l}(x,a)=L(x,\check{f}(x,a))=-\sqrt{1-a^2}+|x|.
\end{equation*}
We notice that the function $\check{l}$ is continuous, but does not satisfy the Lipschitz
continuity with respect to the variable  $a$. Obviously, we have the equalities

\begin{equation*}
\begin{split}
\sup_{(a_1,a_2)\;:\;a_1^2+a_2^2=1}&\,\{\, p\cdot \hat{f}(x,a_1,a_2)-\hat{l}(x,a_1,a_2)\,\}\;\; =\;\; \sup_{(a_1,a_2)\;:\;a_1^2+a_2^2=1}\,\{\, p\cdot a_1-a_2-|x|\,\}\\
&\hspace{-2mm}= \sup_{a_1\in[-1,1]}\,\left\{\, p\cdot a_1+\sqrt{1-a_1^2}-|x|\,\right\}\;\;=\;\; \sup_{a\in[-1,1]}\,\{\, p\cdot \check{f}(x,a)-\check{l}(x,a)\,\}.
\end{split}
\end{equation*}
\end{Ex}

\subsection{Stability of representations}\label{subsecstarep}
In this section we will see that the faithful representation obtained in the previous section is stable.

\begin{Th}\label{thm-rep-stab1}
Let $H_i,H:[0,T]\times\R^n\times\R^n\to\R$, $i\in\N$ be continuous and satisfy \te{(H1)$-$(H3)}, \eqref{hip_l1}. We consider the representations $(\R^{n+1}\!\!,f_i,l_i)$ and $(\R^{n+1}\!\!,f,l)$ of $H_i$ and $H$, respectively, defined as in the proof of Theorem~\ref{th-rprez-glo}. If $H_i$ converge uniformly on compacts to $H$, then $f_i$ converge to $f$ and $l_i$ converge to $l$ uniformly on compacts in $[0,T]\times\R^n\times\R^{n+1}$.
Furthermore, if $ H_i, H, i\in\N$ satisfy \te{(H4)}, then $ f_i, f, i\in\N$ satisfy~\eqref{th-rprez-glo-ww}.
\end{Th}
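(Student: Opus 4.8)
The plan is to show that the stability of the representations reduces, via the definitions \eqref{def-fl}, to a single stability statement about the constructed maps $e_i$ and $e$. Recall that $f_i=\pi_v\circ e_i$, $l_i=\pi_\eta\circ e_i$, and likewise for $f,l$; since the projections $\pi_v,\pi_\eta$ are $1$-Lipschitz, uniform convergence of $e_i$ to $e$ on compacts immediately yields uniform convergence of $f_i\to f$ and $l_i\to l$ on compacts in $[0,T]\times\R^n\times\R^{n+1}$. The final sentence about \eqref{th-rprez-glo-ww} under \te{(H4)} is then immediate: once $H_i,H$ satisfy \te{(H4)}, the bound $|f_i(t,x,a)|\le c_i(t)(1+|x|)$ is part of the conclusion of Theorem~\ref{th-rprez-glo} applied to each $H_i$, so nothing further is needed there. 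Thus the whole theorem rests on proving $e_i\to e$ uniformly on compacts.

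First I would trace through the construction of $e$ in the proof of Theorem~\ref{th-rprez-glo} (Section~\ref{pofrepth}) to identify exactly which data of the Hamiltonian the parametrization $e$ depends on. The parametrization of the epigraphical set-valued map $E_L$ produced by the parametrization theorem \cite[Thm.~9.7.1]{A-F} is built from geometric quantities attached to the closed convex sets $E_L(t,x)=\E L(t,x,\cdot)$ — typically projections onto these sets, support-type functions, or Steiner-point-like selections. The key reduction is therefore: the explicit formula for $e$ is a fixed continuous (indeed Lipschitz) recipe applied to the set $E_{L}(t,x)$, so it suffices to show that $E_{L_i}(t,x)\to E_L(t,x)$ in a mode strong enough to make that recipe converge. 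Because all the sets involved are closed (and the recipe is based on metric projections, which are continuous in the Painlevé–Kuratowski/Wijsman topology on closed convex sets), the natural target is set convergence in the sense of Wijsman, i.e. pointwise convergence of the distance functions $d(\cdot,E_{L_i}(t,x))\to d(\cdot,E_L(t,x))$, locally uniformly in $(t,x)$. This is precisely where Wijsman's Theorem \cite[Thm.~11.34]{R-W} enters, as announced in the introduction.

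The central analytic step is then to pass from the hypothesis ``$H_i\to H$ uniformly on compacts'' to ``$E_{L_i}\to E_{L}$ in the Wijsman sense, locally uniformly.'' The mechanism is the Legendre–Fenchel transform: by Proposition~\ref{prop2-fmw} and the identity $L(t,x,\cdot)=H^\ast(t,x,\cdot)$, the Lagrangians $L_i=H_i^\ast$ are conjugates of the $H_i$. Wijsman's Theorem \cite[Thm.~11.34]{R-W} states exactly that, for proper lsc convex functions, epi-convergence of the functions is equivalent to epi-convergence (equivalently Wijsman/Attouch–Wets convergence of epigraphs) of their conjugates. So I would argue: uniform-on-compacts convergence $H_i\to H$ of finite convex functions forces $H_i(t,x,\cdot)\to H(t,x,\cdot)$ in the epi-convergence sense for each fixed $(t,x)$; by continuity of conjugacy under epi-convergence this gives $L_i(t,x,\cdot)\to L(t,x,\cdot)$ epigraphically, i.e. $E_{L_i}(t,x)\to E_L(t,x)$ in the Wijsman sense. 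The uniform Lipschitz bounds \eqref{hip_l1}--\eqref{hip_l3}, combined with the growth control \te{(M5)} (the domains $\D L_i(t,x,\cdot)$ lie in a ball of radius $c_i(t)(1+|x|)$), then upgrade this pointwise set convergence to convergence that is uniform as $(t,x)$ ranges over compacts, and transfer it through the projection-based formula for $e_i$.

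The main obstacle I anticipate is this last uniformity/equi-continuity upgrade. Wijsman's Theorem gives convergence of the conjugates for each fixed $(t,x)$, but the conclusion of the theorem demands uniform convergence of $e_i$ over compact sets of $(t,x,a)$, which requires the set convergence $E_{L_i}(t,x)\to E_L(t,x)$ to be locally uniform in $(t,x)$ and requires the selection recipe to depend on the sets in an equi-continuous fashion. To control this I would exploit the common Lipschitz moduli: Corollary~\ref{hlc-cor-ner} and Remark~\ref{remdlsh} give Hausdorff-Lipschitz estimates on $x\mapsto E_L(t,x)$ and on $x\mapsto\D L(t,x,\cdot)$ with constants $k_R,w_R$ that can be taken uniform in $i$ provided the data $c_i,k_R^i,w_R^i$ associated with the $H_i$ are uniformly controlled on compacts — a fact one extracts from the uniform convergence $H_i\to H$ together with \te{(H4)}/\eqref{hip_l1}. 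This equi-Lipschitz property yields an Arzelà–Ascoli-type equicontinuity of the maps $(t,x)\mapsto E_{L_i}(t,x)$ in the Hausdorff metric on compacts, which promotes the pointwise Wijsman convergence to the locally uniform convergence needed, and finally the Lipschitz dependence of the parametrization $e$ on its generating set closes the argument. The delicate bookkeeping is verifying that the moduli governing the $E_{L_i}$ are genuinely uniform in $i$; once that is secured, the remaining steps are routine continuity arguments.
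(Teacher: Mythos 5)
Your opening reduction (project $e_i$ onto its two components, so everything follows from ``$e_i\to e$ uniformly on compacts'') and your dispatch of the \te{(H4)} clause are both correct and agree with the paper, and your first analytic step -- using Legendre--Fenchel conjugation and Wijsman's theorem \cite[Thm.~11.34]{R-W} to turn uniform convergence of $H_i$ into epigraphical convergence of $L_i$ -- is exactly the paper's Propositions \ref{epiconv} and \ref{klconv}. The genuine gap is in your ``uniformity upgrade'', and both of its ingredients fail. First, uniform convergence of $H_i$ to $H$ on compacts gives no control on the moduli $k_R^i,w_R^i$ of \eqref{hip_l1}: the Hamiltonians $H_i(t,x,p)=H(t,x,p)+i^{-1}|\sin(i^2x)|\,|p|$ satisfy \te{(H1)$-$(H3)} and \eqref{hip_l1} and converge uniformly on compacts to $H$, yet (let $|p|\to\infty$ and take $x,y$ near a zero of $\sin(i^2\,\cdot\,)$) any admissible $k_R^i(t)$ must grow like $i$; so the equi-Lipschitz property of $x\mapsto E_{L_i}(t,x)$ that your Arzel\`a--Ascoli step needs is simply not available, and in the $t$ variable matters are worse, since already a single continuous Hamiltonian satisfying the hypotheses can have $t\mapsto E_L(t,x)$ discontinuous in the Hausdorff metric (the paper's Example \ref{ex-5}). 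Second, and more fundamentally, the target of the upgrade -- (locally uniform) Hausdorff-type convergence of $E_{L_i}$ to $E_L$ -- is false under the hypotheses of Theorem \ref{thm-rep-stab1}, because epigraphs are unbounded and Painlev\'e--Kuratowski/Wijsman convergence does not imply Hausdorff convergence: for $H_i(p)=\max\{|p|,\,p^2/i\}$ and $H(p)=|p|$ (no $(t,x)$ dependence, so \eqref{hip_l1} is trivial) one has $H_i=H$ eventually on every compact set, yet $\D L_i=\R$ while $\D L=[-1,1]$, so $\mathscr{H}(E_{L_i},E_L)\geq 1$ for all $i$. This is precisely why the paper proves uniform Hausdorff convergence of the epigraphs (Proposition \ref{gklconv}) only under the strictly stronger hypothesis \eqref{thm-rep-stab5-11} of Theorems \ref{thm-rep-stab5}--\ref{thm-rep-stab6}, and proves Theorem \ref{thm-rep-stab1} by a different device. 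Your closing appeal to ``Lipschitz dependence of the parametrization on the generating set'' runs into the same wall: Lemmas \ref{lem-pmh} and \ref{lem-scmh} are Lipschitz estimates in the \emph{Hausdorff} metric, so Wijsman convergence of the unbounded sets $E_{L_i}$ cannot be fed into them directly.

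What the paper does instead (Theorem \ref{thm-zjnzz}) requires no uniform-in-$i$ moduli at all: it proves \emph{continuous} convergence, i.e.\ $e_i(t_i,x_i,a_i)\to e(t,x,a)$ for every sequence $(t_i,x_i,a_i)\to(t,x,a)$, which for a continuous limit is equivalent to uniform convergence on compacts. The mechanism is to work with the truncated sets $\Phi_i=E_{L_i}\cap G_i$ of the construction: along a convergent sequence these are compact and lie in one fixed ball, where Painlev\'e--Kuratowski convergence coincides with Hausdorff convergence \eqref{zmkh}; their convergence is obtained from Proposition \ref{klconv}, the ball estimate \eqref{dow-omk}, and the convex-intersection theorem \cite[Thm.~4.32]{R-W} (Theorem \ref{thmozkp}), with a separate elementary argument in the degenerate case $\I G(t,x,a)=\emptyset$; the Steiner-point estimate (Lemma \ref{lem-scmh}) then transfers this to $e_i$. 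If you replace your equicontinuity/Arzel\`a--Ascoli step by this sequential truncation argument, the rest of your outline goes through.
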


\begin{Th}\label{thm-rep-stab2}
Let $H_i,H:[0,T]\times\R^n\times\R^n\to\R$, $i\in\N$ satisfy \te{(H1)$-$(H4)}, \eqref{hip_l1}. We suppose that $L_i$, $L$ , $i\in\N$ are given by \eqref{tran1} and satisfy \eqref{wol1}. Let $H_i,\lambda_i,c_i$, $i\in\N$ be continuous. We consider the representations $(\B,f_i,l_i)$ and $(\B,f,l)$ of $H_i$ and $H$, respectively,  defined as in the proof of Theorem~\ref{th-rprez-glo12}. If $H_i,\lambda_i,c_i$ converge uniformly on compacts to $H,\lambda,c$, then $f_i$ converge to $f$ and $l_i$ converge to $l$ uniformly on compacts in $[0,T]\times\R^n\times\B$.
\end{Th}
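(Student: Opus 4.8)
The plan is to mimic the proof of Theorem~\ref{thm-rep-stab1}, adapting it to the truncated epigraph map $E_{\lambda_i,L_i}$ and to the compact control set $A=\B$. Since $f_i=\pi_v\circ e_i$ and $l_i=\pi_\eta\circ e_i$, where $e_i$ is the parametrization constructed in the proof of Theorem~\ref{th-rprez-glo12} from the data $(L_i,\lambda_i)$, it suffices to prove that $e_i\to e$ uniformly on compacts in $[0,T]\times\R^n\times\B$. I would split this into two ingredients: (i) convergence of the underlying set-valued maps $E_{L_i}$ and $E_{\lambda_i,L_i}$ towards $E_L$ and $E_{\lambda,L}$, and (ii) continuity of the parametrization recipe of Theorem~\ref{th-rprez-glo12} with respect to this set convergence.

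For step (i) the starting point is Wijsman's Theorem~\cite[Thm.~11.34]{R-W}. Fix $(t,x)$. The maps $p\to H_i(t,x,p)$ and $p\to H(t,x,p)$ are finite and convex, so uniform convergence on compacts of $H_i$ to $H$ yields epi-convergence of $H_i(t,x,\cdot)$ to $H(t,x,\cdot)$. By Wijsman's Theorem the conjugates epi-converge as well, i.e. $L_i(t,x,\cdot)=H_i^{\ast}(t,x,\cdot)$ epi-converge to $L(t,x,\cdot)=H^{\ast}(t,x,\cdot)$; equivalently, the epigraphs $E_{L_i}(t,x)$ converge to $E_L(t,x)$ in the Painlev\'e--Kuratowski sense. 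Intersecting with the half-spaces $\{\eta\le\lambda_i(t,x)\}$ and using $\lambda_i(t,x)\to\lambda(t,x)$, I would deduce $E_{\lambda_i,L_i}(t,x)\to E_{\lambda,L}(t,x)$, where the bound $L\le\lambda$ on the effective domain guaranteed by \eqref{wol1} ensures that the truncation keeps the lower boundary carrying the graph of $L$. Finally, the a~priori bound $\|\D L_i(t,x,\cdot)\|\le c_i(t)(1+|x|)$ from (M5), together with $c_i\to c$ and the lower bound $L_i(t,x,\cdot)\ge -|H_i(t,x,0)|$, confines the $E_{\lambda_i,L_i}(t,x)$ to a box that is bounded uniformly on compacts; on such a family Painlev\'e--Kuratowski convergence coincides with Hausdorff convergence at each fixed $(t,x)$.

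For step (ii), recall that $e_i(t,x,a)$ is built in the proof of Theorem~\ref{th-rprez-glo12} from $E_{\lambda_i,L_i}(t,x)$ and $E_{L_i}(t,x)$ through projection-type operations, which are continuous with respect to Hausdorff convergence of convex sets. Hence step (i) gives $e_i(t,x,a)\to e(t,x,a)$ for every $(t,x,a)$. To promote this to uniform convergence on compacts I would invoke the equi-Lipschitz estimates (A1) and (A3): the moduli $k_R$, $w_R$ and the functions $M_i(t,x)=|\lambda_i(t,x)|+|H_i(t,x,0)|+c_i(t)(1+|x|)+1$ are controlled uniformly on compacts because $H_i,\lambda_i,c_i$ converge uniformly there, so the family $\{e_i\}$ is equicontinuous in $(x,a)$ on compacts. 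Pointwise convergence plus equicontinuity then yields uniform convergence on compacts by an Arzel\`a--Ascoli argument, and applying $\pi_v,\pi_\eta$ finishes the proof.

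The main obstacle I anticipate is the uniformity in step (ii), i.e. transferring the continuity of the single-point parametrization into convergence that is uniform in $(t,x)$ jointly with $a\in\B$. The delicate point is that $e$ is \emph{not} a genuine parametrization of $E_{\lambda,L}$ but only satisfies the two-sided inclusion \eqref{zaw-rpr-ep}, so I must track how the projection data depends simultaneously on $E_{\lambda_i,L_i}$ and $E_{L_i}$ and verify that the boundary truncation by $\lambda_i$ varies continuously. I expect the equi-Lipschitz bounds (A1)--(A3), valid uniformly once $H_i,\lambda_i,c_i$ are uniformly bounded on compacts, to be exactly what absorbs these effects and to justify the Arzel\`a--Ascoli passage.
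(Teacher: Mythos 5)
Your decomposition (set convergence plus stability of the parametrization recipe) is in the right spirit, but it rests on a misreading of the construction, and the step that should deliver uniformity fails. In the proof of Theorem~\ref{th-rprez-glo12} (Theorem~\ref{do-th-parame-lo1} via Theorem~\ref{th-oparam}) the map $e_i$ is defined purely from the \emph{unbounded} epigraph $E_{L_i}$ and the scalar field $M_i(t,x)=|\lambda_i(t,x)|+|H_i(t,x,0)|+c_i(t)(1+|x|)+1$, namely $e_i(t,x,a)=s_{n+1}\bigl(E_{L_i}(t,x)\cap G_i(t,x,a)\bigr)$ with $G_i(t,x,a)=\B\bigl(M_i(t,x)\,a,\,2d(M_i(t,x)\,a,E_{L_i}(t,x))\bigr)$; the truncated set $E_{\lambda_i,L_i}$ never enters the formula for $e_i$ --- it is used only afterwards, as the map $Q$ of Theorem~\ref{th-oparam}, to verify $E_{\lambda_i,L_i}(t,x)\subset e_i(t,x,\B)$. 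So your step (i) convergence of $E_{\lambda_i,L_i}$ is not the relevant input, and your ``bounded box, hence Painlev\'e--Kuratowski $=$ Hausdorff'' remark is attached to the wrong sets: the sets that actually enter, $E_{L_i}(t,x)$, are unbounded, $\mathscr{H}(E_{L_i}(t,x),E_{L}(t,x))$ need not tend to $0$ (it can be $+\infty$), and Lemma~\ref{lem-pmh} is therefore not directly applicable. This is exactly why the paper (proof of Theorem~\ref{thm-zjnzz}) first intersects with the balls $G_i$, shows all sets $\Phi_i(t_i,x_i,a_i)$ lie in one fixed ball $\B_C$, converts Painlev\'e--Kuratowski convergence into Hausdorff convergence only at that stage via \eqref{zmkh}, and treats the intersection by Theorem~\ref{thmozkp} together with a separate argument in the degenerate case $\I G(t,x,a)=\emptyset$, where $\lambda_i$ influences $e_i$ only through $M_i$.

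The fatal gap is the Arzel\`a--Ascoli step. The estimates (A1), (A3) hold for $t\in[0,T]\setminus\mathcal{N}_R$ and control only increments in $(x,a)$ at frozen $t$; they contain no modulus in $t$ whatsoever (the functions $k_R(\cdot)$, $w_R(\cdot,\cdot)$ are merely measurable in $t$), so they cannot give equicontinuity of $\{e_i\}$ in the $t$ variable, which is indispensable for uniform convergence on compacts of $[0,T]\times\R^n\times\B$. Worse, the moduli in (A1), (A3) for $f_i,l_i$ are those of \eqref{hip_l1} for $H_i$, i.e.\ $k_R^i(t)$, $w_R^i(t,\cdot)$, and nothing in the hypotheses makes them uniform in $i$: uniform convergence of $H_i$ to $H$ on compacts does not bound Lipschitz-type constants of the $H_i$ (a perturbation of size $1/i$ oscillating in $x$ at frequency $i^2$ converges uniformly while its constants blow up). So the claimed equicontinuity fails on both counts, and pointwise convergence cannot be upgraded to uniform convergence this way. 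The paper needs no equicontinuity: it proves \emph{continuous convergence}, $e_i(t_i,x_i,a_i)\to e(t,x,a)$ for every sequence $(t_i,x_i,a_i)\to(t,x,a)$ (Theorem~\ref{thm-zjnzz}), which for continuous functions is equivalent to uniform convergence on compact sets. The price is that the set convergence must be established along moving base points $(t_i,x_i)\to(t,x)$, which is exactly what Propositions~\ref{epiconv} and~\ref{klconv} (via Wijsman's theorem) provide and is strictly stronger than your frozen-$(t,x)$ epi-convergence; the conclusion then follows from the Lipschitz property of the Steiner selection (Lemma~\ref{lem-scmh}). To repair your argument you would have to replace the Arzel\`a--Ascoli passage by this sequential scheme, or produce $i$- and $t$-uniform moduli, which the hypotheses do not supply.
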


\begin{Th}\label{thm-rep-stab3}
Let $H_i,H:[0,T]\times\R^n\times\R^n\to\R$, $i\in\N$ satisfy \te{(H1)$-$(H3)}, \eqref{hip_l1}. We consider the representations $(\R^{n+1}\!\!,f_i,l_i)$ and $(\R^{n+1}\!\!,f,l)$ of $H_i$ and $H$, respectively, defined as in the proof of Theorem~\ref{th-rprez-glo}. If $H_i(t,\cdot,\cdot)$ converge uniformly on compacts to $H(t,\cdot,\cdot)$ for all $t\in[0,T]$, then $f_i(t,\cdot,\cdot)$ converge to $f(t,\cdot,\cdot)$ and $l_i(t,\cdot,\cdot)$ converge to $l(t,\cdot,\cdot)$ uniformly on compacts in $\R^n\times\R^{n+1}$ for all $t\in[0,T]$.
Furthermore, if $ H_i, H, i\in\N$ satisfy \te{(H4)}, then $ f_i, f, i\in\N$ satisfy \eqref{th-rprez-glo-ww}.
\end{Th}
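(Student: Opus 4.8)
The plan is to carry out, at each fixed $t\in[0,T]$, exactly the argument behind Theorem~\ref{thm-rep-stab1}, the difference being that $H_i$ is now only measurable in $t$ and the hypothesis is fiberwise, so $t$ is frozen throughout and joint continuity in $t$ is never used. Because the representations coming from the proof of Theorem~\ref{th-rprez-glo} are assembled fiber by fiber — for each $t$ the maps $f_i(t,\cdot,\cdot),l_i(t,\cdot,\cdot)$ are read off through \eqref{def-fl} from a parametrization $e_i(t,\cdot,\cdot)$ of the epigraphical set-valued map $x\mapsto E_{L_i}(t,x)$, and similarly for $H$ — it is enough to prove that for each fixed $t$ the parametrizations $e_i(t,\cdot,\cdot)$ converge to $e(t,\cdot,\cdot)$ uniformly on compacts of $\R^n\times\R^{n+1}$. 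The engine of the proof is the bicontinuity of the Legendre--Fenchel transform with respect to epi-convergence, i.e. Wijsman's theorem \cite[Thm.~11.34]{R-W}.

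First I would transfer the convergence from Hamiltonians to Lagrangians. Fix $t$ and $x$. From $H_i(t,\cdot,\cdot)\to H(t,\cdot,\cdot)$ uniformly on compacts it follows that the finite convex functions $p\mapsto H_i(t,x,p)$ converge to $p\mapsto H(t,x,p)$ uniformly on compacts of $\R^n$, which for finite convex functions is precisely epi-convergence. Hence, by \cite[Thm.~11.34]{R-W}, the conjugates $L_i(t,x,\cdot)=H_i^{\ast}(t,x,\cdot)$ epi-converge to $L(t,x,\cdot)=H^{\ast}(t,x,\cdot)$; all of these functions are proper, lower semicontinuous and convex by Proposition~\ref{prop2-fmw}, so epi-convergence of the functions is the same as Wijsman convergence of their epigraphs. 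This gives $E_{L_i}(t,x)\to E_L(t,x)$ for every $x$, the correct notion even when the effective domains are unbounded (as in Example~\ref{ex-5}), where no bound of the form (M5) is available.

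Next I would push this set convergence through the parametrization. The map $e$ built in the proof of Theorem~\ref{th-rprez-glo} is a universal, locally Lipschitz functional of the epigraphical datum $E_L(t,x)$ together with the control $a\in\R^{n+1}$ (in the spirit of the projection formula displayed in the Example following that theorem), and its modulus of continuity in the set argument does not depend on the index $i$. Combining this with the locally uniform-in-$x$ Wijsman convergence $E_{L_i}(t,x)\to E_L(t,x)$ yields $e_i(t,\cdot,\cdot)\to e(t,\cdot,\cdot)$ uniformly on compacts, and then $f_i(t,\cdot,\cdot)\to f(t,\cdot,\cdot)$, $l_i(t,\cdot,\cdot)\to l(t,\cdot,\cdot)$ uniformly on compacts by \eqref{def-fl}. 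The closing assertion is not about convergence: when (H4) holds, each $H_i$ and $H$ satisfies (H4), so \eqref{th-rprez-glo-ww} for $f_i$ and $f$ is immediate from Theorem~\ref{th-rprez-glo}, being a restatement of the growth bound (M5) on $\D L_i(t,x,\cdot)$ and $\D L(t,x,\cdot)$.

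The main obstacle is upgrading the pointwise-in-$x$ convergence $E_{L_i}(t,x)\to E_L(t,x)$, which Wijsman's theorem gives directly, to convergence that is uniform for $x$ in compacts, together with its uniform transfer through the parametrization. The first point requires checking that joint uniform-on-compacts convergence of $H_i$ in $(x,p)$ yields epi-convergence of $L_i(t,x,\cdot)$ to $L(t,x,\cdot)$ locally uniformly in the parameter $x$; the second requires that the selection recipe of Theorem~\ref{th-rprez-glo} depends continuously on the set with a modulus independent of $i$. Both are the genuinely technical steps, and — since they are exactly the fiberwise content already established in the proof of Theorem~\ref{thm-rep-stab1} — the present theorem is obtained by reading that proof with $t$ held fixed and discarding everything concerning continuity in $t$.
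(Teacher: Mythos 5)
Your overall route --- freeze $t$, use Wijsman's theorem to pass from convergence of the Hamiltonians to convergence of the epigraphs $E_{L_i}(t,x)$, then push this through the Steiner-selection parametrization --- is the same as the paper's: its proof of Theorem \ref{thm-rep-stab3} consists exactly of the remark that the proof of Theorem \ref{thm-zjnzz} (the engine behind Theorem \ref{thm-rep-stab1}) can be repeated with $t$ held fixed. Your first step (the fiberwise version of Propositions \ref{epiconv} and \ref{klconv}) and your closing observation about (H4) and \eqref{th-rprez-glo-ww} are correct.

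The gap is in the mechanism you propose for the transfer step, which is precisely where the work lies. You assert that $e$ is a Lipschitz functional of the set $E_L(t,x)$ with modulus independent of $i$, and that composing this with Wijsman convergence of $E_{L_i}(t,x)$ gives uniform convergence of $e_i(t,\cdot,\cdot)$. But the Lipschitz dependence actually available (Lemmas \ref{lem-pmh} and \ref{lem-scmh}, i.e. estimate \eqref{stbprof2}) is with respect to the \emph{Hausdorff} distance of the epigraphs, whereas Wijsman's theorem only gives Painlev\'e--Kuratowski/Wijsman convergence; for the unbounded sets $E_{L_i}(t,x)$ these are genuinely different, and under the hypotheses of Theorem \ref{thm-rep-stab3} the Hausdorff distance need not tend to zero at all. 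Concretely, take $n=1$, $H_i(t,x,p):=\max\{|p|-i,\,0\}$ and $H\equiv 0$: all of (H1)--(H4) and \eqref{hip_l1} hold and $H_i\to H$ uniformly on compacts, yet $L_i(t,x,v)=i|v|$ for $|v|\leq 1$ ($+\infty$ otherwise) while $L(t,x,\cdot)$ is the indicator of $\{0\}$, so the point $(1,i)\in E_{L_i}(t,x)$ lies at distance $1$ from $E_L(t,x)=\{0\}\times[0,\infty)$ and $\mathscr{H}(E_{L_i}(t,x),E_L(t,x))=1$ for every $i$. Thus your composed estimate yields nothing in exactly the situations the theorem is meant to cover. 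The paper's proof of Theorem \ref{thm-zjnzz} circumvents this by localizing \emph{before} invoking any Lipschitz property: it proves Painlev\'e--Kuratowski convergence of the compact convex sets $\Phi_i=E_{L_i}\cap G_i$ to $\Phi=E_L\cap G$, using the convex intersection theorem \cite[Thm. 4.32]{R-W} (Theorem \ref{thmozkp}) when $\I G\not=\emptyset$ and a separate degenerate-case argument when $G$ is a singleton; since all these sets lie in one fixed ball, Painlev\'e--Kuratowski and Hausdorff convergence then coincide by \eqref{zmkh}, and only at that point is the Steiner estimate \eqref{stbprof1} applied. The Hausdorff-based estimate \eqref{stbprof2} you implicitly rely on is reserved in the paper for Theorems \ref{thm-rep-stab5} and \ref{thm-rep-stab6}, whose stronger hypothesis \eqref{thm-rep-stab5-11} does force Hausdorff convergence of the epigraphs (Proposition \ref{gklconv}). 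To repair your write-up, replace the ``Lipschitz functional of the epigraphical datum'' argument by the fiberwise version of the $\Phi_i\to\Phi$ argument from Theorem \ref{thm-zjnzz}; deferring to ``the fiberwise content of Theorem \ref{thm-rep-stab1}'' is legitimate only if that content is described correctly.
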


\begin{Th}\label{thm-rep-stab4}
Let $H_i,H:[0,T]\times\R^n\times\R^n\to\R$, $i\in\N$ satisfy \te{(H1)$-$(H4)}, \eqref{hip_l1}. We suppose that $L_i$, $L$, $i\in\N$ are given by \eqref{tran1} and satisfy \eqref{wol1}. We consider the representations $(\B,f_i,l_i)$ and $(\B,f,l)$ of $H_i$ and $H$, respectively, defined as in the proof of Theorem~\ref{th-rprez-glo12}. If $H_i(t,\cdot,\cdot),\;\lambda_i(t,\cdot)$ converge uniformly on compacts to $H(t,\cdot,\cdot),\;\lambda(t,\cdot)$, and $c_i(t)\to c(t)$ for all $t\in[0,T]$, then $f_i(t,\cdot,\cdot)$ converge to $f(t,\cdot,\cdot)$ and $l_i(t,\cdot,\cdot)$ converge to $l(t,\cdot,\cdot)$ uniformly on compacts in $\R^n\times\B$ for all $t\in[0,T]$.
\end{Th}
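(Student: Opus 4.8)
The plan is to deduce Theorem \ref{thm-rep-stab4} from the construction underlying Theorem \ref{th-rprez-glo12} by freezing the time variable and reusing, section by section, the stability argument already carried out in the proof of Theorem \ref{thm-rep-stab2}. The key observation is that, for each fixed $t\in[0,T]$, the functions $f(t,\cdot,\cdot)$ and $l(t,\cdot,\cdot)$ are produced by a recipe that depends only on the section $H(t,\cdot,\cdot)$, on $\lambda(t,\cdot)$ and on the constant $c(t)$: indeed $e(t,\cdot,\cdot)$ is built from the set-valued maps $x\mapsto E_{\lambda,L}(t,x)$ and $x\mapsto E_L(t,x)$ through the inclusions \eqref{zaw-rpr-ep}, and $f=\pi_v\circ e$, $l=\pi_\eta\circ e$ by \eqref{def-fl}. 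Consequently it suffices to fix $t$ once and for all and to verify that the sectional data convergences assumed in the statement propagate, through this recipe, to uniform-on-compacts convergence of $e_i(t,\cdot,\cdot)$ to $e(t,\cdot,\cdot)$ on $\R^n\times\B$.

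First I would translate the hypothesis $H_i(t,\cdot,\cdot)\to H(t,\cdot,\cdot)$ (uniformly on compacts, with finite convex sections in $p$) into convergence of the Lagrangians by Wijsman's Theorem \cite[Thm.\ 11.34]{R-W}: for each $x$, $L_i(t,x,\cdot)=H_i^{\ast}(t,x,\cdot)$ epi-converges to $L(t,x,\cdot)=H^{\ast}(t,x,\cdot)$, equivalently $E_{L_i}(t,x)\to E_L(t,x)$ in the Painlev\'e--Kuratowski sense. Using (M5) and (L5), the bound $\|\D L(t,x,\cdot)\|\le c(t)(1+|x|)$ together with $c_i(t)\to c(t)$ keeps the horizontal extent of all the epigraphs inside a common ball on each compact set of $x$, which upgrades the set convergence to the local uniformity in $x$ needed below. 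Combining this with $\lambda_i(t,\cdot)\to\lambda(t,\cdot)$ I would then show that the capped sets converge, $E_{\lambda_i,L_i}(t,x)\to E_{\lambda,L}(t,x)$, locally uniformly in $x$; here $\lambda_i\to\lambda$ controls the upper face $\eta\le\lambda(t,x)$ while the $c(t)$-bound controls the base, so that both generating maps appearing in \eqref{zaw-rpr-ep} converge simultaneously.

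Finally I would invoke the continuity of the parametrization recipe with respect to its set-valued input: since $e$ is constructed by the explicit procedure borrowed from the parametrization theorems \cite[Thm.\ 9.7.1 and 9.7.2]{A-F}, and that procedure is the same one shown to be stable in the proof of Theorem \ref{thm-rep-stab2}, the convergence of the generating maps forces $e_i(t,\cdot,\cdot)\to e(t,\cdot,\cdot)$ uniformly on compacts in $\R^n\times\B$, whence $f_i(t,\cdot,\cdot)\to f(t,\cdot,\cdot)$ and $l_i(t,\cdot,\cdot)\to l(t,\cdot,\cdot)$ by \eqref{def-fl}. The main obstacle is precisely this last step: one must check that the compact-control construction is genuinely continuous in the data \emph{uniformly on compacts in $(x,a)$}, which is delicate because $e$ is not a parametrization of either $E_{\lambda,L}$ or $E_L$ but only satisfies the sandwiched inclusions \eqref{zaw-rpr-ep}, so one has to ensure that the simultaneous convergence of the capped sets and of the bounding constant prevents the intermediate selection from oscillating. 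Since this continuity is exactly what the proof of Theorem \ref{thm-rep-stab2} establishes for the jointly convergent data, the sectional version stated here follows by rerunning that argument with $t$ held fixed, the joint-convergence hypotheses replaced by their sections, and no use made of any regularity in $t$.
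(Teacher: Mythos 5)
Your proposal is correct and takes essentially the same route as the paper: the paper proves Theorem \ref{thm-rep-stab4} precisely by fixing $t\in[0,T]$ and rerunning the argument behind Theorem \ref{thm-rep-stab2} (i.e.\ Theorem \ref{thm-zjnzz}: Wijsman's theorem giving $E_{L_i}(t,x_i)\to E_L(t,x)$, the common-ball reduction from Painlev\'e--Kuratowski to Hausdorff convergence, Theorem \ref{thmozkp} for the intersections $\Phi_i$, and the Steiner-point Lipschitz estimate) with the joint hypotheses replaced by their sections. One minor remark: the convergence of the capped sets $E_{\lambda_i,L_i}(t,x)$ that you propose to establish is not actually needed, because in the construction $\lambda_i$ enters only through the scaling function $M_i(t,x)=|\lambda_i(t,x)|+|H_i(t,x,0)|+c_i(t)(1+|x|)+1$, so the stability argument requires only $M_i(t,\cdot)\to M(t,\cdot)$ and $E_{L_i}(t,\cdot)\to E_L(t,\cdot)$.
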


The proofs of Theorems \ref{thm-rep-stab1}, \ref{thm-rep-stab2}, \ref{thm-rep-stab3}, \ref{thm-rep-stab4} are contained in Section \ref{thms-stab}.

\begin{Th}\label{thm-rep-stab5}
Let $H_i,H:[0,T]\times\R^n\times\R^n\to\R$, $i\in\N$ be continuous and satisfy \te{(H1)$-$(H3)}, \eqref{hip_l1}. We consider the representations $(\R^{n+1}\!\!,f_i,l_i)$ and $(\R^{n+1}\!\!,f,l)$ of $H_i$ and $H$, respectively,  defined as in the proof of Theorem~\ref{th-rprez-glo}. If $H_i$ and  $H$ satisfy the condition
\begin{equation}\label{thm-rep-stab5-11}
\sup_{(t,x,p)\,\in\,[0,T]\times\B_R\times\R^n}\frac{|H_i(t,x,p)-H(t,x,p)|}{1+|p|}\;\xrightarrow[i\to\infty]{}\;0,\quad\forall\;R>0,
\end{equation}
then $f_i,f$ and $l_i,l$ satisfy the conditions
\begin{equation*}
\left\{\begin{array}{l}
\sup\limits_{(t,x,a)\,\in\,[0,T]\times\B_R\times\R^{n+1}}|f_i(t,x,a)-f(t,x,a)|\;\xrightarrow[i\to\infty]{}\;0,\quad\forall\;R>0,\\[3mm]
\sup\limits_{(t,x,a)\,\in\,[0,T]\times\B_R\times\R^{n+1}}\;\;|l_i(t,x,a)-l(t,x,a)|\;\xrightarrow[i\to\infty]{}\;0,\quad\forall\;R>0.
\end{array}\right.
\end{equation*}
 Furthermore, if $ H_i, H, i\in\N$ satisfy \te{(H4)}, then $ f_i, f, i\in\N$ satisfy \eqref{th-rprez-glo-ww}.
\end{Th}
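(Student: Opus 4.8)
The plan is to deduce Theorem~\ref{thm-rep-stab5} from a single quantitative ingredient: the weighted convergence \eqref{thm-rep-stab5-11} forces the epigraphical maps $E_{L_i}$ to converge to $E_L$ in the \emph{Hausdorff} distance, uniformly on $[0,T]\times\B_R$ and with an explicit rate. Once this is available, the conclusion follows because the map $e$ built in the proof of Theorem~\ref{th-rprez-glo} depends on the set $E_L(t,x)$ in a way that is stable under Hausdorff perturbations, uniformly in the control $a\in\R^{n+1}$. This is exactly the feature distinguishing the present statement from Theorem~\ref{thm-rep-stab1}: there only local uniform convergence $H_i\to H$ is assumed, which yields (through Wijsman's Theorem \cite[Thm.~11.34]{R-W}) merely the pointwise Wijsman convergence $d(\cdot,E_{L_i}(t,x))\to d(\cdot,E_L(t,x))$, and hence, together with the equi-Lipschitz dependence of $e_i$ on $a$ with constant $10(n+1)$, only uniform convergence on compact sets of controls. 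The stronger, $p$-global hypothesis \eqref{thm-rep-stab5-11} is what upgrades this to a true Hausdorff rate and thus to uniformity over the whole noncompact control set.

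First I would carry out the translation from Hamiltonians to epigraphs. Writing $\sigma_S(u):=\sup_{z\in S}\langle u,z\rangle$ for the support function of a closed convex set $S\subset\R^{n+1}$, one has, for $u=(p,-s)$ with $s>0$, the identity $\sigma_{E_L(t,x)}(p,-s)=s\,L^{\ast}(t,x,p/s)=s\,H(t,x,p/s)$, while $\sigma_{E_L(t,x)}(p,\tau)=+\infty$ for $\tau>0$; the horizontal directions $\tau=0$ are recovered by letting $s\downarrow 0$ (here $\inf_v L(t,x,v)=-H(t,x,0)$ is finite, which legitimises the passage to the limit). Denoting by $\varepsilon_i(R)$ the supremum in \eqref{thm-rep-stab5-11}, the bound $|H_i-H|\leq\varepsilon_i(R)(1+|p|)$ on $[0,T]\times\B_R\times\R^n$ gives, after scaling,
\begin{equation*}
|\sigma_{E_{L_i}(t,x)}(p,-s)-\sigma_{E_L(t,x)}(p,-s)|=s\,|H_i(t,x,p/s)-H(t,x,p/s)|\leq\varepsilon_i(R)(s+|p|)\leq\sqrt{2}\,\varepsilon_i(R)\,|(p,-s)|
\end{equation*}
for all $(t,x)\in[0,T]\times\B_R$ and all $s>0$, hence by $s\downarrow 0$ for all $s\geq 0$. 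Using the representation $d(z,S)=\sup_{|u|\leq 1}(\langle u,z\rangle-\sigma_S(u))$ of the distance to a closed convex set, and the fact that $\sigma_{E_{L_i}}(u)=\sigma_{E_L}(u)=+\infty$ for every direction with positive last coordinate, I obtain $d(z,E_L(t,x))\leq\sqrt{2}\,\varepsilon_i(R)$ for every $z\in E_{L_i}(t,x)$ and, symmetrically, $d(z,E_{L_i}(t,x))\leq\sqrt{2}\,\varepsilon_i(R)$ for $z\in E_L(t,x)$, whence
\begin{equation*}
\mathscr{H}(E_{L_i}(t,x),E_L(t,x))\leq\sqrt{2}\,\varepsilon_i(R)\qquad\te{for all}\;\;(t,x)\in[0,T]\times\B_R .
\end{equation*}
This is the required uniform Hausdorff estimate, with rate $\sqrt{2}\,\varepsilon_i(R)\to 0$.

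The second step transfers this to the parametrizations. I would revisit the construction of $e$ from the proof of Theorem~\ref{th-rprez-glo} (following \cite[Thm.~9.7.1]{A-F}) and record that, applied to two closed convex sets that are $\delta$-close in Hausdorff distance, it produces maps $e_i(t,x,\cdot),e(t,x,\cdot)\colon\R^{n+1}\to\R^{n+1}$ with $\sup_{a\in\R^{n+1}}|e_i(t,x,a)-e(t,x,a)|\leq\omega(\delta)$ for a modulus $\omega$ depending only on $n$ (independent of $t,x,i$), where $\delta=\mathscr{H}(E_{L_i}(t,x),E_L(t,x))$. The key structural point is that a \emph{finite} Hausdorff distance between $E_{L_i}(t,x)$ and $E_L(t,x)$ forces the two sets to share the same recession cone, so that for large $|a|$ the values $e_i(t,x,a)$ and $e(t,x,a)$ are governed by identical recession data and stay $O(\delta)$-close, while for $a$ in a bounded region one uses the standard Hausdorff-stability of the construction. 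Combining with Step~1,
\begin{equation*}
\sup_{(t,x,a)\in[0,T]\times\B_R\times\R^{n+1}}|e_i(t,x,a)-e(t,x,a)|\leq\omega\bigl(\sqrt{2}\,\varepsilon_i(R)\bigr)\xrightarrow[i\to\infty]{}0,
\end{equation*}
and reading off the components $f=\pi_v\circ e$, $l=\pi_\eta\circ e$ gives the two asserted limits. The addendum under \te{(H4)} is immediate, since the estimate \eqref{th-rprez-glo-ww} is built into the construction of Theorem~\ref{th-rprez-glo}, so each $f_i$ and $f$ automatically satisfy $|f_i(t,x,a)|\leq c_i(t)(1+|x|)$ and $|f(t,x,a)|\leq c(t)(1+|x|)$.

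The main obstacle is Step~2: establishing the uniform-in-$a$ Hausdorff stability of the parametrization over the whole noncompact control set $\R^{n+1}$. Pointwise (in $a$) stability together with equi-Lipschitz behaviour—all that Theorem~\ref{thm-rep-stab1} required—does not by itself yield uniformity as $|a|\to\infty$; indeed, a naive metric-projection parametrization fails to be uniformly stable, since the difference of projection points onto two $\delta$-close sets can grow like $\sqrt{\delta\,(d(a,E_{L_i})+d(a,E_L))}$. What is genuinely needed is both the quantitative rate from Step~1 and the observation that matching recession cones control the construction at infinity, and verifying the latter requires entering the explicit formula for $e$ used in the proof of Theorem~\ref{th-rprez-glo} rather than treating it as a black box.
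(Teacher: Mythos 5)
Your overall architecture is the same as the paper's: first upgrade \eqref{thm-rep-stab5-11} to Hausdorff convergence of $E_{L_i}(t,x)$ to $E_L(t,x)$, uniform over $[0,T]\times\B_R$, and then transfer this to the parametrizations uniformly in $a\in\R^{n+1}$. Your Step~1 is correct, and in fact more quantitative than what the paper records: the paper's Proposition~\ref{gklconv} is obtained by rerunning the duality arguments of Propositions~\ref{0rwwl1} and~\ref{0lnrwwl1} (yielding an inclusion-type bound), whereas your support-function identity $\sigma_{E_L(t,x)}(p,-s)=s\,H(t,x,p/s)$ combined with $d(z,S)=\sup_{|u|\leq 1}(\langle u,z\rangle-\sigma_S(u))$ gives the explicit rate $\sqrt{2}\,\varepsilon_i(R)$. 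Both routes are legitimate, and your handling of the boundary directions $s=0$ and of the \te{(H4)} addendum is fine.

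The genuine gap is in Step~2. The estimate you need, $\sup_{a\in\R^{n+1}}|e_i(t,x,a)-e(t,x,a)|\leq\omega\bigl(\mathscr{H}(E_{L_i}(t,x),E_L(t,x))\bigr)$, is true with the \emph{linear} modulus $\omega(\delta)=5(n+1)\delta$, but the mechanism you propose does not prove it. That two closed convex sets at finite Hausdorff distance share a recession cone is correct, yet "identical recession data" gives no quantitative control of the sets $E_L(t,x)\cap\B\bigl(a,2d(a,E_L(t,x))\bigr)$ for large $|a|$: intersecting two $\delta$-close convex sets with two huge, nearly tangent balls can a priori displace points by far more than $O(\delta)$, and nothing in the recession-cone observation excludes this. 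What actually closes the step is already in the paper, and it involves no analysis at infinity at all: Lemma~\ref{lem-pmh} asserts that $P(y,K)=K\cap\B(y,2d(y,K))$ satisfies $\mathscr{H}(P(y,K),P(y,D))\leq 5\,\mathscr{H}(K,D)$ \emph{uniformly in the base point} $y$ (the doubled radius is precisely what makes the perturbation relative rather than absolute), and Lemma~\ref{lem-scmh} says the Steiner point $s_{n+1}$ is $(n+1)$-Lipschitz in $\mathscr{H}$. Since in the noncompact-control construction of Theorem~\ref{th-rprez-glo} one has $M\equiv 1$, so that $e(t,x,a)=s_{n+1}\bigl(P(a,E_L(t,x))\bigr)$, these two lemmas give exactly inequality \eqref{stbprof2} with the $|a|$-term absent, i.e.\ $|e_i(t,x,a)-e(t,x,a)|\leq 5(n+1)\,\mathscr{H}(E_{L_i}(t,x),E_L(t,x))$ for every $a\in\R^{n+1}$; this is the paper's Theorem~\ref{thm-zjnzz26}, and combined with your Step~1 it finishes the proof. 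So the intermediate claims you isolate are the right ones, but as written the decisive uniform-in-$a$ stability is asserted on the strength of a heuristic that would not withstand scrutiny, rather than proved; replacing the recession-cone argument by the citation of Lemmas~\ref{lem-pmh} and~\ref{lem-scmh} repairs it.
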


\begin{Th}\label{thm-rep-stab6}
Let $H_i,H:[0,T]\times\R^n\times\R^n\to\R$, $i\in\N$ satisfy \te{(H1)$-$(H3)}, \eqref{hip_l1}. We consider the representations $(\R^{n+1}\!\!,f_i,l_i)$ and $(\R^{n+1}\!\!,f,l)$ of $H_i$ and $H$, respectively,  defined as in the proof of Theorem~\ref{th-rprez-glo}. If $H_i$ and  $H$ satisfy the condition
\begin{equation}
\sup_{(x,p)\,\in\,\B_R\times\R^n}\frac{|H_i(t,x,p)-H(t,x,p)|}{1+|p|}\;\xrightarrow[i\to\infty]{}\;0,\quad\forall\;t\in[0,T],\;\forall\;R>0,
\end{equation}
then $f_i,f$ and $l_i,l$ satisfy the conditions
\begin{equation*}
\left\{\begin{array}{l}
\sup\limits_{(x,a)\,\in\,\B_R\times\R^{n+1}}|f_i(t,x,a)-f(t,x,a)|\;\xrightarrow[i\to\infty]{}\;0,\quad\forall\;t\in[0,T],\;\forall\;R>0,\\[3mm]
\sup\limits_{(x,a)\,\in\,\B_R\times\R^{n+1}}\;\;|l_i(t,x,a)-l(t,x,a)|\;\xrightarrow[i\to\infty]{}\;0,\quad\forall\;t\in[0,T],\;\forall\;R>0.
\end{array}\right.
\end{equation*}
 Furthermore, if $ H_i, H, i\in\N$ satisfy \te{(H4)}, then $ f_i, f, i\in\N$ satisfy \eqref{th-rprez-glo-ww}.
\end{Th}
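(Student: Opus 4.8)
The plan is to reduce the statement, at a fixed time, to the stability of the epigraphical parametrization underlying the representation, feeding that stability with epigraph convergence extracted from Wijsman's theorem. Fix $t\in[0,T]$ and $R>0$. Recall from \eqref{def-fl} that $f_i=\pi_v\circ e_i$, $l_i=\pi_\eta\circ e_i$, $f=\pi_v\circ e$ and $l=\pi_\eta\circ e$, where $e_i(t,\cdot,\cdot)$ and $e(t,\cdot,\cdot)$ are the single-valued maps parametrizing the epigraphical set-valued maps $E_{L_i}(t,\cdot)$ and $E_{L}(t,\cdot)$ built in the proof of Theorem~\ref{th-rprez-glo}, with $L_i=H_i^\ast$ and $L=H^\ast$ (Legendre--Fenchel transforms in the last variable). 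Since $\pi_v,\pi_\eta$ are $1$-Lipschitz, it is enough to prove
\[
\sup_{(x,a)\in\B_R\times\R^{n+1}}|e_i(t,x,a)-e(t,x,a)|\xrightarrow[i\to\infty]{}0 .
\]

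First I would convert the hypothesis into convergence of the epigraphs. For each fixed $x\in\B_R$ the functions $H_i(t,x,\cdot)$, $H(t,x,\cdot)$ are finite and convex, and the weighted bound $\sup_{(x,p)\in\B_R\times\R^n}|H_i(t,x,p)-H(t,x,p)|/(1+|p|)\to0$ gives, on every compact set of $p$, plain uniform convergence $H_i(t,x,\cdot)\to H(t,x,\cdot)$, uniformly for $x\in\B_R$; for finite convex functions this is continuous convergence, hence epi-convergence. By Wijsman's theorem \cite[Thm. 11.34]{R-W} the conjugates epi-converge as well, i.e. $L_i(t,x,\cdot)\xrightarrow{e}L(t,x,\cdot)$, which is exactly Painlev\'e--Kuratowski (equivalently, in $\R^{n+1}$, Wijsman) convergence of the epigraphs $E_{L_i}(t,x)\to E_{L}(t,x)$, and the whole passage stays uniform in $x\in\B_R$ because the weighted bound is.

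Next I would transport this set convergence through the construction of $e$. The map $e(t,x,\cdot)$ is produced from the value $E_{L}(t,x)$ by the explicit, set-stable recipe of the proof of Theorem~\ref{th-rprez-glo}, and $e_i$ by the same recipe applied to $E_{L_i}$; consequently the convergence $E_{L_i}(t,x)\to E_{L}(t,x)$, uniform in $x\in\B_R$, propagates to $e_i(t,x,a)\to e(t,x,a)$, uniformly in $(x,a)$. The decisive point---and the reason the Hamiltonians are compared in the weighted norm $|H_i-H|/(1+|p|)$ rather than merely on compacts---is that this weight controls the epigraphs $E_{L_i}(t,x)$ uniformly out to infinity, which is precisely what upgrades the convergence of $e_i(t,x,\cdot)$ from ``uniform on compact sets of controls'' to ``uniform over the whole control set $\R^{n+1}$''. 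Applying $\pi_v$ and $\pi_\eta$ then yields the two displayed limits; this is the same mechanism as in the proof of Theorem~\ref{thm-rep-stab5}, now carried out at the fixed time $t$.

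The supplementary claim is immediate: if in addition (H4) holds, then by Theorem~\ref{th-rprez-glo} we have $f(t,x,A)=\D L(t,x,\cdot)$, and together with (M5) this forces \eqref{th-rprez-glo-ww} for both $f_i$ and $f$. The main obstacle is the third step, namely establishing genuine uniformity of $e_i\to e$ over all of $\R^{n+1}$ rather than only on compact control sets; this rests on the uniform-to-infinity control of the epigraphs furnished by the weighted hypothesis and on the explicit stability, with respect to set-convergence of the values, of the parametrization constructed in the proof of Theorem~\ref{th-rprez-glo}.
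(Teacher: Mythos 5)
Your reduction to uniform convergence of $e_i$ to $e$ and your treatment of the final (H4) claim are fine, but there is a genuine gap at the decisive step, and it is exactly the step you flag as ``the main obstacle.'' Your formal route runs: weighted hypothesis $\Rightarrow$ uniform convergence of $H_i(t,\cdot,\cdot)$ on compacts $\Rightarrow$ epi-convergence $\Rightarrow$ (Wijsman) epi-convergence of the conjugates $L_i(t,x,\cdot)$, i.e.\ Painlev\'e--Kuratowski convergence of the epigraphs $E_{L_i}(t,x)\to E_L(t,x)$; and then you assert this ``propagates'' through the parametrization to convergence of $e_i$ uniform in $(x,a)\in\B_R\times\R^{n+1}$. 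That propagation fails: for unbounded sets, Painlev\'e--Kuratowski convergence is strictly weaker than convergence in the Hausdorff distance $\mathscr{H}$, and pushing PK convergence through the construction (intersection with the ball $G$, then the Steiner point, via Theorem~\ref{thmozkp} and \eqref{zmkh}) yields only sequential convergence $e_i(t,x_i,a_i)\to e(t,x,a)$, i.e.\ uniformity on \emph{compact} sets of controls --- this is precisely the mechanism of Theorem~\ref{thm-zjnzz}, which is why Theorems \ref{thm-rep-stab1}--\ref{thm-rep-stab4} only conclude uniform convergence on compacts in $a$. Your remark that the weight ``controls the epigraphs uniformly out to infinity'' is the right intuition, but Wijsman's theorem discards exactly that quantitative information, and you supply no mechanism converting the weighted bound into an estimate that is uniform over all of $\R^{n+1}$ in $a$.

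The paper's proof of this theorem does not use Wijsman at all. It uses two quantitative ingredients. First, Proposition~\ref{gklconv} (proved like Propositions \ref{0rwwl1} and \ref{0lnrwwl1}): by conjugate duality, a bound $|H_i(t,x,p)-H(t,x,p)|\le\varepsilon_i(1+|p|)$ valid for all $p\in\R^n$ is equivalent to the two-sided epigraph inclusion
\begin{equation*}
E_L(t,x)\subset E_{L_i}(t,x)+(\varepsilon_i\B)\times(\varepsilon_i[-1,1]),\qquad
E_{L_i}(t,x)\subset E_{L}(t,x)+(\varepsilon_i\B)\times(\varepsilon_i[-1,1]),
\end{equation*}
hence $\sup_{x\in\B_R}\mathscr{H}(E_{L_i}(t,x),E_L(t,x))\to 0$ --- genuine Hausdorff convergence, uniform in $x$, which PK convergence does not give. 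Second, the estimate \eqref{stbprof2}, coming from Lemmas \ref{lem-pmh} and \ref{lem-scmh}: since the construction of Theorem~\ref{th-rprez-glo} uses $M_i=M\equiv 1$, the centers $M_i(t,x)\,a$ and $M(t,x)\,a$ coincide and
\begin{equation*}
|e_i(t,x,a)-e(t,x,a)|\le 5(n+1)\,\mathscr{H}\bigl(E_{L_i}(t,x),E_L(t,x)\bigr)
\end{equation*}
holds with a right-hand side independent of $a$; this single inequality is what upgrades the convergence to uniformity over the whole control set $\R^{n+1}$ (Theorem~\ref{thm-zjnzz26}, applied at fixed $t$). Without replacing your Wijsman step by the Hausdorff estimate of Proposition~\ref{gklconv} and invoking \eqref{stbprof2}, the claimed uniformity in $a$ is not established.
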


The proofs of Theorems \ref{thm-rep-stab5}, \ref{thm-rep-stab6} are also contained in
Section \ref{thms-stab}.

%%%%%%%%%%%%%%%%%%%%%%%%%%%%%%%%%%%%%%%%%%%%%%%%%%%%%%%%%%%%%%%%%%%%%%%%%%%%%%
%%%%%%%%%%%%%%%%%%%%%%%%%%%%%%%%%%%%%%%%%%%%%%%%%%%%%%%%%%%%%%%%%%%%%%%%%%%%%%
%%%%%%%%%%%%%%%%%%%%%%%%%%%%%%%%%%%%%%%%%%%%%%%%%%%%%%%%%%%%%%%%%%%%%%%%%%%%%%
%%%%%%%%%%%%%%%%%%%%%%%%%%%%%%%%%%%%%%%%%%%%%%%%%%%%%%%%%%%%%%%%%%%%%%%%%%%%%%

\section{Proof of Theorem \ref{podr22_th_wk}}\label{wk-kon-istr}

\noindent Before we prove Theorem \ref{podr22_th_wk} we state and prove three auxiliary lemmas.

\begin{Lem}\label{lem-tran-r1}
We suppose that the set $A$ is nonempty and compact. Let  $f:[0,T]\times\R^n\times A\rightarrow\R^n$ and\linebreak $l:[0,T]\times\R^n\times A\rightarrow\R$ be $a-$continuous functions for every $t\in[0,T]$, $x\in\R^n$. If  $H:[0,T]\times\R^n\times\R^n\rightarrow\R$ is given by the formula
\begin{equation*}
 H(t,x,p):= \sup_{a\in A}\,\{\,\langle\, p,f(t,x,a)\,\rangle-l(t,x,a)\,\},
\end{equation*}
then for all $t\in[0,T]$, $x\in\R^n$, $a\in A$ we have $L(t,x,f(t,x,a))\leq l(t,x,a)$, where $L(t,x,\cdot\,):=H^{\ast}(t,x,\cdot\,)$.
\end{Lem}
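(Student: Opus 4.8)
The plan is to prove the inequality $L(t,x,f(t,x,a))\leq l(t,x,a)$ directly from the definition of the Legendre--Fenchel transform, using only the supremum structure of $H$. The key observation is that for a fixed control value $a\in A$, the single pair $(f(t,x,a),l(t,x,a))$ already produces one of the affine minorants of $H(t,x,\cdot)$, and this minorant is exactly what bounds the conjugate from above.

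First I would fix $t\in[0,T]$, $x\in\R^n$ and $a\in A$, and note that by the defining formula for $H$ we have, for every $p\in\R^n$,
\begin{equation*}
H(t,x,p)\;\geq\;\langle\, p,f(t,x,a)\,\rangle-l(t,x,a),
\end{equation*}
simply because $H(t,x,p)$ is the supremum over all control values and $a$ is one admissible choice. Rearranging this inequality gives $\langle\, p,f(t,x,a)\,\rangle-H(t,x,p)\leq l(t,x,a)$ for every $p\in\R^n$. Taking the supremum over $p\in\R^n$ on the left-hand side, and recalling that $L(t,x,\cdot\,)=H^{\ast}(t,x,\cdot\,)$ is defined by \eqref{tran1} as
\begin{equation*}
L(t,x,v)=\sup_{p\in\R^n}\,\{\,\langle v,p\rangle-H(t,x,p)\,\},
\end{equation*}
we obtain precisely $L(t,x,f(t,x,a))\leq l(t,x,a)$, which is the claim.

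I expect there to be essentially no serious obstacle here: the statement is a one-line consequence of the elementary fact that a conjugate function is bounded above by any value that validates the Fenchel--Young inequality. The compactness of $A$ and the $a$-continuity of $f,l$, although assumed in the lemma, are not actually needed for this particular inequality — they only guarantee that the supremum defining $H$ is attained and finite, so that $H$ is well defined and real-valued, which justifies writing the conjugate at all. The only point worth stating carefully is that the supremum over $p$ is taken after fixing the admissible minorant, so that the bound $l(t,x,a)$ is independent of $p$ and survives the passage to the supremum.
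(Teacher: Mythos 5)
Your proof is correct, and it takes a genuinely different and more elementary route than the paper's. The paper proceeds by contradiction: assuming $l(t,x,\kr{a})<L(t,x,f(t,x,\kr{a}))$ for some $\kr{a}\in A$, it observes that the point $(f(t,x,\kr{a}),l(t,x,\kr{a}))$ lies outside $\E L(t,x,\cdot)$, which is nonempty, closed and convex because $L(t,x,\cdot)$ is proper, convex and lower semicontinuous by \cite[Thm. 11.1]{R-W}; it then applies the Epigraph Separation Theorem to produce $\kr{p}\in\R^n$ with
\begin{equation*}
\sup_{(v,\eta)\in\E L(t,x,\cdot)}\langle\,(v,\eta),(\kr{p},-1)\,\rangle\;<\;\langle\,\kr{p},f(t,x,\kr{a})\,\rangle-l(t,x,\kr{a}),
\end{equation*}
and, using the biconjugation identity $H(t,x,\cdot\,)=L^{\ast}(t,x,\cdot\,)$, derives the contradiction $H(t,x,\kr{p})<H(t,x,\kr{p})$. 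Your argument dispenses with all of this machinery: since $a$ itself is an admissible index in the supremum defining $H$, the function $H(t,x,\cdot)$ dominates the affine function $p\mapsto\langle\, p,f(t,x,a)\,\rangle-l(t,x,a)$, so every term $\langle f(t,x,a),p\rangle-H(t,x,p)$ in the supremum defining $L(t,x,f(t,x,a))=H^{\ast}(t,x,f(t,x,a))$ is at most $l(t,x,a)$, and the claim follows at once from \eqref{tran1}. What your route buys is generality and economy: it needs neither compactness of $A$, nor $a$-continuity of $f,l$, nor convexity or lower semicontinuity of $H(t,x,\cdot)$, nor Fenchel--Moreau, nor any separation theorem --- only the definition of the conjugate --- and your closing remark that compactness and continuity serve only to make the supremum finite and attained (so that the lemma's hypothesis $H:[0,T]\times\R^n\times\R^n\to\R$ is consistent) is accurate. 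What the paper's heavier route buys is mainly uniformity of technique: essentially the same separation argument is the indispensable tool in the companion Lemma \ref{lem-tran-r2}, where the inclusion $\D L(t,x,\cdot)\subset f(t,x,A)$ genuinely requires separating a point from the compact convex set $f(t,x,A)$; for the present lemma, however, that machinery is dispensable, as your one-line argument shows.
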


\begin{proof}
We assume, by contradiction, that the assertion is false. Then there exist  $t\in[0,T]$, $x\in\R^n$, $\kr{a}\in A$ such that  $l(t,x,\kr{a})<L(t,x,f(t,x,\kr{a}))$. Therefore $(f(t,x,\kr{a}), l(t,x,\kr{a}))\not\in\E L(t,x,\cdot)$. The function
$p\rightarrow H(t,x,p)$ is finite and convex, so by \cite[Thm. 11.1]{R-W} the function $v\rightarrow L(t,x,v)$ is proper, convex, lower semicontinuous and  $H(t,x,\cdot\,)=L^{\ast}(t,x,\cdot\,)$. Hence the set $\E L(t,x,\cdot)$
is nonempty, closed and convex. By Epigraph Separation Theorem, there exists $\kr{p}\in\R^n$ such that
\begin{equation}\label{stod-dlae}
\sup_{(v,\eta)\in\E L(t,x,\cdot)} \langle\, (v,\eta),(\kr{p},-1)\,\rangle \;<\; \langle\, (f(t,x,\kr{a}), l(t,x,\kr{a})),(\kr{p},-1)\,\rangle.
\end{equation}
If  $v\in\D L(t,x,\cdot)$, then $(v,L(t,x,v))\in\E L(t,x,\cdot)$. Thus, by the inequality
 (\ref{stod-dlae}) and the equality $H(t,x,\cdot\,)=L^{\ast}(t,x,\cdot\,)$ we obtain
\begin{eqnarray*}
H(t,x,\kr{p}) &=& \sup_{v\in\mathbb{R}^{n}}\,\{\,\langle \kr{p},v\rangle-L(t,x,v)\,\}\;=\;\sup_{v\in\D L(t,x,\cdot)}\,\{\,\langle \kr{p},v\rangle-L(t,x,v)\,\}\\
&=&\sup_{v\in\D L(t,x,\cdot)}\langle (v,L(t,x,v)),(\kr{p},-1)\rangle\;\leq\; \sup_{(v,\eta)\in\E L(t,x,\cdot)} \langle\, (v,\eta),(\kr{p},-1)\,\rangle\\
&<& \langle\, (f(t,x,\kr{a}), l(t,x,\kr{a})),(\kr{p},-1)\,\rangle\;=\; \langle\, \kr{p},f(t,x,\kr{a})\,\rangle-l(t,x,\kr{a})\\
&\leq & H(t,x,\kr{p}).
\end{eqnarray*}
Thus, we obtain a contradiction that ends the proof.
\end{proof}

\begin{Lem}\label{lem-tran-r2}
We assume that the set $A$ is nonempty and compact. Let  $f:[0,T]\times\R^n\times A\rightarrow\R^n$ and $l:[0,T]\times\R^n\times A\rightarrow\R$ be an $a-$continuous function
for every $t\in[0,T]$, $x\in\R^n$. We assume that the set $f(t,x,A)$ is convex for every
$t\in[0,T]$, $x\in\R^n$. If  $H:[0,T]\times\R^n\times\R^n\rightarrow\R$
is given by the formula
\begin{equation*}
 H(t,x,p):= \sup_{a\in A}\,\{\,\langle\, p,f(t,x,a)\,\rangle-l(t,x,a)\,\},
\end{equation*}
then for all $t\in[0,T]$, $x\in\R^n$ we have $f(t,x,A)=\D L(t,x,\cdot)$, where $L(t,x,\cdot\,):=H^{\ast}(t,x,\cdot\,)$.
\end{Lem}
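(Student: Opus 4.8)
The plan is to fix $t\in[0,T]$ and $x\in\R^n$ and prove the two inclusions separately, writing $C:=f(t,x,A)$. Since $A$ is compact and $a\rightarrow f(t,x,a)$ is continuous, $C$ is a nonempty compact set, and it is convex by hypothesis; in particular $C$ is closed. The inclusion $f(t,x,A)\subset\D L(t,x,\cdot)$ is then immediate from Lemma \ref{lem-tran-r1}: for every $a\in A$ that lemma gives $L(t,x,f(t,x,a))\leq l(t,x,a)<+\infty$, hence $f(t,x,a)\in\D L(t,x,\cdot)$. This routine inclusion is essentially all that Lemma \ref{lem-tran-r1} already contains.

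For the reverse inclusion $\D L(t,x,\cdot)\subset C$ I would show that $L(t,x,v)=+\infty$ whenever $v\notin C$. First, since $l(t,x,\cdot)$ is continuous on the compact set $A$, the number $m:=\inf_{a\in A}l(t,x,a)$ is finite, so for every $p\in\R^n$
\begin{equation*}
H(t,x,p)\;\leq\;\sup_{a\in A}\langle\,p,f(t,x,a)\,\rangle-m\;=\;\sigma(p)-m,
\end{equation*}
where $\sigma(p):=\sup_{u\in C}\langle\,p,u\,\rangle$ is the support function of $C$. Now fix $v\notin C$. Because $C$ is nonempty, compact and convex, the strong separation theorem yields $p_0\in\R^n$ and $\varepsilon>0$ with $\langle\,p_0,v\,\rangle\geq\sigma(p_0)+\varepsilon$. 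Using positive homogeneity $\sigma(sp_0)=s\,\sigma(p_0)$ for $s>0$ together with the bound on $H$, we get for every $s>0$
\begin{equation*}
L(t,x,v)\;=\;\sup_{p\in\R^n}\{\langle\,p,v\,\rangle-H(t,x,p)\}\;\geq\;\langle\,sp_0,v\,\rangle-H(t,x,sp_0)\;\geq\;s\,\varepsilon+m.
\end{equation*}
Letting $s\to+\infty$ forces $L(t,x,v)=+\infty$, i.e. $v\notin\D L(t,x,\cdot)$, which proves $\D L(t,x,\cdot)\subset C$.

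Combining the two inclusions yields $f(t,x,A)=\D L(t,x,\cdot)$ for all $t,x$, as claimed. The step I expect to be the main obstacle is the reverse inclusion, where one must rule out that the effective domain of $L$ extends beyond the image $f(t,x,A)$; the mechanism that makes it work is that $H(t,x,\cdot)$ grows no faster than the support function of the compact set $C$, so a strict separation of a point $v\notin C$ can be amplified by scaling the separating direction until the conjugate $L(t,x,v)$ blows up. Here compactness of $C$ is used to guarantee that $\sigma$ is finite-valued and that strong separation is available, while convexity of $f(t,x,A)$ is precisely what closes the gap to an equality: without it one would only obtain $\overline{\mathrm{co}}\,f(t,x,A)\subset\D L(t,x,\cdot)$, since $\D L(t,x,\cdot)$ is always convex.
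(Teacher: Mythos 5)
Your proposal is correct and follows essentially the same route as the paper's own proof: the inclusion $f(t,x,A)\subset\D L(t,x,\cdot)$ via Lemma \ref{lem-tran-r1}, and the reverse inclusion by strictly separating a point $v\notin f(t,x,A)$ from the nonempty compact convex image and then scaling the separating functional, with finiteness of $\inf_{a\in A}l(t,x,a)$ controlling the error term. The only cosmetic difference is that the paper packages the scaling as a contradiction with a single sufficiently large multiple of the separating vector (and uses attainment of the supremum defining $H$ on the compact set $A$), whereas you bound $H$ by the support function of $f(t,x,A)$ and let the scale tend to infinity to conclude $L(t,x,v)=+\infty$ directly.
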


\begin{proof}
We put $t\in[0,T]$ and $x\in\R^n$. The function $p\rightarrow H(t,x,p)$ is finite and convex,
so by \cite[Thm. 11.1]{R-W}, the function $v\rightarrow L(t,x,v)$ is proper, convex and
lower semicontinuous. By Lemma \ref{lem-tran-r1} we have the inequality $L(t,x,f(t,x,a))\leq l(t,x,a)$ for every  $a\in A$. Thus, by the property of the function
$v\rightarrow L(t,x,v)$ we obtain $f(t,x,A)\subset \D L(t,x,\cdot)$. Now we show that $\D L(t,x,\cdot)\subset f(t,x,A)$. We suppose that this inclusion is false.
Then there exist $\kr{v}\in \D L(t,x,\cdot)$ and $\kr{v}\not\in f(t,x,A)$. The set $f(t,x,A)$ is nonempty, convex and compact, so by the Separation Theorem,
there exist $\kr{p}\in\R^n$ and numbers $\alpha,\beta\in\R$ such that
\begin{equation*}
 \langle\,\kr{v},\kr{p} \,\rangle\leq \alpha<\beta\leq \langle\, f(t,x,a),\kr{p}\,\rangle,  \;\;\;\forall\;a\in A.
\end{equation*}
We notice that by the above inequality we obtain the inequality
\begin{equation}\label{stod-dlae0101}
 \beta-\alpha\;\leq\; \langle\, f(t,x,a)-\kr{v},\kr{p}\,\rangle,  \;\;\;\forall\;a\in A.
\end{equation}
We set $\xi(t,x):=\inf_{a\in A}l(t,x,a)$. Let $\kr{n}\in N$ be large enough that the following inequality holds
\begin{equation}\label{stod-dlae01011}
L(t,x,\kr{v})-\xi(t,x)\;<\;\kr{n}\cdot(\beta-\alpha).
\end{equation}
Using assumptions, we get for $\,\kr{q}:=-\kr{n}\cdot\kr{p}\,$  the existence of  $\,a_{\kr{q}}\in A\,$ with
\begin{eqnarray}\label{stod-dlae01012}
\nonumber H(t,x,\kr{q}) &=&\sup_{a\in A}\,\{\,\langle\, \kr{q},f(t,x,a)\,\rangle-l(t,x,a)\,\}\\
&=& \langle\, \kr{q},f(t,x,a_{\kr{q}})\,\rangle-l(t,x,a_{\kr{q}}).
\end{eqnarray}
Putting together the inequalities  (\ref{stod-dlae01011}), (\ref{stod-dlae01012}) and (\ref{stod-dlae0101}), we obtain
\begin{eqnarray*}
\kr{n}\cdot(\beta-\alpha) &>& L(t,x,\kr{v})-\xi(t,x)\\
&=& \sup_{p\in\mathbb{R}^{n}}\,\{\,\langle \kr{v},p\rangle-H(t,x,p)\,\}-\xi(t,x)\\
&\geq & \langle\,\kr{v},\kr{q}\,\rangle - H(t,x,\kr{q})-\xi(t,x)\\
&= & \langle\,\kr{v},\kr{q}\,\rangle -\langle\, \kr{q},f(t,x,a_{\kr{q}})\,\rangle +l(t,x,a_{\kr{q}})-\xi(t,x)\\
&\geq & \langle\, \kr{v}-f(t,x,a_{\kr{q}}),\kr{q}\,\rangle\\
&=& \langle\, \kr{v}-f(t,x,a_{\kr{q}}),-\kr{n}\cdot\kr{p}\,\rangle\\
&=& \kr{n}\cdot\langle\, f(t,x,a_{\kr{q}})-\kr{v},\kr{p}\,\rangle\\
&\geq & \kr{n}\cdot(\beta-\alpha).
\end{eqnarray*}
Thus, we obtain a contradiction that ends the proof.
\end{proof}

\begin{Lem}\label{lem-tran-r1w1}
We suppose that the set $A$ is nonempty and compact. Let  $f:[0,T]\times\R^n\times A\rightarrow\R^n$ and $l:[0,T]\times\R^n\times A\rightarrow\R$ be $t-$measurable
functions for all $(x,a)\in\R^n\times A$ and $(x,a)-$continuous for all $t\in[0,T]$.
If  $H:[0,T]\times\R^n\times\R^n\rightarrow\R$ is given by the formula
\begin{equation*}
 H(t,x,p):= \sup_{a\in A}\,\{\,\langle\, p,f(t,x,a)\,\rangle-l(t,x,a)\,\},
\end{equation*}
then there exist a nonempty, compact set $\mathbbmtt{A}$ and functions $\mathbbmtt{f}:[0,T]\times\R^n\times \mathbbmtt{A}\rightarrow\R^n$ and $\mathbbmtt{l}:[0,T]\times\R^n\times \mathbbmtt{A}\rightarrow\R$ measurable in $t$ for all $(x,\mathbbmtt{a})\in\R^n\times \mathbbmtt{A}$ and continuous in $(x,\mathbbmtt{a})$ for all $t\in[0,T]$ such that for every $t\in[0,T]$, $x\in\R^n$, $p\in\R^n$ we have
\begin{equation}\label{rwr-lem010}
 H(t,x,p)=\sup_{\mathbbmtt{a}\in \mathbbmtt{A}}\,\{\,\langle\, p,\mathbbmtt{f}(t,x,\mathbbmtt{a})\,\rangle-\mathbbmtt{l}(t,x,\mathbbmtt{a})\,\}.
\end{equation}

Besides, for all $t\in[0,T]$, $x\in\R^n$
\begin{eqnarray}\label{rwr-lem0}
\mathbbmtt{f}(t,x,\mathbbmtt{A})=\te{conv}f(t,x,A),\qquad \mathbbmtt{l}(t,x,\mathbbmtt{A})=\te{conv}\,l(t,x,A),
\end{eqnarray}

If for any $R>0$ there exists modulus $w_R(\cdot,\cdot)$ and null set $\mathcal{N}_R$ such that $|l(t,x,a)-l(t,y,a)|\leq w_R(t,|x-y|)$ for all $t\in[0,T]\setminus\mathcal{N}_R$, $x\in\B_R$, $a\in A$,\, then\, $|\mathbbmtt{l}(t,x,\mathbbmtt{a})-\mathbbmtt{l}(t,y,\mathbbmtt{a})|\leq w_R(t,|x-y|)$ for every $t\in[0,T]\setminus\mathcal{N}_R$, $x\in\B_R$, $\mathbbmtt{a}\in \mathbbmtt{A}$ and $R>0$.

Besides, if functions  $f,l$ are continuous, then functions $\mathbbmtt{f},\mathbbmtt{l}$
are also continuous.
\end{Lem}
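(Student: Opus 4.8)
The plan is to realize $H$ as the support function of a convexified parameter family, taking $\mathbbmtt{A}$ to be a simplex-weighted product of copies of $A$. Concretely, I would let $\Delta := \{\lambda = (\lambda_1,\dots,\lambda_{n+2})\in\R^{n+2} : \lambda_j\geq 0,\ \sum_{j=1}^{n+2}\lambda_j = 1\}$ be the standard simplex and set $\mathbbmtt{A} := A^{n+2}\times\Delta$, which is nonempty and compact because $A$ and $\Delta$ are. Writing a point of $\mathbbmtt{A}$ as $\mathbbmtt{a} = (a_1,\dots,a_{n+2},\lambda_1,\dots,\lambda_{n+2})$, I would define
\begin{equation*}
\mathbbmtt{f}(t,x,\mathbbmtt{a}) := \sum_{j=1}^{n+2}\lambda_j\, f(t,x,a_j), \qquad \mathbbmtt{l}(t,x,\mathbbmtt{a}) := \sum_{j=1}^{n+2}\lambda_j\, l(t,x,a_j).
\end{equation*}

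First I would record the regularity. For fixed $(x,\mathbbmtt{a})$ the map $t\mapsto\mathbbmtt{f}(t,x,\mathbbmtt{a})$ is a fixed linear combination of the measurable maps $t\mapsto f(t,x,a_j)$, hence measurable, and likewise for $\mathbbmtt{l}$. For fixed $t$, the map $(x,\mathbbmtt{a})\mapsto\mathbbmtt{f}(t,x,\mathbbmtt{a})$ is a finite sum of products of the continuous maps $(x,a_j)\mapsto f(t,x,a_j)$ with the continuous coordinate functions $\mathbbmtt{a}\mapsto\lambda_j$, hence continuous; the same argument, applied to the jointly continuous $f,l$, yields joint continuity of $\mathbbmtt{f},\mathbbmtt{l}$ in $(t,x,\mathbbmtt{a})$. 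This settles the regularity claims.

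Next I would verify the representation \eqref{rwr-lem010}. Since each summand satisfies $\langle p,f(t,x,a_j)\rangle - l(t,x,a_j)\leq H(t,x,p)$ and $\sum_j\lambda_j = 1$, convexity gives $\langle p,\mathbbmtt{f}(t,x,\mathbbmtt{a})\rangle - \mathbbmtt{l}(t,x,\mathbbmtt{a})\leq H(t,x,p)$ for every $\mathbbmtt{a}$, which is the inequality ``$\leq$''. For ``$\geq$'' I would restrict to the diagonal $a_1=\dots=a_{n+2}=a$ with arbitrary $\lambda\in\Delta$, where the expression collapses to $\langle p,f(t,x,a)\rangle - l(t,x,a)$; taking the supremum over $a\in A$ recovers $H(t,x,p)$. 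Hence the two suprema coincide.

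For the convex-hull identities \eqref{rwr-lem0}, I observe that $\mathbbmtt{f}(t,x,\mathbbmtt{A})$ is precisely the set of convex combinations of at most $n+2$ points of $f(t,x,A)\subset\R^n$; by Carathéodory's theorem, which requires at most $n+1\leq n+2$ points in $\R^n$, this set is exactly $\te{conv}\,f(t,x,A)$, and the identical argument gives $\mathbbmtt{l}(t,x,\mathbbmtt{A}) = \te{conv}\,l(t,x,A)$. The inheritance of the modulus is then immediate: if $|l(t,x,a)-l(t,y,a)|\leq w_R(t,|x-y|)$ for all admissible $t,x,a$, then for every $\mathbbmtt{a}$,
\begin{equation*}
|\mathbbmtt{l}(t,x,\mathbbmtt{a})-\mathbbmtt{l}(t,y,\mathbbmtt{a})| \leq \sum_{j=1}^{n+2}\lambda_j\,|l(t,x,a_j)-l(t,y,a_j)| \leq \sum_{j=1}^{n+2}\lambda_j\, w_R(t,|x-y|) = w_R(t,|x-y|),
\end{equation*}
using $\sum_j\lambda_j = 1$. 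The construction involves no genuine obstacle; the only point requiring care is the Carathéodory dimension count, ensuring that $n+2$ weighted copies of $A$ already sweep out the full convex hull, so that the containments in \eqref{rwr-lem0} are in fact equalities.
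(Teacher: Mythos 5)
Your proposal is correct and takes essentially the same route as the paper: the paper also convexifies by setting $\mathbbmtt{A}:=A^{n+1}\times\Delta$ with simplex weights, defines $\mathbbmtt{f},\mathbbmtt{l}$ as the weighted sums, proves \eqref{rwr-lem010} by the same convexity bound for ``$\leq$'' and diagonal restriction for ``$\geq$'', and obtains \eqref{rwr-lem0} from Carath\'eodory's theorem, with the identical computation for the modulus. The only difference is cosmetic: the paper uses $n+1$ copies of $A$, which already suffices since $f(t,x,A)\subset\R^n$, whereas you use $n+2$.
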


\begin{proof}
We define a simplex in the space $\R^{n+1}$ by
$$\Delta:= \{(\alpha_1,\dots,\alpha_{n+1})\in[0,1]^{n+1}\mid\alpha_1+\dots+\alpha_{n+1}=1\}.$$
Obviously, the set $\Delta$ is compact. Moreover, we define the set $\,\mathbbmtt{A}\,$
by $\mathbbmtt{A}:=A^{n+1}\times\Delta$. We notice that $\mathbbmtt{A}$
is a compact subset of the space $\R^{2n+2}$. The functions $\mathbbmtt{f}$, $\mathbbmtt{l}$ are defined for every $t\in[0,T]$, $x\in\R^n$ and $\mathbbmtt{a}=(a_1,\dots,a_{n+1},\alpha_1,\dots,\alpha_{n+1})\in A^{n+1}\times\Delta=\mathbbmtt{A}$ by formulas:
$$\mathbbmtt{f}(t,x,\mathbbmtt{a}):= \sum_{i=1}^{n+1}\alpha_i\, f(t,x,a_i),\qquad \mathbbmtt{l}(t,x,\mathbbmtt{a}):= \sum_{i=1}^{n+1}\alpha_i\, l(t,x,a_i).$$
We notice that $\mathbbmtt{f}$, $\mathbbmtt{l}$ are  $t-$measurable for all $(x,\mathbbmtt{a})\in\R^n\times \mathbbmtt{A}$ and $(x,\mathbbmtt{a})-$continuous for all $t\in[0,T]$. Besides, if functions $f,l$ are continuous, then functions $\mathbbmtt{f},\mathbbmtt{l}$ are also continuous.

Now we prove that the triple  $(\mathbbmtt{A},\mathbbmtt{f},\mathbbmtt{l})$ satisfies
the equality \eqref{rwr-lem010}. To this purpose, we fix  $t\in[0,T]$, $x\in\R^n$ and $p\in\R^n$. Let $\widehat{\mathbbmtt{a}}:=(a,\dots,a,\alpha_1,\dots,\alpha_{n+1})\in \mathbbmtt{A}$.
Then by the definition of the triple $(\mathbbmtt{A},\mathbbmtt{f},\mathbbmtt{l})$ we
obtain
$$\mathbbmtt{f}(t,x,\widehat{\mathbbmtt{a}})= f(t,x,a),\qquad \mathbbmtt{l}(t,x,\widehat{\mathbbmtt{a}})= l(t,x,a).$$
By the above equalities
\begin{eqnarray*}
\nonumber \langle\, p,f(t,x,a)\,\rangle-l(t,x,a)  &=& \langle\, p,\mathbbmtt{f}(t,x,\widehat{\mathbbmtt{a}})\,\rangle -\mathbbmtt{l}(t,x,\mathbbmtt{\widehat{a}})\\
&\leq & \sup_{\mathbbmtt{a}\;\in\; \mathbbmtt{A}}\,\{\,\langle\, p,\mathbbmtt{f}(t,x,\mathbbmtt{a})\,\rangle -\mathbbmtt{l}(t,x,\mathbbmtt{a})\,\}
\end{eqnarray*}
for all   $a\in A$. Therefore we get the inequality
\begin{equation}\label{rwr-lem1}
H(t,x,p)\;\leq\;  \sup_{\mathbbmtt{a}\;\in\; \mathbbmtt{A}}\,\{\,\langle\, p,\mathbbmtt{f}(t,x,\mathbbmtt{a})\,\rangle -\mathbbmtt{l}(t,x,\mathbbmtt{a})\,\}.
\end{equation}
On the other hand, for every $\mathbbmtt{a}\in \mathbbmtt{A}$
\begin{eqnarray*}
\langle\, p,\mathbbmtt{f}(t,x,\mathbbmtt{a})\,\rangle -\mathbbmtt{l}(t,x,\mathbbmtt{a}) &= &\sum_{i=1}^{n+1}\alpha_i[\langle\, p,f(t,x,a_i)\,\rangle -l(t,x,a_i)] \\
&\leq & \sum_{i=1}^{n+1}\alpha_i H(t,x,p)\;=\;H(t,x,p).
\end{eqnarray*}
It means that the following inequality holds
\begin{eqnarray}\label{rwr-lem2}
\sup_{\mathbbmtt{a}\in \mathbbmtt{A}}\,\{\,\langle\, p,\mathbbmtt{f}(t,x,\mathbbmtt{a})\,\rangle -\mathbbmtt{l}(t,x,\mathbbmtt{a})\,\}\;\leq\;H(t,x,p).
\end{eqnarray}
Combining the inequality (\ref{rwr-lem1}) and (\ref{rwr-lem2}) we obtain  \eqref{rwr-lem010}.

The equality  (\ref{rwr-lem0})  follows easily from the definition of the triple
$(\mathbbmtt{A},\mathbbmtt{f},\mathbbmtt{l})$ and Caratheodory's Theorem (see \cite[Thm. 2.29]{R-W}).

Let $|l(t,x,a)-l(t,y,a)|\leq w_R(t,|x-y|)$ for every $t\in[0,T]\setminus\mathcal{N}_R$, $x\in\B_R$, $a\in A$, $R>0$. Then from the definition of $\mathbbmtt{l}$ we have
\begin{eqnarray*}
|\mathbbmtt{l}(t,x,\mathbbmtt{a})-\mathbbmtt{l}(t,y,\mathbbmtt{a})| &\leq &  \sum_{i=1}^{n+1}\alpha_i\,|l(t,x,a_i)-l(t,y,a_i)|\\
&\leq &  \sum_{i=1}^{n+1}\alpha_i\,w_R(t,|x-y|)\;\;=\;\;w_R(t,|x-y|)
\end{eqnarray*}
for all $t\in[0,T]\setminus\mathcal{N}_R$, $x\in\B_R$, $\mathbbmtt{a}\in \mathbbmtt{A}$, $R>0$.
\end{proof}

\begin{proof}[Proof of Theorem \ref{podr22_th_wk}]
By Lemma  \ref{lem-tran-r1w1} there exist a nonempty, compact set $\mathbbmtt{A}$ and
functions $\mathbbmtt{f}$, $\mathbbmtt{l}$ measurable in $t$ for all $(x,\mathbbmtt{a})\in\R^n\times \mathbbmtt{A}$ and continuous in $(x,\mathbbmtt{a})$ for all $t\in[0,T]$ such that the triple $(\mathbbmtt{A},\mathbbmtt{f},\mathbbmtt{l})$ is a representation of $H$ and $\mathbbmtt{f}(t,x,\mathbbmtt{A})=\te{conv}f(t,x,A)$ for every $t\in[0,T]$, $x\in\R^n$. Therefore, by Lemma \ref{lem-tran-r2} we have for all $t\in[0,T]$, $x\in\R^n$
\begin{eqnarray}\label{podr221}
\mathbbmtt{f}(t,x,\mathbbmtt{A})=\D L(t,x,\cdot).
\end{eqnarray}

Now, we prove that the condition \eqref{wol1} holds. Let $\lambda(t,x):=\sup_{\mathbbmtt{a}\in \mathbbmtt{A}}\mathbbmtt{l}(t,x,\mathbbmtt{a})$. Obviously, the function $\lambda$ is $t-$measurable for all $x\in\R^n$ and  $x-$continuous for all $t\in[0,T]$.

Set $t\in[0,T]$ and $x\in\R^n$. If $\kr{v}\in\D L(t,x,\cdot)$ then by the equality (\ref{podr221}), there exists $\kr{\mathbbmtt{a}}\in \mathbbmtt{A}$ such that   $\kr{v}=\mathbbmtt{f}(t,x,\kr{\mathbbmtt{a}})$. Therefore by Lemma \ref{lem-tran-r1}
 $$L(t,x,\kr{v})=L(t,x,\mathbbmtt{f}(t,x,\kr{\mathbbmtt{a}}))\leq \mathbbmtt{l}(t,x,\kr{\mathbbmtt{a}})\leq \lambda(t,x).$$
It means that  $L(t,x,v)\leq \lambda(t,x)$ for every  $t\in[0,T]$, $x\in\R^n$, $v\in\D L(t,x,\cdot)$.

Let $|l(t,x,a)-l(t,y,a)|\leq w_R(t,|x-y|)$ for all $t\in[0,T]\setminus\mathcal{N}_R$, $x\in\B_R$, $a\in A$, $R>0$. Then by Lemma~\ref{lem-tran-r1w1} we have  $|\mathbbmtt{l}(t,x,\mathbbmtt{a})-\mathbbmtt{l}(t,y,\mathbbmtt{a})|\leq w_R(t,|x-y|)$ for every $t\in[0,T]\setminus\mathcal{N}_R$, $x\in\B_R$, $\mathbbmtt{a}\in \mathbbmtt{A}$, $R>0$. We set $t\in[0,T]\setminus\mathcal{N}_R$, $x\in\B_R$ and $R>0$.  Let $\kr{\mathbbmtt{a}}\in \mathbbmtt{A}$ be such that  $\lambda(t,x)=\mathbbmtt{l}(t,x,\kr{\mathbbmtt{a}})$. Then
\begin{eqnarray*}
\lambda(t,x)-\lambda(t,y) &= & \mathbbmtt{l}(t,x,\kr{\mathbbmtt{a}})-\sup_{\mathbbmtt{a}\in \mathbbmtt{A}}\mathbbmtt{l}(t,y,\mathbbmtt{a})\\ & \leq &  \mathbbmtt{l}(t,x,\kr{\mathbbmtt{a}})-\mathbbmtt{l}(t,y,\kr{\mathbbmtt{a}})
\;\;\leq \;\; w_R(t,|x-y|).
\end{eqnarray*}
In addition to this, $t\in[0,T]\setminus\mathcal{N}_R$, $x\in\B_R$ and $R>0$ are arbitrary, so we have
$|\lambda(t,x)-\lambda(t,y)|\leq w_R(t,|x-y|)$ for every $t\in[0,T]\setminus\mathcal{N}_R$, $x\in\B_R$ and $R>0$.

Besides, if functions $f,l$ are continuous, then by Lemma \ref{lem-tran-r1w1}, the functions $\mathbbmtt{f},\mathbbmtt{l}$ are also continuous. Therefore, the function $\lambda$
has to be continuous.
\end{proof}

%%%%%%%%%%%%%%%%%%%%%%%%%%%%%%%%%%%%%%%%%%%%%%%%%%%%%%%%%%%%%%%%%%%%%%%%%%%%%%
%%%%%%%%%%%%%%%%%%%%%%%%%%%%%%%%%%%%%%%%%%%%%%%%%%%%%%%%%%%%%%%%%%%%%%%%%%%%%%
%%%%%%%%%%%%%%%%%%%%%%%%%%%%%%%%%%%%%%%%%%%%%%%%%%%%%%%%%%%%%%%%%%%%%%%%%%%%%%
%%%%%%%%%%%%%%%%%%%%%%%%%%%%%%%%%%%%%%%%%%%%%%%%%%%%%%%%%%%%%%%%%%%%%%%%%%%%%%

\section{Proofs of representation theorems}\label{pofrepth}\label{section-5}

\noindent First, we propose some auxiliary definitions and facts. By $\mathcal{P}_{fc}(\R^m)$
we denote a family of all nonempty, closed and convex subsets of $\R^m$. Then, let  $\mathcal{P}_{kc}(\R^m)$ be a family of all nonempty, convex and compact subsets of $\R^m$.

\begin{Lem}[\te{\cite[p. 369]{A-F}}]\label{lem-pmh}
The set-valued map $P:\R^m\times\mathcal{P}_{fc}(\R^m)\multimap\mathcal{P}_{kc}(\R^m)$ defined by
\begin{equation*}
P(y,K):= K\cap\B(y,2d(y,K))
\end{equation*}
is Lipschitz with the Lipschitz constant $5$, i.e. for all $K,D\in\mathcal{P}_{fc}(\R^m)$ and $x,y\in\R^m$
\begin{equation*}
\mathscr{H}(P(x,K),P(y,D))\leq 5(\mathscr{H}(K,D)+|x-y|).
\end{equation*}
\end{Lem}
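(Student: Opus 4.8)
The plan is to verify first that $P$ is $\mathcal{P}_{kc}(\R^m)$-valued and then to bound each of the two one-sided deviations in $\mathscr{H}$ by $5(\mathscr{H}(K,D)+|x-y|)$, the crucial point being that the radius $2d(y,K)$ leaves enough slack around the metric projection. For well-definedness: since $K$ is closed the infimum $d(y,K)$ is attained at $\pi_K(y)\in K$, and $|y-\pi_K(y)|=d(y,K)\leq 2d(y,K)$, so $\pi_K(y)\in P(y,K)$ and $P(y,K)\neq\emptyset$; being the intersection of the convex closed set $K$ with the closed ball $\B(y,2d(y,K))$, the set $P(y,K)$ is convex, closed and bounded, hence compact. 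Thus $P(y,K)\in\mathcal{P}_{kc}(\R^m)$.

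By symmetry of $\mathscr{H}$ it suffices to estimate $\sup_{a\in P(x,K)}d(a,P(y,D))$; the reverse deviation is obtained by interchanging $(x,K)$ and $(y,D)$. Write $h:=\mathscr{H}(K,D)$ and $e:=|x-y|$. I would repeatedly use the two elementary Lipschitz properties of the distance, namely $|d(z,K)-d(z',K)|\leq|z-z'|$ and $|d(z,K)-d(z,D)|\leq\mathscr{H}(K,D)$, which combine to give $|d(x,K)-d(y,D)|\leq e+h$.

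Now fix $a\in P(x,K)$, so $a\in K$ and $|a-x|\leq 2d(x,K)$. Since $a\in K$ we have $d(a,D)\leq h$, hence $w:=\pi_D(a)\in D$ with $|a-w|\leq h$. If $w\in\B(y,2d(y,D))$ take $b:=w$; otherwise retract $w$ toward $p:=\pi_D(y)$ along the segment $[w,p]$, which lies in $D$ by convexity, and let $b$ be the first point of this segment on the sphere $\partial\B(y,2d(y,D))$. In either case $b\in D\cap\B(y,2d(y,D))=P(y,D)$, and $d(a,P(y,D))\leq|a-b|\leq h+|w-b|$. The retraction length $|w-b|$ is controlled by two facts. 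First, the overshoot is small: $|y-w|-2d(y,D)\leq e+2d(x,K)+h-2d(y,D)=e+h+2\bigl(d(x,K)-d(y,D)\bigr)\leq 3(e+h)$, and here the factor $2$ in the radius is essential, since it produces the balanced combination $2d(x,K)-2d(y,D)$ rather than an expression still growing with $d(y,D)$. Second, the nonexpansiveness of $\pi_D$ together with the obtuse-angle inequality $|w-p|^2\leq|y-w|^2-d(y,D)^2$ bounds how far along $[w,p]$ one must travel to re-enter the ball. Putting these together gives $|w-b|\leq C(e+h)$, whence $d(a,P(y,D))\leq 5(e+h)$; the symmetric estimate is identical, and the two bounds yield the claim.

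The main obstacle is the sharp bookkeeping of the constant in the retraction step: the naive linear (convexity) estimate $|y-b_t|\leq(1-t)|y-w|+t\,d(y,D)$ forces more contraction than the geometry requires and overshoots the target $5$. I would therefore replace it by the exact segment--sphere intersection, solving the quadratic for the parameter $t$ at which $|y-b_t|=2d(y,D)$ and using $\langle w-p,\,y-p\rangle\leq 0$ to bound $|w-p|$; a short computation then shows that the true displacement $|w-b|$ stays comfortably below $5(e+h)-h$, even in the extremal configuration where $d(y,D)$ is large and $w-p$ is nearly orthogonal to $y-p$. The factor $2$ is exactly what keeps both the nonemptiness of the intersection $D\cap\B(y,2d(y,D))$ and the overshoot linear in $e+h$, and it is the interaction of this slack with the orthogonal contraction that one must track to land on the stated constant $5$.
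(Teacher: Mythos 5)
The paper never proves this lemma: it is imported verbatim from Aubin--Frankowska \cite[p.~369]{A-F} and used as a black box, so there is no in-paper argument to compare yours against and the proposal must stand on its own. Structurally it does. The well-definedness argument (nonemptiness via the metric projection, compactness and convexity of the intersection), the reduction to one one-sided deviation, the choice $w=\pi_D(a)$ with $|a-w|\leq h$, the retraction of $w$ along the segment $[w,p]$ toward $p=\pi_D(y)$, and the overshoot estimate $|y-w|-2d(y,D)\leq 3(e+h)$ are all correct; and your diagnosis is exactly right that the naive convexity estimate $|y-b_t|\leq(1-t)|y-w|+t\,d(y,D)$ is too lossy (it yields a constant on the order of $10$), so that one must intersect the segment with the sphere exactly and exploit the obtuse angle at $p$.

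The genuine gap is that the decisive estimate is asserted, not performed: at the crucial moment you write ``$|w-b|\leq C(e+h)$, whence $d(a,P(y,D))\leq 5(e+h)$'' with $C$ unspecified, and the final paragraph defers the constant to ``a short computation'' that never appears --- but that computation is precisely where the constant $5$ is earned, so as submitted the proof is incomplete. For your benefit, it does close, with room to spare. Put $\rho:=e+h$, $r:=d(y,D)$, $u:=y-p$, $v:=w-p$, $m:=|v|$, $c:=-\langle u,v\rangle\geq 0$ (obtuse angle). Your overshoot bound gives $|y-w|^2=r^2+2c+m^2\leq(2r+3\rho)^2$, i.e.\ $2c+m^2\leq 3r^2+12r\rho+9\rho^2$, and for $b_s:=p+sv$ one has $|y-b_s|^2=r^2+2sc+s^2m^2$, so the exit parameter solves $2sc+s^2m^2=3r^2$ and
\begin{equation*}
|w-b|=(1-s)m=\frac{m^2+c-\sqrt{c^2+3r^2m^2}}{m}.
\end{equation*}
One then checks $|w-b|\leq 2\sqrt{3}\,\rho$: if $m\leq 2\sqrt{3}\rho$ this is trivial since $|w-b|\leq m$; if $m>2\sqrt{3}\rho$, square the equivalent inequality $m^2+c-2\sqrt{3}\rho m\leq\sqrt{c^2+3r^2m^2}$ (trivial when its left side is negative), insert the bound on $2c$, and maximize the resulting concave quadratic in $q:=m-2\sqrt{3}\rho$ over $q\geq 0$; everything reduces to $\max(12r-3\rho,0)^2\leq 144r^2$, which holds. (The supremum $2\sqrt{3}\rho$ is approached exactly in your extremal configuration $c=0$, $m\approx\sqrt{3}r$, $r\to\infty$; note also that the nonexpansiveness of $\pi_D$ is never needed --- the binding facts are the overshoot bound and $\langle u,v\rangle\leq 0$.) Hence $d(a,P(y,D))\leq h+2\sqrt{3}\rho\leq(1+2\sqrt{3})\rho<5\rho$, and the symmetric estimate gives $\mathscr{H}(P(x,K),P(y,D))\leq(1+2\sqrt{3})(\mathscr{H}(K,D)+|x-y|)$, i.e.\ your route is viable and in fact delivers the better constant $1+2\sqrt{3}\approx 4.46$. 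What your proof needs is this computation written out rather than promised.
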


The support function $\sigma(K,\cdot):\R^m\to\R$ of the set $K\in\mathcal{P}_{kc}(\R^m)$ is a convex function defined by
\begin{equation*}
\sigma(K,p):=\max_{x\in K}\,\langle p,x\rangle,\quad \forall\,p\in\R^m.
\end{equation*}

\noindent Being Lipschitz, $\sigma(K,\cdot)$ is differentiable a.e. in $\R^m$. Let $m(\noo\sigma(K,p))$ be the element of $\noo\sigma(K,p)$ with the minimal norm. It coincides with $\nabla\sigma(K,p)$ at every $p\in\R^m$ where $\sigma(K,\cdot)$ is differentiable.

\begin{Def}\label{df-scel}
For any $K\in\mathcal{P}_{kc}(\R^m)$,  its Steiner point is defined by
\begin{equation*}
s_m(K):=\frac{1}{\mathrm{vol}(\B)}\int_{\B}m(\noo\sigma(K,p))\,dp\in K,
\end{equation*}
where $\mathrm{vol}(\B)$ is the measure of the $m-$dimensional unit ball $\B\subset\R^m$.
\end{Def}

\begin{Lem}[\te{\cite[p. 366]{A-F}}]\label{lem-scmh}
The function $s_m(\cdot)$ is  Lipschitz in the Hausdorff metric with the Lipschitz constant $m$ on the set of all nonempty convex compact subsets of $\R^m$, i.e.
\begin{equation*}
|s_m(K)-s_m(D)|\leq m\, \mathscr{H}(K,D),\quad \forall\,K,D\in\mathcal{P}_{kc}(\R^m).
\end{equation*}
\end{Lem}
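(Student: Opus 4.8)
The plan is to work directly from Definition \ref{df-scel}, using that $\sigma(K,\cdot)$ is convex and Lipschitz, hence differentiable a.e.\ with $m(\noo\sigma(K,p))=\gr\sigma(K,p)$, so that
\[
s_m(K)-s_m(D)=\frac{1}{\mathrm{vol}(\B)}\int_{\B}\bigl[\gr\sigma(K,p)-\gr\sigma(D,p)\bigr]\,dp.
\]
The pointwise integrand cannot be controlled, since $p\mapsto\gr\sigma(K,p)$ jumps as $K$ varies; the whole point is to transfer the dependence on $K$ from the gradient of the support function onto its boundary values. First I would apply the divergence theorem componentwise on the unit ball $\B\subset\R^m$, whose outward unit normal at $u\in S^{m-1}$ is $u$ itself (taking the field $\sigma(K,\cdot)\,e_i$ in the $i$-th slot), to obtain
\[
\int_{\B}\gr\sigma(K,p)\,dp=\int_{S^{m-1}}\sigma(K,u)\,u\,du,
\]
and likewise for $D$. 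This rewrites the Steiner-point difference as
\[
s_m(K)-s_m(D)=\frac{1}{\mathrm{vol}(\B)}\int_{S^{m-1}}\bigl[\sigma(K,u)-\sigma(D,u)\bigr]\,u\,du.
\]

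Next I would use the elementary estimate that on the unit sphere the support functions are close in terms of Hausdorff distance: from $K\subset D+\mathscr{H}(K,D)\,\B$ and the converse inclusion one gets $\sigma(K,u)\leq\sigma(D,u)+\mathscr{H}(K,D)\,|u|$ and symmetrically, hence $|\sigma(K,u)-\sigma(D,u)|\leq\mathscr{H}(K,D)$ for every $u$ with $|u|=1$. Taking norms under the integral and using $|u|=1$ yields
\[
|s_m(K)-s_m(D)|\leq\frac{\mathscr{H}(K,D)}{\mathrm{vol}(\B)}\int_{S^{m-1}}du=\frac{\mathscr{H}(K,D)}{\mathrm{vol}(\B)}\,\bigl|S^{m-1}\bigr|.
\]
The classical identity $|S^{m-1}|=m\,\mathrm{vol}(\B)$ between the surface area of the unit sphere and the volume of the unit ball then collapses the constant to exactly $m$, giving $|s_m(K)-s_m(D)|\leq m\,\mathscr{H}(K,D)$, which is the claimed bound with the sharp constant.

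The one genuine obstacle is the legitimacy of the divergence-theorem step, since $\sigma(K,\cdot)$ is merely convex and Lipschitz, so only differentiable almost everywhere rather than $C^1$. I would handle this by approximation: replace $K$ by $K+\varepsilon\B$ and mollify, so that the approximating support functions are smooth, apply the classical divergence theorem to them, and pass to the limit as $\varepsilon\to0$. The mollified support functions converge uniformly on $\B$ with gradients converging in $L^1(\B)$, and their boundary traces converge uniformly on $S^{m-1}$, so both sides of the boundary identity pass to the limit and the identity holds for the original $K$. Once this representation is secured, the remaining steps are the routine estimates indicated above, and no further difficulty arises.
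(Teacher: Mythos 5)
The paper itself contains no proof of Lemma \ref{lem-scmh}: it is quoted directly from Aubin--Frankowska \cite[p.\ 366]{A-F}, so there is no internal argument to compare against; your proposal has to stand on its own, and it does. It is in fact the classical proof of this result (essentially the one in the cited monograph): the divergence theorem turns the ball average of $\nabla\sigma(K,\cdot)$ into the spherical integral $\frac{1}{\mathrm{vol}(\B)}\int_{S^{m-1}}\sigma(K,u)\,u\,du$; the inclusions $K\subset D+\mathscr{H}(K,D)\,\B$ and $D\subset K+\mathscr{H}(K,D)\,\B$ give $|\sigma(K,u)-\sigma(D,u)|\leq\mathscr{H}(K,D)$ for $|u|=1$; and the identity $|S^{m-1}|=m\,\mathrm{vol}(\B)$ yields exactly the constant $m$. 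All three steps are correct, as is your identification of the integrand in Definition \ref{df-scel} with $\nabla\sigma(K,p)$, since the minimal-norm element $m(\partial\sigma(K,p))$ coincides with the gradient at every point of differentiability, hence a.e.

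Two minor remarks on the technical step. Replacing $K$ by $K+\varepsilon\B$ buys you nothing: the support function $\sigma(K,\cdot)+\varepsilon|\cdot|$ is no smoother than $\sigma(K,\cdot)$ (smoothness of the support function corresponds to strict convexity of the body, not to smoothness of its boundary). But the step is harmless, because mollification alone already does all the work: for the Lipschitz function $f=\sigma(K,\cdot)$, the mollification $f_\delta=f\ast\rho_\delta$ is smooth on a neighborhood of $\overline{\B}$, satisfies $\nabla f_\delta=(\nabla f)\ast\rho_\delta\to\nabla f$ in $L^1(\B)$ and $f_\delta\to f$ uniformly near $S^{m-1}$, and the approximants need not themselves be support functions for the Gauss--Green identity to pass to the limit. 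Alternatively, you could simply invoke the Gauss--Green theorem in its standard form for Lipschitz (i.e.\ $W^{1,\infty}$) functions on the unit ball and skip the approximation entirely.
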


Let a set-valued map $E:[0,T]\times\R^n\multimap\R^m$ have nonempty, closed values. If a
set-valued map $t\to E(t,x)$ is measurable for every $x\in\R^n$, then the single-valued map $t\to d(y,E(t,x))$ is measurable for every $x\in\R^n$, $y\in\R^m$ (see \cite[Thm. 14.3]{R-W}). If a set-valued map $x\to E(t,x)$  is lower semicontinuous and has a closed graph for every
$t\in[0,T]$, then a  single-valued map $x\to d(y,E(t,x))$ is continuous for every
$t\in[0,T]$, $y\in\R^m$ (see \cite[Prop. 5.11]{R-W}). Besides, the inequality $|d(y,E(t,x))-d(z,E(t,x))|\leq|y-z|$ holds for every $t\in[0,T]$, $x\in\R^n$, $y,z\in\R^m$. Thus, we obtain the following proposition:

\begin{Prop}\label{dow-prp}
We assume that a set-valued map $E:[0,T]\times\R^n\multimap\R^m$ has nonempty, closed values, $E(\cdot,x)$ is measurable for every $x\in\R^n$ and $E(t,\cdot)$, has a closed
graph and is lower semicontinuous for every  $t\in[0,T]$. If a single-valued map $M:[0,T]\times\R^n\to \R$ is $t-$measurable for each $x\in\R^n$ and $x-$continuous for
every $t\in[0,T]$, then a single-valued map defined by
\begin{equation*}
 (t,x,a)\to d(M(t,x)\,a,E(t,x)),\quad \forall\,(t,x,a)\in[0,T]\times\R^n\times\R^m
\end{equation*}
is $t-$measurable for every $(x,a)\in\R^n\times\R^m$ and $(x,a)-$continuous for every  $t\in[0,T]$. In addition to this, it is a $(t,x,a)-$continuous map, if $M$ is continuous, $E$ has
a closed graph and is lower semicontinuous.
\end{Prop}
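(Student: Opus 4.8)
The plan is to treat the three assertions separately and, in each case, to reduce everything to the three properties of the distance function $(t,x)\mapsto d(y,E(t,x))$ recorded immediately above: its $t$-measurability for fixed $y$, its $x$-continuity for fixed $y$, and its $1$-Lipschitz dependence on the point $y$. Throughout I write $\phi(t,x,a):=d(M(t,x)\,a,E(t,x))$.

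First I would prove the $t$-measurability. Fix $x\in\R^n$ and $a\in\R^m$ and introduce the auxiliary function $g(t,y):=d(y,E(t,x))$. By the first cited fact, $g(\cdot,y)$ is measurable for every $y\in\R^m$, and by the cited inequality $|g(t,y)-g(t,z)|\leq|y-z|$ the map $g(t,\cdot)$ is $1$-Lipschitz, hence continuous, uniformly in $t$; thus $g$ is a Carath\'eodory function. Since $M(\cdot,x)$ is measurable, so is $t\mapsto M(t,x)\,a$, and the superposition
\[
t\longmapsto g(t,M(t,x)\,a)=d(M(t,x)\,a,E(t,x))=\phi(t,x,a)
\]
is measurable, because the composition of a Carath\'eodory function with a measurable map is measurable (approximate $t\mapsto M(t,x)\,a$ by simple functions and pass to the limit using continuity of $g$ in its second argument). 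This gives the $t$-measurability for every $(x,a)$.

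Next I would establish $(x,a)$-continuity for fixed $t$. Let $(x_i,a_i)\to(x,a)$. Inserting the intermediate point $d(M(t,x)\,a,E(t,x_i))$ and using the triangle inequality together with the cited Lipschitz estimate, one gets
\[
|\phi(t,x_i,a_i)-\phi(t,x,a)|\leq |M(t,x_i)\,a_i-M(t,x)\,a|+|d(M(t,x)\,a,E(t,x_i))-d(M(t,x)\,a,E(t,x))|.
\]
The first summand tends to $0$ because $M(t,\cdot)$ is continuous and $a_i\to a$, so $M(t,x_i)\,a_i\to M(t,x)\,a$; the second tends to $0$ by the second cited fact applied with the fixed point $y=M(t,x)\,a$, namely continuity of $x\mapsto d(y,E(t,x))$. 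Hence $\phi(t,\cdot,\cdot)$ is continuous.

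For the final assertion, under the extra hypotheses that $M$ is continuous and that $(t,x)\mapsto E(t,x)$ has a closed graph and is lower semicontinuous, the second cited fact upgrades to joint continuity of $(t,x)\mapsto d(y,E(t,x))$ for each fixed $y$ (the argument of \cite[Prop. 5.11]{R-W} applied with $(t,x)$ treated as a single variable). I would then repeat the splitting above with $(t_i,x_i,a_i)\to(t,x,a)$: the Lipschitz term is controlled by $|M(t_i,x_i)\,a_i-M(t,x)\,a|\to0$ via joint continuity of $M$, and the remaining term vanishes by the joint continuity of $(t,x)\mapsto d(M(t,x)\,a,E(t,x))$. The main obstacle is the measurability step, where one must justify that the composition of the Carath\'eodory map $g$ with the measurable selection $t\mapsto M(t,x)\,a$ is again measurable; the two continuity statements are then routine consequences of the triangle inequality combined with the cited regularity facts.
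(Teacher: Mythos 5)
Your proposal is correct and follows essentially the same route as the paper: the paper derives this proposition directly from the three facts it records just beforehand (the $t$-measurability of $t\mapsto d(y,E(t,x))$ via \cite[Thm. 14.3]{R-W}, the $x$-continuity of $x\mapsto d(y,E(t,x))$ via \cite[Prop. 5.11]{R-W}, and the $1$-Lipschitz dependence on $y$), which is exactly the decomposition you use. Your only additions are the explicit Carath\'eodory-composition argument for the measurability step and the triangle-inequality splitting for continuity, both of which are the standard ways to make the paper's implicit combination of those facts rigorous.
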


\begin{Def}
Let $d_H(K,D):= \sup_{z\in K}d(z,D)$. We say a  set-valued map $F$ is \it{upper} (respectively, \it{lower}) \it{semicontinuous in the sense of the Hausdorff's distance}, what we denote by $\mathscr{H}-$usc ($\mathscr{H}-$lsc), if for every point $x_0$ and arbitrary
number $\varepsilon>0$, there exists a number $\delta>0$ such that for every $x\in\B(x_0,\delta)$ the condition  $d_H(F(x_0),F(x))<\varepsilon$ (resp. $d_H(F(x),F(x_0))<\varepsilon$) holds.
\end{Def}
\noindent
Obviously, a set-valued map $F$ is continuous in the sense of the  Hausdorff's distance ($\mathscr{H}-$continuous) if and only if it is $\mathscr{H}-$usc and $\mathscr{H}-$lsc.

The Hausdorff's distance between closed balls can be estimated in the following way:
\begin{equation}\label{dow-omk}
\mathscr{H}(B(x,r),B(y,s))\leq |x-y|+|r-s|,\;\; \forall x,y\in\R^n,\;\; \forall\,r,s\geq 0.
\end{equation}

\begin{Th}\label{th-oparam}
We suppose a set-valued map $E:[0,T]\times\R^n\multimap\R^m$ has nonempty, closed and
convex values, $E(\cdot,x)$ is measurable for every $x\in\R^n$, $E(t,\cdot)$ has a closed graph and is lower semicontinuous for every $t\in[0,T]$. Let a single-valued map $M:[0,T]\times\R^n\to \R_+$ be $t-$measurable for every $x\in\R^n$ and $x-$continuous
for every $t\in[0,T]$. Then there exists a single-valued map $e:[0,T]\times\R^n\times\R^m\to\R^m$ such that $e(\cdot,x,a)$ is measurable for
every  $x\in\R^n$, $a\in\R^m$ and $e(t,\cdot,\cdot)$ is continuous  for every $t\in[0,T]$.
Besides, for every $t\in[0,T]$, $x,y\in\R^n$, $a,b\in\R^m$ it satisfies the equality
$e(t,x,\R^m)=E(t,x)$ and the inequality
\begin{equation}\label{th-oparam-n}
|e(t,x,a)-e(t,y,b)|\leq 5m[\,\mathscr{H}(E(t,x),E(t,y))+|M(t,x)\,a-M(t,y)\,b|\,].
\end{equation}

Additionally, a single-valued map $e$ is continuous, if $M$ is continuous and  $E$ has a closed graph, and is lower semicontinuous.

If a set-valued map $Q:[0,T]\times\R^n\multimap\R^m$ for every $t\in[0,T]$, $x\in\R^n$ satisfies $Q(t,x)\subset E(t,x)$ and $\|Q(t,x)\|\leq M(t,x)$, then for every $t\in[0,T]$, $x\in\R^n$ we have $Q(t,x)\subset e(t,x,\B)\subset E(t,x)$.
\end{Th}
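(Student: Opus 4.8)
The plan is to assemble $e$ from the two Lipschitz devices already provided: the truncation map $P$ of Lemma~\ref{lem-pmh} and the Steiner point $s_m$ of Definition~\ref{df-scel}. Concretely, I would set
\begin{equation*}
e(t,x,a):=s_m\bigl(P(M(t,x)\,a,\,E(t,x))\bigr)=s_m\bigl(E(t,x)\cap\B(M(t,x)\,a,\,2\,d(M(t,x)\,a,E(t,x)))\bigr).
\end{equation*}
Since $E(t,x)\in\mathcal{P}_{fc}(\R^m)$, the truncated set $P(M(t,x)a,E(t,x))$ is nonempty, compact and convex (the nearest point of $E(t,x)$ to $M(t,x)a$ lies in the defining ball), so $s_m$ applies by Definition~\ref{df-scel} and its value lies in $P(\cdots)\subset E(t,x)$. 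This already gives the inclusion $e(t,x,\R^m)\subset E(t,x)$.

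The estimate \eqref{th-oparam-n} is then immediate by composing the $m$-Lipschitz bound of Lemma~\ref{lem-scmh} with the $5$-Lipschitz bound of Lemma~\ref{lem-pmh}, which yields $|e(t,x,a)-e(t,y,b)|\leq m\,\mathscr{H}(P(M(t,x)a,E(t,x)),P(M(t,y)b,E(t,y)))\leq 5m[\mathscr{H}(E(t,x),E(t,y))+|M(t,x)a-M(t,y)b|]$. For the reverse inclusion $E(t,x)\subset e(t,x,\R^m)$, given $z\in E(t,x)$ I would choose $a$ with $M(t,x)a=z$ (taking $a=z/M(t,x)$ when $M(t,x)>0$); then $d(M(t,x)a,E(t,x))=0$, the set $P$ collapses to $\{z\}$, and $e(t,x,a)=s_m(\{z\})=z$. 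The same choice settles the last assertion: if $Q(t,x)\subset E(t,x)$ with $\|Q(t,x)\|\leq M(t,x)$, then each $z\in Q(t,x)$ satisfies $|z|\leq M(t,x)$, so $a=z/M(t,x)\in\B$ and $e(t,x,a)=z$, giving $Q(t,x)\subset e(t,x,\B)\subset E(t,x)$.

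The measurability and continuity claims are where the real work lies, and I would route them through Proposition~\ref{dow-prp}. That proposition gives that $(t,x,a)\mapsto d(M(t,x)a,E(t,x))$ is $t$-measurable and $(x,a)$-continuous, and jointly continuous under the stronger hypotheses on $M$ and $E$. From this I would upgrade to the statement that the compact-valued map $(t,x,a)\mapsto P(M(t,x)a,E(t,x))$ is itself $t$-measurable and $(x,a)$-continuous in the Hausdorff metric, and jointly $\mathscr{H}$-continuous in the regular case; composing with the Lipschitz map $s_m$ (Lemma~\ref{lem-scmh}) then transfers exactly these regularity properties to $e$, while measurable-selection arguments for $t\mapsto P(M(t,x)a,E(t,x))$ handle the $t$-direction.

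The main obstacle is precisely this upgrade for $P$: the hypotheses give $E(t,\cdot)$ only lower semicontinuous with closed graph, not Hausdorff continuous (indeed $\mathscr{H}(E(t,x),E(t,y))$ may be infinite), so the estimate \eqref{th-oparam-n} cannot by itself deliver continuity in $x$. The resolution exploits the truncation at radius $2\,d(M(t,x)a,E(t,x))$: lower semicontinuity of $E(t,\cdot)$ forces lower $\mathscr{H}$-semicontinuity of $P$, the closed graph forces upper $\mathscr{H}$-semicontinuity, and the continuity of the radius from Proposition~\ref{dow-prp} binds the two into genuine Hausdorff continuity of the compact sets $P(M(t,x)a,E(t,x))$. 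The factor $2$ in Lemma~\ref{lem-pmh} is exactly what keeps these truncated sets nonempty and stable under small perturbations of the data, and it is the crux that makes the whole construction go through.
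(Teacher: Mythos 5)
Your construction is exactly the paper's own proof: the paper likewise sets $\Phi(t,x,a)=P(M(t,x)\,a,E(t,x))=E(t,x)\cap\B(M(t,x)\,a,2d(M(t,x)\,a,E(t,x)))$ and $e(t,x,a)=s_m(\Phi(t,x,a))$, obtains \eqref{th-oparam-n} by composing Lemma~\ref{lem-scmh} with Lemma~\ref{lem-pmh}, gets $e(t,x,\R^m)=E(t,x)$ and the $Q$-inclusion via the choice $a=z/M(t,x)$ (which collapses $\Phi$ to $\{z\}$), handles the $t$-direction through measurability of the intersection and of the Steiner point, and proves $(x,a)$-continuity by the very mechanism you describe---closed graph plus local boundedness for upper $\mathscr{H}$-semicontinuity, and lower semicontinuity of $E(t,\cdot)$ combined with the radius-$2d$ ball (an interior/line-segment argument exploiting convexity) for lower semicontinuity of $\Phi$. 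So the proposal is correct and takes essentially the same route as the paper.
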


Theorem \ref{th-oparam} is a version of Parametrization Theorems 9.7.1 and 9.7.2
from the monograph \cite{A-F}.

\begin{proof}[Proof of Theorem \ref{th-oparam}]
Let $(t,x,a)\in[0,T]\times\R^n\times\R^m$. We consider the closed ball $G(t,x,a)\subset \R^m$ of center $M(t,x)\,a$ and radius $2d(M(t,x)\,a,E(t,x))$, i.e.
$$G(t,x,a):=\B(M(t,x)\,a,2d(M(t,x)\,a,E(t,x))).$$
 By the inequality  \eqref{dow-omk}, Proposition \ref{dow-prp} and \cite[Cor. 8.2.13]{A-F} a set-valued map $G(\cdot,x,a)$ is measurable for every  $x\in\R^n$, $a\in\R^m$ and a  set-valued map $G(t,\cdot,\cdot)$ is $\mathscr{H}-$continuous for every $t\in[0,T]$.
Besides, $\|G(t,x,a)\|\leq \varphi(t,x,a)$ for every $t\in[0,T]$, $x\in\R^n$, $a\in\R^m$, where
$$\varphi(t,x,a):= M(t,x)\,|a|+2d(M(t,x)\,a,E(t,x)).$$
By Proposition \ref{dow-prp} and the hypotheses, we obtain  $\varphi(\cdot,x,a)$
is measurable  for all $x\in\R^n$, $a\in\R^m$ and $\varphi(t,\cdot,\cdot)$ is continuous
for all $t\in[0,T]$.

Let $P$ be the map defined in Lemma \ref{lem-pmh}. We set
$$\Phi(t,x,a):= P(M(t,x)\,a,E(t,x))=E(t,x)\cap G(t,x,a).$$
By Corollary \ref{wrow-wm} and hypotheses, the set $\Phi(t,x,a)$ is nonempty, compact
 and convex.  The maps $G(\cdot,x,a)$ and $E(\cdot,a)$ are measurable and have closed values,
so the map $\Phi(\cdot,x,a)$ being their intersection is also measurable for all $x\in\R^n$, $a\in\R^m$ (see \cite[Thm. 8.2.4]{A-F}).
Now we show that a  map $\Phi(t,\cdot,\cdot)$ is $\mathscr{H}-$continuous for all $t\in[0,T]$. Indeed, setting $t\in[0,T]$, the  map $\Phi(t,\cdot,\cdot)$ has a closed graph, because it is an intersection of maps $G(t,\cdot,\cdot)$ and $E(t,\cdot)$ having
closed graphs.
Moreover, $\|\Phi(t,x,a)\|\leq \varphi(t,x,a)$ for all $x\in\R^n$, $a\in\R^m$. By continuity of $\varphi(t,\cdot,\cdot)$ we have $\Phi(t,\cdot,\cdot)$ is locally bounded.
It means that  $\Phi(t,\cdot,\cdot)$ is $\mathscr{H}-$usc. Thus, we are to
prove $\Phi(t,\cdot,\cdot)$ is $\mathscr{H}-$lsc. To do this, it is suffices to show that
it is lower semicontinuous in the Kuratowski's sense, because it has compact values. We fix $(x,a)\in\R^n\times\R^m$ and the open set $O\subset\R^m$ such that $\Phi(t,x,a)\cap O\not=\emptyset$.
If $\I G(t,\cdot,\cdot)=\emptyset$, then $G(t,x,a)\subset O$. We know a map $G(t,\cdot,\cdot)$ has compact values and is $\mathscr{H}-$continuous. Thus, we have
$G(t,x',a')\subset O$ for all $(x',a')$ near $(x,a)$. Therefore $\Phi(t,x',a')\subset G(t,x',a')\subset O$ for all $(x',a')$ near $(x,a)$.
Let  $z_1\in \Phi(t,x,a)\cap O$ and $\I G(t,\cdot,\cdot)\not=\emptyset$. Then by the definition of $G(t,\cdot,\cdot)$ there exists $z_2\in E(t,x)\cap \I G(t,x,a)$. Thus, the interval $(z_1,z_2]\subset E(t,x)\cap \I G(t,x,a)$. Consequently, we can find an element $z\in\R^m$ satisfying $z\in O\cap E(t,x)\cap \I G(t,x,a)$. Hence, for some $\varepsilon>0$ we have $\B(z,\varepsilon)\subset G(t,x,a)\cap O$. The set-valued map $G(t,\cdot, \cdot)$ is a ball whose center and radius are continuous functions. Hence, for every $(x',a')$ sufficiently close to $(x,a)$ we have $\B(z,\varepsilon/2)\subset G(t,x',a')$. On the other hand, $E$ is lower semicontinuous, so $\B(z,\varepsilon/2)\cap E(t,x')\not=\emptyset$ for all $x'$ near $x$. Therefore for every $(x',a')$ sufficiently close to $(x,a)$ we have $\Phi(t,x',a')\cap O\not=\emptyset$. Thus, the  map $\Phi(t,\cdot,\cdot)$ is lower
semicontinuous.

We define the single-valued map \,$e$\, from \,$[0,T]\times\R^n\times\R^m$\, to \,$\R^m$\, by
$$ e(t,x,a):= s_m(\Phi(t,x,a)),$$
where $s_m$ in the Steiner selection. Since $\Phi$ is measurable with respect to $t$, using the equivalent of the definition of $s_m$ from \cite[p. 365]{A-F}, we deduce that $e$ is also measurable with respect to $t$. by Lemma \ref{lem-scmh} we have for all $t,s\in[0,T]$, $x,y\in\R^n$, $a,b\in\R^m$,
\begin{equation}\label{dow-nreh}
|e(t,x,a)-e(s,y,b)|\leq m\mathscr{H}(\Phi(t,x,a),\Phi(s,y,b)).
\end{equation}
We have shown that  $\Phi(t,\cdot,\cdot)$ is $\mathscr{H}-$continuous for every $t\in[0,T]$. By the inequality \eqref{dow-nreh} we have $e(t,\cdot,\cdot)$ is continuous
for every $t\in[0,T]$. Additionally, if $E$ has a closed graph and is lower
semicontinuous, and  $M$ is continuous, then similarly to the above, one can prove
that    $\Phi$ is $\mathscr{H}-$continuous. Then by the inequality
 \eqref{dow-nreh} we have that a single-valued map  $e$ is continuous.

We notice that by the inequality \eqref{dow-nreh} and Lemma \ref{lem-pmh}  for all $t\in[0,T]$, $x,y\in\R^n$, $a,b\in\R^m$ we obtain
\begin{equation*}
|e(t,x,a)-e(t,y,b)|\leq 5m[\,\mathscr{H}(E(t,x),E(t,y))+|M(t,x)\,a-M(t,y)\,b|\,].
\end{equation*}

Now we show that $e(t,x,\R^m)=E(t,x)$. For this purpose, we fix $t\in[0,T]$, $x\in\R^n$ and $z\in E(t,x)$. Setting
$$a:=z/M(t,x)$$
we derive
$$a\in\R^m,\quad M(t,x)\,a=z,\quad \Phi(t,x,a)=\{z\}.$$
the above and Definition \ref{df-scel} imply that
$$e(t,x,a)=s_m(\Phi(t,x,a))=z.$$
This means that $E(t,x)\subset e(t,x,\R^m)$ for all $t\in[0,T]$, $x\in\R^n$. The opposite inclusion is the result of Definition~\ref{df-scel}, i.e. $e(t,x,a)=s_m(\Phi(t,x,a))\in \Phi(t,x,a)\subset E(t,x)$ for all $t\in[0,T]$, $x\in\R^n$, $a\in\R^m$.

Let the set-valued map $Q:[0,T]\times\R^n\multimap\R^m$ for every $t\in[0,T]$, $x\in\R^n$ satisfies $Q(t,x)\subset E(t,x)$ and $\|Q(t,x)\|\leq M(t,x)$. We prove that $Q(t,x)\subset e(t,x,\B)\subset E(t,x)$ for every $t\in[0,T]$, $x\in\R^n$. For this purpose, we fix $t\in[0,T]$, $x\in\R^n$ and $z\in Q(t,x)$. Using the hypotheses, we have
$z\in E(t,x)$ and $a:=z/M(t,x)\in\B$. Therefore $\Phi(t,x,a)=\{z\}$. By Definition \ref{df-scel}
we get $e(t,x,a)=s_m(\Phi(t,x,a))=z$. This means that $Q(t,x)\subset e(t,x,\B)$ for all $t\in[0,T]$, $x\in\R^n$. The second inclusion is the result of $e(t,x,\B)\subset e(t,x,\R^m)=E(t,x)$.
\end{proof}

\subsection{Proof of Theorem \ref{th-rprez-glo}} This subsection is devoted to the proof of the new representation theorem for convex Hamiltonians, with noncompact control sets.

\begin{Prop}\label{prop-reprezentacja H}
We suppose that a function $p\rightarrow H(t,x,p)$ is proper, convex and lower semicontinuous.
Let $e(t,x,A)=E_L(t,x)$ and $L(t,x,\cdot\,)=H^{\ast}(t,x,\cdot\,)$.
If $e(t,x,a)=(f(t,x,a),l(t,x,a))$ for every $a\in A$, then
\begin{equation}\label{prop-reprezentacja H-row}
 H(t,x,p)=\sup_{a\in A}\,\{\,\langle\, p,f(t,x,a)\,\rangle-l(t,x,a)\,\}.
\end{equation}
Besides,  $f(t,x,A)=\D L(t,x,\cdot)$ and $l(t,x,a)\geq -|H(t,x,0)|$ for every $a\in A$.
\end{Prop}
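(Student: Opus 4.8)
The plan is to combine the Fenchel--Moreau duality $H(t,x,\cdot\,)=L^{\ast}(t,x,\cdot\,)$ with the hypothesis $e(t,x,A)=E_L(t,x)=\E L(t,x,\cdot)$. Fix $t\in[0,T]$ and $x\in\R^n$. Since $p\to H(t,x,p)$ is proper, convex and lower semicontinuous, by \cite[Thm. 11.1]{R-W} the conjugate $L(t,x,\cdot\,)=H^{\ast}(t,x,\cdot\,)$ is proper and $H(t,x,\cdot\,)=L^{\ast}(t,x,\cdot\,)$, so that $H(t,x,p)=\sup_{v}\{\langle p,v\rangle-L(t,x,v)\}$. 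As only $v\in\D L(t,x,\cdot)$ contribute to this supremum and $\D L(t,x,\cdot)\neq\emptyset$, we may take the supremum over $\D L(t,x,\cdot)$ alone.

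First I would establish $f(t,x,A)=\D L(t,x,\cdot)$. Writing $e=(f,l)$ and applying the projection $\pi_v$ to the identity $e(t,x,A)=E_L(t,x)$ gives $f(t,x,A)=\pi_v(\E L(t,x,\cdot))$. A point $v$ belongs to this projection precisely when $L(t,x,v)\leq\eta$ for some real $\eta$; since $L(t,x,\cdot)$ is proper it never equals $-\infty$, so this is equivalent to $L(t,x,v)<+\infty$, i.e. to $v\in\D L(t,x,\cdot)$. Hence the projection of the epigraph equals the effective domain.

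Next I would prove the representation formula \eqref{prop-reprezentacja H-row} by two opposite inequalities. For the bound $\sup_{a}\{\cdots\}\leq H(t,x,p)$, note that every $a\in A$ satisfies $(f(t,x,a),l(t,x,a))=e(t,x,a)\in E_L(t,x)$, hence $L(t,x,f(t,x,a))\leq l(t,x,a)$; therefore $\langle p,f(t,x,a)\rangle-l(t,x,a)\leq\langle p,f(t,x,a)\rangle-L(t,x,f(t,x,a))\leq H(t,x,p)$, and taking the supremum over $a$ gives the claim. For the reverse inequality, fix $v\in\D L(t,x,\cdot)$; then $(v,L(t,x,v))\in\E L(t,x,\cdot)=e(t,x,A)$, so there is $a\in A$ with $f(t,x,a)=v$ and $l(t,x,a)=L(t,x,v)$, whence $\langle p,v\rangle-L(t,x,v)=\langle p,f(t,x,a)\rangle-l(t,x,a)\leq\sup_{a}\{\cdots\}$. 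Taking the supremum over $v\in\D L(t,x,\cdot)$ and using the reduction from the first paragraph yields $H(t,x,p)\leq\sup_{a}\{\cdots\}$, and the two bounds combine to the desired equality.

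Finally, for the lower bound on $l$, I would evaluate $H=L^{\ast}$ at $p=0$ to get $H(t,x,0)=\sup_{v}\{-L(t,x,v)\}=-\inf_{v}L(t,x,v)$, so $\inf_{v}L(t,x,v)=-H(t,x,0)$. For any $a\in A$ the inclusion $(f(t,x,a),l(t,x,a))\in E_L(t,x)$ again gives $l(t,x,a)\geq L(t,x,f(t,x,a))\geq\inf_{v}L(t,x,v)=-H(t,x,0)\geq-|H(t,x,0)|$. None of these steps is a serious obstacle; the only point needing care is to use the \emph{full} surjectivity $e(t,x,A)=E_L(t,x)$ --- rather than merely the pointwise inequality $L(t,x,f)\leq l$ --- in order to secure the reverse inequality in the representation formula.
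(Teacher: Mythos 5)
Your proposal is correct and follows essentially the same route as the paper's proof: both arguments use the inclusion $e(t,x,a)\in E_L(t,x)$ together with properness of $L$ to get the inequality $\sup_a\{\cdots\}\leq H(t,x,p)$, use the surjectivity $e(t,x,A)=E_L(t,x)$ applied to points $(v,L(t,x,v))$ to get the reverse inequality and $f(t,x,A)=\D L(t,x,\cdot)$, and invoke the biconjugacy from \cite[Thm. 11.1]{R-W} plus evaluation at $p=0$ for the bound $l(t,x,a)\geq -|H(t,x,0)|$. The only cosmetic difference is that you establish $f(t,x,A)=\D L(t,x,\cdot)$ up front via the projection of the epigraph, while the paper extracts the two inclusions in the course of proving the two inequalities.
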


\begin{proof}
We know that $e(t,x,a)\in E_L(t,x)$ for every $a\in A$, so $(f(t,x,a),l(t,x,a))\in E_L(t,x)$ for every  $a\in A$. Therefore, by the definition of the set $E_L(t,x)$ we obtain $L(t,x, f(t,x,a))\leq l(t,x,a)$ for every $a\in A$. Thus, $f(t,x,a)\in\D L(t,x,\cdot)$ for every $a\in A$, because the function  $v\rightarrow L(t,x,v)$ is proper (see \cite[Thm. 11.1]{R-W}). Therefore, for every  $a\in A$ we have
\begin{eqnarray*}
\langle p,f(t,x,a)\rangle-l(t,x,a) &\leq & \langle\, p,f(t,x,a)\,\rangle-L(t,x,f(t,x,a))\\
&\leq &\;\;\; \sup_{v\in\D L(t,x,\cdot)}\,\{\,\langle p,v\rangle-L(t,x,v)\,\}\;\;=\;\;H(t,x,p).\\
\end{eqnarray*}
By the above argumentation $f(t,x,A)\subset\D L(t,x,\cdot)$ and
\begin{equation}\label{nrp1}
\sup_{a\in A}\,\{\,\langle\, p,f(t,x,a)\,\rangle-l(t,x,a)\,\}\leq H(t,x,p).
\end{equation}

We fix  $\kr{v}\in\D L(t,x,\cdot)$. We know that $(\kr{v},L(t,x,\kr{v}))\in E_L(t,x)=e(t,x,A)$, so there exists $\kr{a}\in A$ such that  $(\kr{v},L(t,x,\kr{v}))=e(t,x,\kr{a})=(f(t,x,\kr{a}),l(t,x,\kr{a}))$. Therefore,  $\kr{v}=f(t,x,\kr{a})$ and $L(t,x,\kr{v})=l(t,x,\kr{a})$. Moreover,
\begin{eqnarray*}
\langle p,\kr{v}\rangle-L(t,x,\kr{v}) &= & \langle\, p,f(t,x,\kr{a})\,\rangle-l(t,x,\kr{a})\\
&\leq & \sup_{a\in A}\,\{\,\langle\, p,f(t,x,a)\,\rangle-l(t,x,a)\,\}.
\end{eqnarray*}
So $\D L(t,x,\cdot)\subset f(t,x,A)$ and
\begin{equation*}
\sup_{v\in\D L(x,\cdot)}\,\{\,\langle p,v\rangle-L(t,x,v)\,\}\leq \sup_{a\in A}\,\{\,\langle\, p,f(t,x,a)\,\rangle-l(t,x,a)\,\}.
\end{equation*}
By the last inequality and the equality $H(t,x,\cdot\,)=L^{\ast}(t,x,\cdot\,)$, that
holds because of \cite[Thm. 11.1]{R-W}, we obtain
\begin{eqnarray}\label{nrp2}
\nonumber H(t,x,p) &= & \sup_{v\in\D L(t,x,\cdot)}\,\{\,\langle p,v\rangle-L(t,x,v)\,\}\\
&\leq & \sup_{a\in A}\,\{\,\langle\, p,f(t,x,a)\,\rangle-l(t,x,a)\,\}.
\end{eqnarray}

Putting together inequalities \eqref{nrp1}, \eqref{nrp2} we get the equality
\eqref{prop-reprezentacja H-row}. Besides,  $f(t,x,A)=\D L(t,x,\cdot)$. Finally, we notice that
by the equality $L(t,x,\cdot\,)=H^{\ast}(t,x,\cdot\,)$ for all $v\in\R^n$ we have
$L(t,x,v)\geq -|H(t,x,0)|$. Thus,
$l(t,x,a)\geq L(t,x,f(t,x,a))\geq-|H(t,x,0)|$ for all $a\in A$.
\end{proof}

\begin{Th}\label{do-th-parame-glo1}
We assume that  $H$ satisfies  \te{(H1)$-$(H3)} and \eqref{hip_l1}. Let $L$ be given by \eqref{tran1}. Then there exists a map  $e:[0,T]\times\R^n\times\R^{n+1}\rightarrow\R^{n+1}$, measurable in $t$ for all $(x,a)\in\R^n\times\R^{n+1}$ and continuous in $(x,a)$ for all $t\in[0,T]$ such that for every $t\in[0,T]$ and $x\in\R^n$ it satisfies
%\vspace{-2mm}
\begin{equation}\label{do-th-parame-glo1-r1}
e\left(t,\,x,\,\R^{n+1}\right)=E_L(t,x).
\end{equation}
Moreover, for any $R>0$ and for all $t\in[0,T]\setminus\mathcal{N}_R$,\, $x,y\in \B_R$,\, $a,b\in R^{n+1}$
\begin{equation}\label{do-th-parame-glo1-r2}
|e(t,x,a)-e(t,y,b)|\leq 10(n+1)[\,k_R(t)|x-y|+w_R(t,|x-y|)+|a-b|\,].
\end{equation}
Furthermore, if $H$ is continuous, so is $e$.
\end{Th}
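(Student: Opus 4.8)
The plan is to deduce this statement directly from the general parametrization Theorem~\ref{th-oparam}, applied to the epigraph multifunction $E_L:[0,T]\times\R^n\multimap\R^{n+1}$ with ambient dimension $m=n+1$ and with the constant multiplier $M\equiv 1$. First I would check that $E_L$ meets the hypotheses of Theorem~\ref{th-oparam}, which is exactly the content of Corollary~\ref{wrow-wm}: under \te{(H1)$-$(H3)}, property (M1) gives that $E_L(t,x)$ is nonempty, closed and convex, (M4) gives measurability of $t\to E_L(t,x)$, and (M2)$-$(M3) give that $x\to E_L(t,x)$ has a closed graph and is lower semicontinuous. The map $M\equiv 1$ is trivially nonnegative, $t$-measurable and $x$-continuous, so all assumptions of Theorem~\ref{th-oparam} are in force.

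Applying Theorem~\ref{th-oparam} then yields a single-valued map $e:[0,T]\times\R^n\times\R^{n+1}\to\R^{n+1}$, measurable in $t$ and continuous in $(x,a)$, satisfying $e(t,x,\R^{n+1})=E_L(t,x)$, which is precisely \eqref{do-th-parame-glo1-r1}. Here the use of the noncompact control set $\R^{n+1}$ is essential: since the values of $E_L$ are epigraphs and hence unbounded, recovering an arbitrary point $z\in E_L(t,x)$ as $z=e(t,x,a)$ forces $a=z/M(t,x)=z$ to range over all of $\R^{n+1}$, so no bounded control set could produce surjectivity.

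For the quantitative estimate \eqref{do-th-parame-glo1-r2}, I would feed the two available inputs into the Lipschitz bound \eqref{th-oparam-n} of Theorem~\ref{th-oparam}, namely
\[
|e(t,x,a)-e(t,y,b)|\leq 5(n+1)\big[\,\mathscr{H}(E_L(t,x),E_L(t,y))+|M(t,x)\,a-M(t,y)\,b|\,\big].
\]
With $M\equiv 1$ the second term collapses to $|a-b|$, while by Corollary~\ref{hlc-cor-ner} the first term is controlled, for $t\in[0,T]\setminus\mathcal{N}_R$ and $x,y\in\B_R$, by $\mathscr{H}(E_L(t,x),E_L(t,y))\leq 2k_R(t)|x-y|+2w_R(t,|x-y|)$. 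Substituting and using $5(n+1)|a-b|\leq 10(n+1)|a-b|$ reproduces exactly the right-hand side of \eqref{do-th-parame-glo1-r2}.

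Finally, continuity of $e$ under continuity of $H$ follows from the last assertion of Theorem~\ref{th-oparam}, which requires $M$ continuous (true for $M\equiv 1$) together with $E_L$ having a closed graph and being lower semicontinuous as a map of $(t,x)$; this is exactly property (M6) of Corollary~\ref{wrow-wm} in the continuous case. There is no serious obstacle: the whole argument is a bookkeeping reduction of the theorem to Theorem~\ref{th-oparam} and Corollary~\ref{hlc-cor-ner}. The only point that deserves a moment's care is the choice of the multiplier, since one must recognize that $M\equiv 1$ simultaneously keeps the $a$-dependence of the bound clean and, absorbing the factor-of-two loss in Corollary~\ref{hlc-cor-ner}, yields precisely the constant $10(n+1)$ and the symmetric appearance of $|a-b|$ in \eqref{do-th-parame-glo1-r2}.
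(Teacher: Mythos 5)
Your proposal is correct and takes essentially the same route as the paper's own proof: both apply Theorem~\ref{th-oparam} to $E=E_L$ with the constant multiplier $M\equiv 1$, verify its hypotheses through Corollary~\ref{wrow-wm}, combine the bound \eqref{th-oparam-n} with Corollary~\ref{hlc-cor-ner} to obtain \eqref{do-th-parame-glo1-r2}, and use the continuity clause of Theorem~\ref{th-oparam} together with (M6) for the final assertion. The paper's argument is exactly this bookkeeping reduction, with the same constant accounting $5(n+1)\cdot 2=10(n+1)$ and $5(n+1)|a-b|\leq 10(n+1)|a-b|$.
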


\begin{proof}
Let $M(t,x)\equiv 1$ and $E(t,x):=E_L(t,x)$ for every $t\in[0,T]$, $x\in\R^n$.
Using hypotheses and Corollary~\ref{wrow-wm} we have the maps $M$ and $E$ satisfy
assumptions of  Theorem \ref{th-oparam}. Therefore,  there exists a map  $e:[0,T]\times\R^n\times\R^{n+1}\rightarrow\R^{n+1}$ measurable in $t$ for all $(x,a)\in\R^n\times\R^{n+1}$ and continuous in $(x,a)$ for all $t\in[0,T]$ such that for every $t\in[0,T]$, $x,y\in\R^n$, $a,b\in\R^{n+1}$ it satisfies the equality  \eqref{do-th-parame-glo1-r1} and the inequality \eqref{th-oparam-n}. By the inequality
\eqref{th-oparam-n} and Corollary~\ref{hlc-cor-ner} we have
\begin{eqnarray*}
|e(t,x,a)-e(t,y,b)| &\leq & 5(n+1)[\,\mathscr{H}(E_L(t,x),E_L(t,y))+|a-b|\,]\\
&\leq & 10(n+1)[\,k_R(t)|x-y|+w_R(t,|x-y|)\,]+5(n+1)|a-b|,
\end{eqnarray*}
for all $t\in[0,T]\setminus\mathcal{N}_R$,\, $x,y\in \B_R$,\, $a,b\in R^{n+1}$ and $R>0$.
It means that the inequality \eqref{do-th-parame-glo1-r2} holds. Additionally, if
we assume that Hamiltonian $H$ is continuous, then using hypotheses, Corollary \ref{wrow-wm} and  Theorem~\ref{th-oparam} we obtain that the map $e$ is continuous.
\end{proof}

\begin{Rem}
Let $e:[0,T]\times\R^n\times\R^{n+1}\rightarrow\R^{n+1}$ be a function from Theorem \ref{do-th-parame-glo1}. We define two functions $f:[0,T]\times\R^n\times \R^{n+1}\rightarrow\R^n$ and $l:[0,T]\times\R^n\times \R^{n+1}\rightarrow\R$ by formulas:
\begin{equation*}
f(t,x,a):=\pi_v(e(t,x,a))\;\;\;\te{and}\;\;\;l(t,x,a):=\pi_\eta(e(t,x,a)),
\end{equation*}
where $\pi_v(v,\eta)=v$ and $\pi_\eta(v,\eta)=\eta$ for every $v\in\R^n$ and $\eta\in\R$. Then for every $t\in[0,T]$, $x\in\R^n$, $a\in \R^{n+1}$ the following equality holds
\begin{equation*}
e(t,x,a)=(f(t,x,a),l(t,x,a)).
\end{equation*}
Thus, for every $t\in[0,T]$, $x,y\in\R^n$, $a,b\in \R^{n+1}$ we obtain
\begin{equation*}
|f(t,x,a)-f(t,y,b)| \;\leq\;  |e(t,x,a)-e(t,y,b)|,\quad |l(t,x,a)-l(t,y,b)|\; \leq \; |e(t,x,a)-e(t,y,b)|.
\end{equation*}
It means, by Corollary \ref{wrow-wm}, Proposition \ref{prop-reprezentacja H} and Theorem \ref{do-th-parame-glo1}, that functions $f,l$ satisfy every condition asserted in Theorem \ref{th-rprez-glo}.
\end{Rem}

\subsection{Proof of Theorem \ref{th-rprez-glo12}} This subsection is devoted to the proof of the new representation theorem for convex Hamiltonians, with compact control sets.

\begin{Prop}\label{prop-reprezentacja H-ogr}
We suppose that the function $p\rightarrow H(t,x,p)$ is proper, convex and  lower semicontinuous.
Let $E_{\lambda,L}(t,x)\subset e(t,x,\B)\subset E_L(t,x)$ and $L(t,x,\cdot\,)=H^{\ast}(t,x,\cdot\,)$.
If $L(t,x,v)\leq\lambda(t,x)$ for all $v\in\D L(t,x,\cdot)$ and $e(t,x,a)=(f(t,x,a),l(t,x,a))$ for all $a\in \B$, then
\begin{equation}\label{prop-reprezentacja H-row-org}
 H(t,x,p)=\sup_{a\in \B}\,\{\,\langle\, p,f(t,x,a)\,\rangle-l(t,x,a)\,\}.
\end{equation}
Moreover, $f(t,x,\B)=\D L(t,x,\cdot)$.
\end{Prop}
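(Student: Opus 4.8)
The plan is to follow the proof of Proposition~\ref{prop-reprezentacja H} line by line, isolating the single point at which the sandwich inclusion $E_{\lambda,L}(t,x)\subset e(t,x,\B)\subset E_L(t,x)$ must substitute for the exact equality $e(t,x,A)=E_L(t,x)$ used there. As a preliminary I would record that, since $H(t,x,\cdot)$ is proper, convex and lower semicontinuous, \cite[Thm. 11.1]{R-W} gives $L(t,x,\cdot)=H^{\ast}(t,x,\cdot)$ proper, convex and lower semicontinuous together with $H(t,x,\cdot)=L^{\ast}(t,x,\cdot)$, so that $H(t,x,p)=\sup_{v\in\D L(t,x,\cdot)}\{\langle p,v\rangle-L(t,x,v)\}$. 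This representation of $H$ is what both inequalities will be compared against.

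First I would prove $\sup_{a\in\B}\{\langle p,f(t,x,a)\rangle-l(t,x,a)\}\leq H(t,x,p)$ together with $f(t,x,\B)\subset\D L(t,x,\cdot)$, using only the right-hand inclusion $e(t,x,\B)\subset E_L(t,x)$. For each $a\in\B$ the point $(f(t,x,a),l(t,x,a))=e(t,x,a)$ lies in $E_L(t,x)$, so $L(t,x,f(t,x,a))\leq l(t,x,a)$; hence $f(t,x,a)\in\D L(t,x,\cdot)$ and $\langle p,f(t,x,a)\rangle-l(t,x,a)\leq\langle p,f(t,x,a)\rangle-L(t,x,f(t,x,a))\leq H(t,x,p)$. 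Taking the supremum over $a\in\B$ delivers the inequality and the inclusion. This half is verbatim the corresponding part of Proposition~\ref{prop-reprezentacja H}.

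Next I would establish the reverse inequality and $\D L(t,x,\cdot)\subset f(t,x,\B)$, and this is where the left-hand inclusion $E_{\lambda,L}(t,x)\subset e(t,x,\B)$ and the boundedness of $L$ on the effective domain enter. Fixing $\kr{v}\in\D L(t,x,\cdot)$, the hypothesis $L(t,x,v)\leq\lambda(t,x)$ for $v\in\D L(t,x,\cdot)$ gives $L(t,x,\kr{v})\leq\lambda(t,x)$, so by the definition of $E_{\lambda,L}$ the point $(\kr{v},L(t,x,\kr{v}))$ belongs to $E_{\lambda,L}(t,x)$ and therefore to $e(t,x,\B)$. Thus there is $\kr{a}\in\B$ with $e(t,x,\kr{a})=(\kr{v},L(t,x,\kr{v}))$, i.e. $f(t,x,\kr{a})=\kr{v}$ and $l(t,x,\kr{a})=L(t,x,\kr{v})$, whence $\langle p,\kr{v}\rangle-L(t,x,\kr{v})=\langle p,f(t,x,\kr{a})\rangle-l(t,x,\kr{a})\leq\sup_{a\in\B}\{\langle p,f(t,x,a)\rangle-l(t,x,a)\}$. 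Taking the supremum over $\kr{v}\in\D L(t,x,\cdot)$ and invoking the representation of $H$ recalled above yields simultaneously $H(t,x,p)\leq\sup_{a\in\B}\{\langle p,f(t,x,a)\rangle-l(t,x,a)\}$ and $\D L(t,x,\cdot)\subset f(t,x,\B)$. Combining the two halves gives \eqref{prop-reprezentacja H-row-org} and the equality $f(t,x,\B)=\D L(t,x,\cdot)$.

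The only genuine obstacle relative to the noncompact setting of Proposition~\ref{prop-reprezentacja H} is the surjectivity step: there one simply writes $(\kr{v},L(t,x,\kr{v}))\in E_L(t,x)=e(t,x,A)$, but here $e(t,x,\B)$ need not equal $E_L(t,x)$. The key realization is that the particular epigraph point we need, namely $(\kr{v},L(t,x,\kr{v}))$, lies in the strictly smaller slab $E_{\lambda,L}(t,x)$ precisely because $L(t,x,\kr{v})\leq\lambda(t,x)$, and this smaller set is the one guaranteed to be covered by $e(t,x,\B)$. This is exactly the point at which the assumption that the Lagrangian is bounded on its effective domain is used, and it clarifies why that assumption is unavoidable for a representation with a compact control set.
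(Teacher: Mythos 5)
Your proof is correct and follows essentially the same route as the paper's own proof: the right-hand inclusion $e(t,x,\B)\subset E_L(t,x)$ yields $\sup_{a\in\B}\{\langle p,f(t,x,a)\rangle-l(t,x,a)\}\leq H(t,x,p)$ together with $f(t,x,\B)\subset\D L(t,x,\cdot)$, while the bound $L(t,x,\kr{v})\leq\lambda(t,x)$ places $(\kr{v},L(t,x,\kr{v}))$ in $E_{\lambda,L}(t,x)\subset e(t,x,\B)$ and gives the reverse inequality and inclusion. Your closing remark correctly identifies the only point of departure from Proposition \ref{prop-reprezentacja H}, which is exactly how the paper handles it.
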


\begin{proof}
Because $e(t,x,a)\in E_L(t,x)$ for every  $a\in \B$, it follows that $(f(t,x,a),l(t,x,a))\in E_L(t,x)$ for
every  $a\in \B$. Therefore by the definition of the set $E_L(t,x)$, we obtain $L(t,x, f(t,x,a))\leq l(t,x,a)$ for every $a\in \B$. Hence $f(t,x,a)\in\D L(t,x,\cdot)$ for all  $a\!\in\! \B$, because the function  $v\!\rightarrow\! L(t,x,v)$ is proper (see \cite[Thm. 11.1]{R-W}). Thus, for every  $a\in \B$ we have
\begin{eqnarray*}
\langle p,f(t,x,a)\rangle-l(t,x,a) &\leq & \langle\, p,f(t,x,a)\,\rangle-L(t,x,f(t,x,a))\\
&\leq &\;\;\; \sup_{v\in\D L(t,x,\cdot)}\,\{\,\langle p,v\rangle-L(t,x,v)\,\}\;\;=\;\;H(t,x,p).\\
\end{eqnarray*}
%\vspace*{-5mm}
%\pagebreak
%\noindent
Thus $f(t,x,\B)\subset\D L(t,x,\cdot)$ and
\begin{equation}\label{nrp1-org}
\sup_{a\in \B}\,\{\,\langle\, p,f(t,x,a)\,\rangle-l(t,x,a)\,\}\leq H(t,x,p).
\end{equation}

We set  $\kr{v}\in\D L(t,x,\cdot)$. Using assumptions, $L(t,x,\kr{v})\leq\lambda(t,x)$. Therefore $(\kr{v},L(t,x,\kr{v}))\in E_{\lambda,L}(t,x)\subset e(t,x,\B)$. So, there exists
$\kr{a}\in \B$ such that $(\kr{v},L(t,x,\kr{v}))=e(t,x,\kr{a})=(f(t,x,\kr{a}),l(t,x,\kr{a}))$. Hence $\kr{v}=f(t,x,\kr{a})$ and $L(t,x,\kr{v})=l(t,x,\kr{a})$. Besides,
\begin{eqnarray*}
\langle p,\kr{v}\rangle-L(t,x,\kr{v}) &= & \langle\, p,f(t,x,\kr{a})\,\rangle-l(t,x,\kr{a})\\
&\leq & \sup_{a\in \B}\,\{\,\langle\, p,f(t,x,a)\,\rangle-l(t,x,a)\,\}.
\end{eqnarray*}
So $\D L(t,x,\cdot)\subset f(t,x,\B)$ and
\begin{equation*}
\sup_{v\in\D L(x,\cdot)}\,\{\,\langle p,v\rangle-L(t,x,v)\,\}\leq \sup_{a\in \B}\,\{\,\langle\, p,f(t,x,a)\,\rangle-l(t,x,a)\,\}.
\end{equation*}
By the last inequality and the equality  $H(t,x,\cdot\,)=L^{\ast}(t,x,\cdot\,)$, that holds because of~\cite[Thm. 11.1]{R-W},  we obtain
\begin{eqnarray}\label{nrp2-org}
\nonumber H(t,x,p) &= & \sup_{v\in\D L(t,x,\cdot)}\,\{\,\langle p,v\rangle-L(t,x,v)\,\}\\
&\leq & \sup_{a\in \B}\,\{\,\langle\, p,f(t,x,a)\,\rangle-l(t,x,a)\,\}.
\end{eqnarray}

Combining inequalities (\ref{nrp1-org}) and (\ref{nrp2-org}) we obtain the equality
\eqref{prop-reprezentacja H-row-org}. Additionally, we have that  $f(t,x,\B)=\D L(t,x,\cdot)$.
\end{proof}

\begin{Th}\label{do-th-parame-lo1}
We Assume that  $H$ satisfies  \te{(H1)$-$(H4)} and \eqref{hip_l1}. Let $L$ be given by \eqref{tran1} and satisfy \eqref{wol1}. Then there exists  $e:[0,T]\times\R^n\times\B\rightarrow\R^{n+1}$, where $\B\subset\R^{n+1}$, measurable in $t$ for all $(x,a)\in\R^n\times\B$ and continuous in $(x,a)$ for all $t\in[0,T]$ such that for every $t\in[0,T]$, $x\in\R^n$
%\vspace{-2mm}
\begin{equation}\label{do-th-parame-lo1-r1}
E_{\lambda,L}(t,x)\subset e\left(t,x,\B\right)\subset E_L(t,x).
\end{equation}
Moreover, for any $R>0$ and for all $t\in[0,T]\setminus\mathcal{N}_R$,\, $x,y\in \B_R$,\, $a,b\in \B$
\begin{equation}\label{do-th-parame-lo1-r2}
\left\{\begin{array}{l}
|e(t,x,a)-e(t,y,b)|\leq 10(n+1)[\,k_R(t)|x-y|+w_R(t,|x-y|)+|M(t,x)\,a-M(t,y)\,b|\,],\\[0mm]
\it{where}\;\;M(t,x):=|\lambda(t,x)|+|H(t,x,0)|+c(t)(1+|x|)+1.
\end{array}\right.
\end{equation}
Furthermore, if $H$, $\lambda(\cdot,\cdot)$, $c(\cdot)$ are continuous, so is $e$.
\end{Th}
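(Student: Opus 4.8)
The plan is to deduce this theorem directly from the abstract parametrization result Theorem~\ref{th-oparam}, applied in dimension $m=n+1$ to the epigraphical map $E(t,x):=E_L(t,x)$, with scaling function $M(t,x):=|\lambda(t,x)|+|H(t,x,0)|+c(t)(1+|x|)+1$ and auxiliary map $Q(t,x):=E_{\lambda,L}(t,x)$. First I would verify that $E_L$ meets the structural hypotheses of Theorem~\ref{th-oparam}: by Corollary~\ref{wrow-wm}, under \te{(H1)--(H3)} the values $E_L(t,x)$ are nonempty, closed and convex (M1), $E_L(\cdot,x)$ is measurable (M4), and $E_L(t,\cdot)$ has a closed graph (M2) and is lower semicontinuous (M3). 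The function $M$ is measurable in $t$ and continuous in $x$, since $\lambda$ has these properties by \eqref{wol1}, $H(\cdot,x,0)$ is measurable and $H(t,\cdot,0)$ continuous by \te{(H1)--(H2)}, and $c$ is measurable; moreover $M\geq 1>0$, so $M$ is $\R_+$-valued.

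The one genuinely substantive step is to check the two hypotheses on $Q=E_{\lambda,L}$ required by the final assertion of Theorem~\ref{th-oparam}, namely $Q(t,x)\subset E(t,x)$ and $\|Q(t,x)\|\leq M(t,x)$. The inclusion $E_{\lambda,L}(t,x)\subset E_L(t,x)$ is immediate from the two definitions. For the norm bound, fix $(v,\eta)\in E_{\lambda,L}(t,x)$, so that $L(t,x,v)\leq\eta\leq\lambda(t,x)$; in particular $v\in\D L(t,x,\cdot)$, whence $|v|\leq c(t)(1+|x|)$ by (M5) (this is where \te{(H4)} enters). For the last coordinate, the upper bound gives $\eta\leq|\lambda(t,x)|$, while the Fenchel inequality $L(t,x,v)\geq\langle v,0\rangle-H(t,x,0)\geq-|H(t,x,0)|$ gives $\eta\geq-|H(t,x,0)|$; hence $|\eta|\leq|\lambda(t,x)|+|H(t,x,0)|$. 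Combining and using $\sqrt{|v|^2+\eta^2}\leq|v|+|\eta|$, one obtains $|(v,\eta)|\leq c(t)(1+|x|)+|\lambda(t,x)|+|H(t,x,0)|\leq M(t,x)$, so that $\|E_{\lambda,L}(t,x)\|\leq M(t,x)$ as required.

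With the hypotheses in place, Theorem~\ref{th-oparam} produces a map $e$ that is measurable in $t$ and continuous in $(x,a)$, and, restricting its control set to $A=\B\subset\R^{n+1}$, the final clause of that theorem reads $E_{\lambda,L}(t,x)\subset e(t,x,\B)\subset E_L(t,x)$, which is exactly \eqref{do-th-parame-lo1-r1}. For the Lipschitz estimate \eqref{do-th-parame-lo1-r2}, I would insert the Hausdorff bound of Corollary~\ref{hlc-cor-ner}, namely $\mathscr{H}(E_L(t,x),E_L(t,y))\leq 2k_R(t)|x-y|+2w_R(t,|x-y|)$, into \eqref{th-oparam-n} with $m=n+1$; the factor $5m=5(n+1)$ against the $2$'s yields the constant $10(n+1)$ on the $k_R$ and $w_R$ terms, and the trivial bound $5(n+1)\leq 10(n+1)$ on the remaining $|M(t,x)\,a-M(t,y)\,b|$ term delivers \eqref{do-th-parame-lo1-r2}. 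Finally, when $H$, $\lambda$, $c$ are continuous, $M$ is continuous and $E_L$ has a closed graph and is lower semicontinuous by (M6), so the continuity clause of Theorem~\ref{th-oparam} gives that $e$ is continuous. The main point requiring care is the norm bound on $Q$; everything else is a matter of matching hypotheses to Theorem~\ref{th-oparam} and Corollary~\ref{hlc-cor-ner}.
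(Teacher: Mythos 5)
Your proof is correct and follows essentially the same route as the paper's: both apply Theorem~\ref{th-oparam} with $E=E_L$, $M(t,x)=|\lambda(t,x)|+|H(t,x,0)|+c(t)(1+|x|)+1$ and $Q=E_{\lambda,L}$, verify $\|E_{\lambda,L}(t,x)\|\leq M(t,x)$ via (M5) together with the Fenchel bound $L(t,x,v)\geq -|H(t,x,0)|$, and obtain \eqref{do-th-parame-lo1-r2} by inserting Corollary~\ref{hlc-cor-ner} into \eqref{th-oparam-n}. The only difference is that you make the measurability/continuity of $M$ and the constant bookkeeping slightly more explicit than the paper does.
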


\begin{proof}
Let $M(t,x):=|\lambda(t,x)|+|H(t,x,0)|+c(t)(1+|x|)+1$ and $E(t,x):=E_L(t,x)$ for every
$t\in[0,T]$, $x\in\R^n$. Using hypotheses and Corollary \ref{wrow-wm} we have the
maps $M$ and $E$ satisfy conditions of Theorem \ref{th-oparam}. Therefore, there exists a map $e:[0,T]\times\R^n\times\B\rightarrow\R^{n+1}$, where $\B\subset\R^{n+1}$, measurable in $t$ for all $(x,a)\in\R^n\times\B$ and continuous in $(x,a)$ for all $t\in[0,T]$ such that for every $t\in[0,T]$, $x,y\in\R^n$, $a,b\in\B$ it satisfies the inequality
\eqref{th-oparam-n}. By the inequality \eqref{th-oparam-n} and Corollary~\ref{hlc-cor-ner} we have
\begin{eqnarray*}
|e(t,x,a)-e(t,y,b)| &\leq & 5(n+1)[\,\mathscr{H}(E_L(t,x),E_L(t,y))+|M(t,x)\,a-M(t,y)\,b|\,]\\
&\leq & 10(n+1)[\,k_R(t)|x-y|+w_R(t,|x-y|)\,]+5(n+1)|M(t,x)a-M(t,y)b|
\end{eqnarray*}
for all $t\in[0,T]\setminus\mathcal{N}_R$,\, $x,y\in \B_R$,\, $a,b\in \B$ and $R>0$.
It means that the inequality  \eqref{do-th-parame-lo1-r2} is satisfied. Additionally, if
we assume that $H$, $\lambda(\cdot,\cdot)$, $c(\cdot)$ are continuous, then the map $M$ is
also continuous. Using hypotheses, Corollary \ref{wrow-wm} and  Theorem \ref{th-oparam} we have the map $e$ is continuous.

Let $Q(t,x):=E_{\lambda,L}(t,x)$ for all $t\in[0,T]$, $x\in\R^n$. We prove that  $Q$
satisfies the assumptions of  Theorem~\ref{th-oparam}. Obviously, $E_{\lambda,L}(t,x)\subset E_L(t,x)$ for every  $t\in[0,T]$, $x\in\R^n$. Thus, it suffices to show that
$\|E_{\lambda,L}(t,x)\|\leq M(t,x)$ for every $t\in[0,T]$, $x\in\R^n$. We fix $t\in[0,T]$, $x\in\R^n$ and  $(v,\eta)\in E_{\lambda,L}(t,x)$. Then by the definition of the set
$E_{\lambda,L}(t,x)$, we have that $L(t,x,v)\leq\eta\leq\lambda(t,x)$. Therefore, using assumptions and the equality $L(t,x,\cdot\,)=H^{\ast}(t,x,\cdot\,)$, we obtain
$$v\in\D L(t,x,\cdot)\;\;\;\te{and}\;\;\;-|H(t,x,0)|\leq\eta\leq\lambda(t,x).$$
By the point (M5) in Corollary \ref{wrow-wm} we have $\|\D L(t,x,\cdot)\|\leq c(t)(1+|x|)$. So, $|v|\leq c(t)(1+|x|)$ and $|\eta|\leq|\lambda(t,x)|+|H(t,x,0)|$. Therefore, $|(v,\eta)|\leq |\lambda(t,x)|+|H(t,x,0)|+c(t)(1+|x|)< M(t,x)$.
It means that a set-valued map $Q$ satisfies the hypotheses of Theorem \ref{th-oparam}.
By this Theorem, the condition  \eqref{do-th-parame-lo1-r1} is satisfied.
\end{proof}

\begin{Rem}
Let $e:[0,T]\times\R^n\times\B\rightarrow\R^{n+1}$ be the function from Theorem \ref{do-th-parame-lo1}. We define two functions $f:[0,T]\times\R^n\times \B\rightarrow\R^n$ and $l:[0,T]\times\R^n\times \B\rightarrow\R$ by formulas:
\begin{equation*}
f(t,x,a):=\pi_v(e(t,x,a))\;\;\;\te{and}\;\;\;l(t,x,a):=\pi_\eta(e(t,x,a)),
\end{equation*}
where  $\pi_v(v,\eta)=v$ and $\pi_\eta(v,\eta)=\eta$ for every $v\in\R^n$ and $\eta\in\R$. Then for every $t\in[0,T]$, $x\in\R^n$, $a\in \B$ the following equality holds
\begin{equation*}
e(t,x,a)=(f(t,x,a),l(t,x,a)).
\end{equation*}
So, for every $t\in[0,T]$, $x,y\in\R^n$, $a,b\in \B$ we obtain
\begin{equation*}
|f(t,x,a)-f(t,y,b)| \;\leq\;  |e(t,x,a)-e(t,y,b)|,\quad |l(t,x,a)-l(t,y,b)|\; \leq \; |e(t,x,a)-e(t,y,b)|.
\end{equation*}
By Corollary \ref{wrow-wm}, Proposition \ref{prop-reprezentacja H-ogr} and Theorem \ref{do-th-parame-lo1}, it means that functions $f,l$ satisfy all conditions all the assertions of Theorem \ref{th-rprez-glo12}.
\end{Rem}

%\vspace*{-3mm}
%\pagebreak

%%%%%%%%%%%%%%%%%%%%%%%%%%%%%%%%%%%%%%%%%%%%%%%%%%%%%%%%%%%%%%%%%%%%%%%%%%%%%%
%%%%%%%%%%%%%%%%%%%%%%%%%%%%%%%%%%%%%%%%%%%%%%%%%%%%%%%%%%%%%%%%%%%%%%%%%%%%%%
%%%%%%%%%%%%%%%%%%%%%%%%%%%%%%%%%%%%%%%%%%%%%%%%%%%%%%%%%%%%%%%%%%%%%%%%%%%%%%
%%%%%%%%%%%%%%%%%%%%%%%%%%%%%%%%%%%%%%%%%%%%%%%%%%%%%%%%%%%%%%%%%%%%%%%%%%%%%%

\section{Proofs of stability theorems}\label{thms-stab}

\noindent We show here that the faithful representation obtained in this paper is stable. To do this,
we need a few auxiliary definitions and facts. For a sequence $\{K_i\}_{i\in\N}$ of subsets of $\R^m$, the \it{upper limit} is the set
\begin{equation*}
\limsup_{i\to\infty}K_i:=\{\,x\in\R^m\,\mid\,\te{there exists}\;x_{j}\to x\;\te{such that}\;x_{j}\in K_{i_j}\;\te{for all large}\;j\in\N\, \},
\end{equation*}
while the  \it{lower limit} is the set
\begin{equation*}
\hspace{-3.5mm}\liminf_{i\to\infty}K_i:=\{\,x\in\R^m\,\mid\,\te{there exists}\;x_{i}\to x\;\te{such that}\;x_{i}\in K_{i}\;\te{for all large}\;i\in\N\, \}.
\end{equation*}
The \it{limit} of a sequence exists if the upper and lower limit sets are equal:
\begin{equation*}
\lim_{i\to\infty}K_i:= \limsup_{i\to\infty}K_i=\liminf_{i\to\infty}K_i.
\end{equation*}
For nonempty, closed sets $K_i$ and $K$, one has $\lim_{i\to\infty}K_i=K$  if and only if $\lim_{i\to\infty}d(x,K_i)=d(x,K)$ for every $x\in\R^m$ (see \cite[Cor. 4.7]{R-W}).
Thus, using the inequality $|d(x,K)-d(y,K)|\leq|x-y|$, that is satisfied for every
$x,y\in\R^m$ and every nonempty set $K\subset\R^m$, we obtain
 \begin{equation}\label{zmkno}
\lim_{i\to\infty}x_i=x,\quad\lim_{i\to\infty}K_i=K\quad\Longrightarrow\quad\lim_{i\to\infty}d(x_i,K_i)=d(x,K).
\end{equation}
If $K_i$ and $K$ are nonempty, closed subsets of a given compact set, then by \cite[Chap. 4, Sec C.]{R-W}
\begin{equation}\label{zmkh}
\lim_{i\to\infty}K_i=K\;\iff\; \lim_{i\to\infty}\mathscr{H}(K_i,K)=0.
\end{equation}

\begin{Th}[\te{\cite[Thm. 4.32]{R-W}}]\label{thmozkp}
Let  $K_i$ and $D_i$ be convex sets in $\R^m$ for all $i\in\N$. If convex sets $K$ and $D$ satisfy $K\cap \I D\neq\emptyset$, then the following implication holds: $$\lim_{i\to\infty}K_i=K,\quad \lim_{i\to\infty}D_i=D\quad\Longrightarrow\quad \lim_{i\to\infty}\left(\,K_i\cap D_i\,\right)=K\cap D.$$
\end{Th}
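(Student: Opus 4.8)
The plan is to obtain the set limit by proving the two inclusions $\limsup_{i\to\infty}(K_i\cap D_i)\subset K\cap D$ and $K\cap D\subset\liminf_{i\to\infty}(K_i\cap D_i)$; since the lower limit is always contained in the upper limit, the two together force $\lim_{i\to\infty}(K_i\cap D_i)=K\cap D$. The outer inclusion is the easy half and uses no constraint qualification: if $x\in\limsup_{i\to\infty}(K_i\cap D_i)$, choose $x_j\to x$ with $x_j\in K_{i_j}\cap D_{i_j}$; then $x_j\in K_{i_j}$ gives $x\in\limsup_{i\to\infty}K_i=K$ and $x_j\in D_{i_j}$ gives $x\in\limsup_{i\to\infty}D_i=D$, both equalities coming from the hypotheses $\lim_{i\to\infty}K_i=K$ and $\lim_{i\to\infty}D_i=D$. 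Hence $x\in K\cap D$.

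For the inner inclusion I would exploit the qualification $K\cap\I D\neq\emptyset$ together with the fact that the lower limit of any sequence of sets is closed. Fix $\bar x\in K\cap\I D$ and let $x\in K\cap D$ be arbitrary. By the line-segment principle for convex sets, the points $x_t:=(1-t)\,\bar x+t\,x$ satisfy $x_t\in K$ (convexity of $K$) and $x_t\in\I D$ for every $t\in[0,1)$ (convexity of $D$ with $\bar x\in\I D$ and $x\in D$), while $x_t\to x$ as $t\uparrow 1$. Thus it suffices to show that every interior point $\bar y\in K\cap\I D$ belongs to $\liminf_{i\to\infty}(K_i\cap D_i)$: once this is known, closedness of the lower limit together with $x_t\to x$ yields $x\in\liminf_{i\to\infty}(K_i\cap D_i)$, and the arbitrariness of $x$ gives the inclusion.

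The heart of the argument, and the step I expect to be the main obstacle, is this sub-claim that $\bar y\in K\cap\I D$ lies in $\liminf_{i\to\infty}(K_i\cap D_i)$. Since $\bar y\in K=\liminf_{i\to\infty}K_i$, I can choose $a_i\in K_i$ with $a_i\to\bar y$; the real work is to force $a_i\in D_i$ for large $i$, which requires upgrading the pointwise convergence $\lim_{i\to\infty}D_i=D$ into a uniform neighborhood statement, and this is exactly where convexity enters. Choosing $\varepsilon>0$ with the closed ball $B(\bar y,\varepsilon)\subset D$, the $2m$ points $\bar y\pm\varepsilon e_k$ lie in $D=\liminf_{i\to\infty}D_i$, so each is a limit of points drawn from $D_i$; for large $i$ these approximants lie arbitrarily close to their targets, and their convex hull, contained in $D_i$ by convexity, then contains a fixed ball $B(\bar y,\delta)$ with $\delta>0$ independent of $i$ (the cross-polytope $\te{conv}\{\bar y\pm\varepsilon e_k\}$ already contains $B(\bar y,\varepsilon/\sqrt{m})$, and a sufficiently small vertex perturbation preserves a slightly smaller ball about $\bar y$). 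Consequently $a_i\in B(\bar y,\delta)\subset D_i$ for all large $i$, so $a_i\in K_i\cap D_i$ and $a_i\to\bar y$, proving $\bar y\in\liminf_{i\to\infty}(K_i\cap D_i)$. Combining the two inclusions yields $\lim_{i\to\infty}(K_i\cap D_i)=K\cap D$. The delicate point throughout is precisely this uniform-ball estimate: without the interior hypothesis the intersections $K_i\cap D_i$ may degenerate in the limit, and the role of $K\cap\I D\neq\emptyset$ is exactly to guarantee that the constraint set $D_i$ is satisfied with room to spare around the approximating points.
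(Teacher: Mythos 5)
Your proof is correct, but note that there is nothing in the paper to compare it against: the paper does not prove this statement at all, it imports it verbatim from Rockafellar--Wets \cite[Thm.\ 4.32]{R-W} and uses it as a black box in the proof of Theorem \ref{thm-zjnzz}. What you have produced is therefore a self-contained elementary substitute for that citation, and the argument is sound in every step. Your decomposition --- the unconditional inclusion $\limsup_{i}(K_i\cap D_i)\subset K\cap D$, plus the constraint-qualified inclusion $K\cap D\subset\liminf_{i}(K_i\cap D_i)$ obtained by sliding an arbitrary point of $K\cap D$ along a segment toward a point of $K\cap\I D$ and invoking closedness of lower limits --- is the standard architecture; the line-segment principle applies because $D$ is convex with $\bar x\in\I D$, and lower limits of set sequences are indeed always closed. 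The one step you rightly flag as the crux, the uniform ball $\B(\bar y,\delta)\subset D_i$ for all large $i$, does work as sketched, and can be made airtight with a support-function argument: if $\te{conv}\{p_1,\dots,p_{2m}\}\supset\B(\bar y,r)$ and $|q_j-p_j|\leq\eta$ for all $j$, then for every unit vector $u$ one has $\max_j\langle u,q_j\rangle\geq\max_j\langle u,p_j\rangle-\eta\geq\langle u,\bar y\rangle+r-\eta$, and since $\te{conv}\{q_1,\dots,q_{2m}\}$ is compact and convex this yields $\B(\bar y,r-\eta)\subset\te{conv}\{q_1,\dots,q_{2m}\}$; taking $r=\varepsilon/\sqrt{m}$ and $\eta=\varepsilon/(2\sqrt{m})$ gives $\delta=\varepsilon/(2\sqrt{m})$ independent of $i$, as you claim. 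What your route buys is self-containedness and elementarity (separation, convex hulls, nothing beyond the definitions of set limits); what the paper's citation buys is brevity and the full generality of the Rockafellar--Wets result, which is established there within their systematic calculus of set convergence rather than by a hands-on construction.
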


\vspace*{-5mm}
\pagebreak

\subsection{Convergence of set-valued maps $\pmb{E_{L_i}(\cdot,\cdot)}$}
In this subsection we show that convergence of  Hamiltonians $H_i$ implies
convergence of set-valued maps $E_{L_i}(\cdot,\cdot)$.

\begin{Prop}\label{epiconv}
Let $H_i,H:[0,T]\times\R^n\times\R^n\to\R$, $i\in\N$ be continuous and satisfy \te{(H3)}. We suppose that $L_i$, $L$, $i\in\N$ are given by \eqref{tran1}.  If $H_i$ converge to $H$ uniformly on compacts in $[0,T]\times\R^n\times\R^n$, then
\begin{enumerate}
\item[\textnormal{(i)}] $\liminf\limits_{i\to\infty} L_i(t_i,x_i,v_i)\geq L(t,x,v)$\; for every sequence\; $(t_i,x_i,v_i)\rightarrow (t,x,v)$,
\item[\textnormal{(ii)}] $\forall\,(t,x,v)\in[0,T]\times\R^n\times\R^n\;\;\forall\,(t_i,x_i)\rightarrow (t,x)\;\;\exists\,v_i\rightarrow v\;:\;L_i(t_i,x_i,v_i)\rightarrow L(t,x,v)$.
\end{enumerate}
\end{Prop}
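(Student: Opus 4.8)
The plan is to read conditions (i) and (ii) together as the single statement that, along any sequence $(t_i,x_i)\to(t,x)$, the functions $v\mapsto L_i(t_i,x_i,v)$ epi-converge to $v\mapsto L(t,x,v)$, and then to derive this epi-convergence of the Lagrangians from the corresponding epi-convergence of the Hamiltonians by means of the bicontinuity of the Legendre--Fenchel transform, i.e. Wijsman's Theorem \cite[Thm. 11.34]{R-W}. Indeed, condition (i) is precisely the ``$\liminf$'' half of the definition of epi-convergence, while condition (ii) is the existence of a recovery sequence strengthened to genuine convergence, which follows once the recovery (``$\limsup$'') half is combined with (i).

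First I would fix an arbitrary $(t,x)$ and an arbitrary sequence $(t_i,x_i)\to(t,x)$, and put $g_i:=H_i(t_i,x_i,\cdot)$ and $g:=H(t,x,\cdot)$. The key preliminary step is to show that $g_i\to g$ uniformly on every compact $K\subset\R^n$. Splitting
$|H_i(t_i,x_i,p)-H(t,x,p)|\leq |H_i(t_i,x_i,p)-H(t_i,x_i,p)|+|H(t_i,x_i,p)-H(t,x,p)|$,
the first term is controlled by the uniform convergence $H_i\to H$ on the compact set $\overline{\{(t_i,x_i)\}}\times K\subset[0,T]\times\R^n\times\R^n$, and the second by the uniform continuity of the continuous map $H$ on that same compact set together with $(t_i,x_i)\to(t,x)$. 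Since by (H3) each $g_i$ and $g$ is a finite convex function on $\R^n$, hence continuous (so proper and lower semicontinuous), uniform convergence on compacts yields continuous convergence $g_i(p_i)\to g(p)$ for every $p_i\to p$, and therefore epi-convergence of $g_i$ to $g$.

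Next I would invoke Wijsman's Theorem, which asserts that for proper, lower semicontinuous, convex functions the conjugation operation is bicontinuous for epi-convergence; hence $g_i^{\ast}$ epi-converge to $g^{\ast}$, that is, $L_i(t_i,x_i,\cdot)$ epi-converge to $L(t,x,\cdot)$. Unpacking the definition then gives exactly the two assertions: the inequality $\liminf_i L_i(t_i,x_i,v_i)\geq L(t,x,v)$ for every $v_i\to v$ is (i); and the existence of a recovery sequence $v_i\to v$ with $\limsup_i L_i(t_i,x_i,v_i)\leq L(t,x,v)$, together with (i), forces $L_i(t_i,x_i,v_i)\to L(t,x,v)$, which is (ii). The case $L(t,x,v)=+\infty$ in (ii) is handled by the constant choice $v_i=v$, since (i) already forces $L_i(t_i,x_i,v)\to+\infty$.

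I expect the only real obstacle to be the preliminary step, namely promoting uniform-on-compacts convergence of $H_i\to H$ into uniform-on-compacts convergence of the diagonal functions $g_i=H_i(t_i,x_i,\cdot)$, where the dependence on $i$ and on the moving parameters $(t_i,x_i)$ must be treated simultaneously; once $g_i$ epi-converge to $g$ is established, the conclusion is a black-box application of Wijsman's Theorem. I would also record that (i) admits a self-contained proof bypassing Wijsman: from $L_i(t_i,x_i,v_i)\geq\langle v_i,p\rangle-H_i(t_i,x_i,p)$ one lets $i\to\infty$ with $p$ fixed to obtain $\liminf_i L_i(t_i,x_i,v_i)\geq\langle v,p\rangle-H(t,x,p)$, and then takes the supremum over $p\in\R^n$.
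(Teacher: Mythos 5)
Your proposal is correct and takes essentially the same route as the paper: the paper's entire proof is the single line that the proposition ``is a consequence of Wijsman's Theorem \cite[Thm. 11.34]{R-W}'', and your argument merely supplies the details this citation leaves implicit (locally uniform, hence epigraphical, convergence of the diagonal Hamiltonians $H_i(t_i,x_i,\cdot)$ to $H(t,x,\cdot)$, conjugation via Wijsman, and the unpacking of epi-convergence of $L_i(t_i,x_i,\cdot)$ into assertions (i) and (ii)). All of these details check out, including the treatment of the case $L(t,x,v)=+\infty$ in (ii).
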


\noindent Proposition \ref{epiconv} is a consequence of Wijsman's Theorem \cite[Thm. 11.34]{R-W}.

\begin{Prop}\label{klconv}
Let $H_i,H:[0,T]\times\R^n\times\R^n\to\R$, $i\in\N$ be continuous and satisfy \te{(H3)}. We assume $L_i$, $L$, $i\in\N$ are given by \eqref{tran1}.  If $H_i$ converge to $H$ uniformly on compacts in $[0,T]\times\R^n\times\R^n$, then
\begin{equation*}
\lim_{i\to\infty}E_{L_i}(t_i,x_i)=E_L(t,x)\;\;\it{for every sequence}\;\;(t_i,x_i)\to(t,x).
\end{equation*}
\end{Prop}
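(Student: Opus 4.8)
The plan is to read conditions (i) and (ii) of Proposition~\ref{epiconv} as the statement that, along any sequence $(t_i,x_i)\to(t,x)$, the functions $v\mapsto L_i(t_i,x_i,v)$ epi-converge to $v\mapsto L(t,x,v)$. Since $E_{L_i}(t_i,x_i)=\E L_i(t_i,x_i,\cdot)$ and $E_L(t,x)=\E L(t,x,\cdot)$ are exactly the epigraphs of these functions, the desired set convergence is precisely the equivalence between epi-convergence of functions and Painlev\'e--Kuratowski convergence of their epigraphs. Because $\liminf_i K_i\subseteq\limsup_i K_i$ always holds, the equality $\lim_i E_{L_i}(t_i,x_i)=E_L(t,x)$ reduces to proving the two inclusions $\limsup_i E_{L_i}(t_i,x_i)\subseteq E_L(t,x)$ and $E_L(t,x)\subseteq\liminf_i E_{L_i}(t_i,x_i)$, which I would treat with conditions (i) and (ii) respectively.

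For the first inclusion I would fix $(v,\eta)\in\limsup_i E_{L_i}(t_i,x_i)$, so that there are a subsequence $\{i_j\}$ and points $(v_j,\eta_j)\to(v,\eta)$ with $L_{i_j}(t_{i_j},x_{i_j},v_j)\le\eta_j$, and aim to deduce $L(t,x,v)\le\eta$, i.e. $(v,\eta)\in E_L(t,x)$. Condition (i) yields $\liminf_i L_i(t_i,x_i,u_i)\ge L(t,x,v)$ along any full sequence $u_i\to v$; applying it to the sequence equal to $v_j$ on the indices $i_j$ and equal to $v$ elsewhere, and using that the lower limit over a subsequence dominates the lower limit over the full sequence, I would obtain $L(t,x,v)\le\liminf_j L_{i_j}(t_{i_j},x_{i_j},v_j)\le\liminf_j\eta_j=\eta$.

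For the second inclusion I would fix $(v,\eta)\in E_L(t,x)$, so that $L(t,x,v)\le\eta$ with $L(t,x,v)$ finite, since $L(t,x,\cdot)$ is proper by (L3). Condition (ii) provides $v_i\to v$ with $L_i(t_i,x_i,v_i)\to L(t,x,v)$; setting $\eta_i:=L_i(t_i,x_i,v_i)+(\eta-L(t,x,v))$ gives $\eta_i\to\eta$ and, because $\eta-L(t,x,v)\ge0$, the membership $(v_i,\eta_i)\in E_{L_i}(t_i,x_i)$. Hence $(v_i,\eta_i)\to(v,\eta)$ exhibits $(v,\eta)\in\liminf_i E_{L_i}(t_i,x_i)$, and the two inclusions together yield the claim.

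The main obstacle is the bookkeeping in the first inclusion: condition (i) is stated for full sequences converging to $v$, whereas the upper limit only supplies a subsequence. The constant-fill extension described above, combined with the monotonicity of $\liminf$ under passage to subsequences, resolves this cleanly. The remaining points are routine once one observes that properness of $L(t,x,\cdot)$ guarantees that $L(t,x,v)$ is a genuine real number wherever an epigraph point lies above it, so the shift defining $\eta_i$ is well posed.
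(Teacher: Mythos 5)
Your proof is correct and takes essentially the same route as the paper: the inclusion $\limsup_{i}E_{L_i}(t_i,x_i)\subset E_L(t,x)$ via condition (i) of Proposition~\ref{epiconv}, and the inclusion $E_L(t,x)\subset\liminf_{i}E_{L_i}(t_i,x_i)$ via condition (ii) with exactly the same shift $\eta_i:=L_i(t_i,x_i,v_i)+\eta-L(t,x,v)$ and the same appeal to properness of $L(t,x,\cdot)$. Your constant-fill extension in the first inclusion is a small point of extra care that the paper's proof passes over silently (it applies condition (i) directly to the subsequence $L_{i_j}(t_{i_j},x_{i_j},v_j)$ without comment).
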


\begin{proof}
First, we prove that
\begin{equation}\label{in-gk-1}
\limsup_{i\to\infty}E_{L_i}(t_i,x_i)\subset E_L(t,x),\quad \forall\;(t_i,x_i)\to(t,x).
\end{equation}
We suppose that $(t_i,x_i)\to(t,x)$. Let $(v,\eta)\in\limsup_{i\to\infty}E_{L_i}(t_i,x_i)$.
By the definition of upper limit sets, there exists the sequence $(v_j,\eta_j)\to (v,\eta)$ such that  $(v_j,\eta_j)\in E_{L_{i_j}}(t_{i_j},x_{i_j})$ for all large $j\in\N$. Hence we have $L_{i_j}(t_{i_j},x_{i_j},v_{j})\leq \eta_j$ for all large $j\in\N$. By the point (i) of Proposition \ref{epiconv},
\begin{equation*}
L(t,x,v)\leq \liminf\limits_{j\to\infty}L_{i_j}(t_{i_j},x_{i_j},v_{j})\leq \lim\limits_{j\to\infty}\eta_j=\eta.
\end{equation*}
Therefore $(v,\eta)\in E_L(t,x)$, so the inclusion \eqref{in-gk-1} is true.

Thus, the equality of our Proposition is true if we prove that
\begin{equation}\label{in-gk-2}
 E_L(t,x)\subset\liminf_{i\to\infty}E_{L_i}(t_i,x_i),\quad \forall\;(t_i,x_i)\to(t,x).
\end{equation}
We assume $(t_i,x_i)\to(t,x)$. Let $(v,\eta)\in E_L(t,x)$. Then $L(t,x,v)\leq \eta$. Therefore the value $L(t,x,v)$ is finite because the function $v\to L(t,x,v)$ is proper. By the point~(ii) of Proposition \ref{epiconv}, there exists a sequence $v_i\to v$ such that $L_i(t_i,x_i,v_i)\to L(t,x,v)$.
Therefore for large $i\in\N$, the values $L_i(t_i,x_i,v_i)$ have to be finite because
the value $L(t,x,v)$ is finite. We notice that
\begin{equation*}
L_i(t_i,x_i,v_i)\leq L_i(t_i,x_i,v_i)+\eta-L(t,x,v)=:\eta_i.
\end{equation*}
Therefore ($v_i,\eta_i)\to (v,\eta)$ and $(v_i,\eta_i)\in E_{L_i}(t_i,x_i)$ for large $i\in\N$. It means that $(v,\eta)$ belongs to the set $\liminf_{i\to\infty}E_{L_i}(t_i,x_i)$, so the proof of inclusion \eqref{in-gk-2} is over.
\end{proof}

\begin{Prop}\label{gklconv}
Let $H_i,H:[0,T]\times\R^n\times\R^n\to\R$, $i\in\N$ be continuous and satisfy \te{(H3)}. We suppose that $L_i$, $L$, $i\in\N$ are given by \eqref{tran1}.  If $H_i$ and  $H$ satisfy the condition \eqref{thm-rep-stab5-11}, then
\begin{equation*}
    \sup_{(t,x)\,\in\,[0,T]\times\B_R}\mathscr{H}(E_{L_i}(t,x),E_{L}(t,x))\;
        \xrightarrow[i\to\infty]{}\;0,\quad\forall\;R>0.
\end{equation*}
\end{Prop}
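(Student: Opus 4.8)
The plan is to compute the support functions of the two epigraphs explicitly and to read the required uniform bound directly off the normalized convergence \eqref{thm-rep-stab5-11}, rather than trying to upgrade the pointwise set-convergence of Proposition~\ref{klconv}. First I would record that, since $H_i,H$ are continuous and satisfy \te{(H3)}, Proposition~\ref{prop2-fmw} and Corollary~\ref{wrow-wm} ensure that every $E_{L_i}(t,x)$ and $E_L(t,x)$ is a nonempty closed convex subset of $\R^{n+1}$, so their (extended) support functions $\sigma(E_{L_i}(t,x),\cdot)$ and $\sigma(E_L(t,x),\cdot)$, understood as suprema with possible value $+\infty$, determine the sets. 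For $q=(p,-s)$ with $s>0$, using $\eta\geq L(t,x,v)$ and $H(t,x,\cdot)=L^{\ast}(t,x,\cdot)$, one obtains the key identity
\begin{equation*}
\sigma(E_L(t,x),(p,-s))=\sup_{v}\{\langle p,v\rangle-s\,L(t,x,v)\}=s\,H(t,x,p/s),
\end{equation*}
and the analogous identity for $E_{L_i}$. For $s<0$ both support functions equal $+\infty$ (the epigraphs are unbounded in the $+\eta$ direction), and for $s=0$ both equal the recession function $H^{\infty}(t,x,p)=\lim_{s\downarrow0}s\,H(t,x,p/s)$, possibly $+\infty$.

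Set $\varepsilon_i(R):=\sup_{(t,x,p)\in[0,T]\times\B_R\times\R^n}|H_i(t,x,p)-H(t,x,p)|/(1+|p|)$, which is finite for large $i$ and tends to $0$ by \eqref{thm-rep-stab5-11}. The core estimate is then immediate: for $(t,x)\in[0,T]\times\B_R$ and any unit vector $q=(p,-s)$ with $s>0$ (so $s^2+|p|^2=1$),
\begin{equation*}
|\sigma(E_{L_i}(t,x),q)-\sigma(E_L(t,x),q)|=s\,|H_i(t,x,p/s)-H(t,x,p/s)|\leq s\,\varepsilon_i(R)\Big(1+\tfrac{|p|}{s}\Big)=\varepsilon_i(R)(s+|p|)\leq\sqrt{2}\,\varepsilon_i(R),
\end{equation*}
where the last step uses $s+|p|\leq\sqrt{2}$ on the unit sphere of $\R^{n+1}$. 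Letting $s\downarrow0$ and using homogeneity together with the recession-function limit propagates the same bound to the directions $q=(p,0)$: either both recession functions are finite and within $\varepsilon_i(R)$, or (by lower semicontinuity of support functions) both are $+\infty$. In particular the domains of the two support functions coincide, namely $\{(p,-s):s\geq 0\}$, so $E_{L_i}(t,x)$ and $E_L(t,x)$ have a common recession cone; on the directions with $s<0$ both support functions are $+\infty$ and impose no constraint.

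Finally I would invoke the support-function (Hörmander) characterization of the extended Hausdorff distance: for nonempty closed convex sets with a common recession cone, $\mathscr{H}(K,D)=\sup_{|q|=1}|\sigma(K,q)-\sigma(D,q)|$, with the convention that directions where both support functions are $+\infty$ contribute $0$. By the estimates above this supremum is at most $\sqrt{2}\,\varepsilon_i(R)$ for every $(t,x)\in[0,T]\times\B_R$, so
\begin{equation*}
\sup_{(t,x)\in[0,T]\times\B_R}\mathscr{H}(E_{L_i}(t,x),E_L(t,x))\leq\sqrt{2}\,\varepsilon_i(R)\xrightarrow[i\to\infty]{}0,
\end{equation*}
which is the claim. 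The point I expect to be the main obstacle, and would write out most carefully, is the boundary $s=0$: since only \te{(H1)}--\te{(H3)} and \eqref{hip_l1} are assumed here (not \te{(H4)}), the effective domains $\D L(t,x,\cdot)$ need not be bounded, so the epigraphs are genuinely unbounded and one must verify that the uniform bound on $\{s>0\}$ really forces matching recession cones and carries over to $\{s=0\}$, so that the extended Hausdorff distance is finite and controlled. This is exactly what the homogeneous estimate above and the recession-function limit deliver, and it is also the reason the plain Kuratowski convergence of Proposition~\ref{klconv} alone would not suffice.
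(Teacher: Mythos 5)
Your proof is correct, but it takes a genuinely different route from the paper's. The paper dispatches Proposition \ref{gklconv} by reusing its earlier duality machinery: the two-sided bound $|H_i(t,x,p)-H(t,x,p)|\leq\varepsilon_i(R)(1+|p|)$ on $[0,T]\times\B_R\times\R^n$ is an instance of condition (a) of Proposition \ref{0rwwl1} (with $H_i,H$ in place of $H(t,x,\cdot),H(t,y,\cdot)$ and with $k_R(t)|x-y|$ and $w_R(t,|x-y|)$ both replaced by the constant $\varepsilon_i(R)$), so running the epi-sum argument of Proposition \ref{0rwwl1} (via Lemma \ref{0epi-suma-lem}) and then the elementary equivalence of Proposition \ref{0lnrwwl1} yields the mutual inclusions $E_{L_i}(t,x)\subset E_L(t,x)+(\varepsilon_i(R)\,\B)\times(\varepsilon_i(R)\,[-1,1])$ and conversely, whence a uniform Hausdorff bound exactly as in Corollary \ref{hlc-cor-ner}. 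You instead stay on the dual side throughout: you identify $\sigma(E_L(t,x),(p,-s))=s\,H(t,x,p/s)$ for $s>0$ (the perspective of $H$), extend to the recession directions $s=0$ by a monotone limit, and invoke the support-function (H\"ormander) characterization of the extended Hausdorff distance between closed convex sets. What your route buys is a self-contained argument with the clean bound $\sqrt{2}\,\varepsilon_i(R)$, and it makes transparent why the normalization $1/(1+|p|)$ in \eqref{thm-rep-stab5-11} is precisely the right one: it is what makes the Hamiltonian difference bounded on the unit sphere of directions $(p,-s)$. Its cost is that the H\"ormander formula for \emph{unbounded} convex sets requires the $\infty$-conventions and the matching of domains/recession cones, which is exactly the $s=0$ boundary issue you flag and handle; the paper's route has these difficulties already absorbed into Propositions \ref{0rwwl1}--\ref{0lnrwwl1}. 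One small slip: the common domain of the two support functions is not ``$\{(p,-s):s\geq0\}$'' in general --- at $s=0$ it is only the (possibly proper) subcone of directions $p$ where the recession function $H^{\infty}(t,x,\cdot)$ is finite; what your limit argument actually shows, and all that is needed, is that the two domains coincide with each other and that on this common domain the difference is at most $\sqrt{2}\,\varepsilon_i(R)$.
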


\noindent Proposition \ref{gklconv} can be proven similarly like Propositions \ref{0rwwl1} and \ref{0lnrwwl1}.

\subsection{Proofs of stability theorems}
Let $H_i,H:[0,T]\times\R^n\times\R^n\to\R$, $i\in\N$ be continuous and satisfy \te{(H3)}. We assume $L_i$, $L$ , $i\in\N$ are given by \eqref{tran1}. We consider continuous single-valued maps $M_i,M:[0,T]\times\R^n\to\R_+$, $i\in\N$.

Let $(t,x,a)\in[0,T]\times\R^n\times\R^{n+1}$ and $i\in\N$. We consider the closed balls
\begin{equation*}\begin{split}
&G_i(t,x,a):=\B(M_i(t,x)\,a,2d(M_i(t,x)\,a,E_{L_i}(t,x))),\\
&G(t,x,a):=\B(M(t,x)\,a,2d(M(t,x)\,a,E_{L}(t,x))).
\end{split}\end{equation*}

\noindent We notice that $\|G_i(t,x,a)\|\leq \varphi_i(t,x,a)$ and $\|G(t,x,a)\|\leq \varphi(t,x,a)$ for all $t\in[0,T]$, $x\in\R^n$, $a\in\R^{n+1}$, $i\in\N$, where
\begin{equation*}\begin{split}
&\varphi_i(t,x,a):= M_i(t,x)\,|a|+2d(M_i(t,x)\,a,E_{L_i}(t,x)),\\
&\varphi(t,x,a):= M(t,x)\,|a|+2d(M(t,x)\,a,E_{L}(t,x)).
\end{split}\end{equation*}

Let $P$ be the map defined in Lemma \ref{lem-pmh}. We define the following sets
\begin{equation*}\begin{split}
&\Phi_i(t,x,a):= P(M_i(t,x)\,a,E_{L_i}(t,x))=E_{L_i}(t,x)\cap G_i(t,x,a),\\
&\Phi(t,x,a):= P(M(t,x)\,a,E_{L}(t,x))=E_{L}(t,x)\cap G(t,x,a).
\end{split}\end{equation*}
By hypotheses and Corollary \ref{wrow-wm}, we get that the sets $\Phi_i(t,x,a)$, $\Phi(t,x,a)$ are
nonempty, compact, convex.

We define the single-valued maps \,$e_i,e$\, from \,$[0,T]\times\R^n\times\R^{n+1}$\, to \,$\R^{n+1}$\, by
\begin{equation}\label{stbprof0}
    e_i(t,x,a):= s_{n+1}(\Phi_i(t,x,a)),\quad e(t,x,a):= s_{n+1}(\Phi(t,x,a)),
\end{equation}
where $s_{n+1}$ in the Steiner selection. By Lemma \ref{lem-scmh} we have
\begin{equation}\label{stbprof1}
|e_i(t,x,a)-e(s,y,b)|\leq (n+1)\mathscr{H}(\Phi_i(t,x,a),\Phi(s,y,b))
\end{equation}
for all $t,s\in[0,T]$, $x,y\in\R^n$, $a,b\in\R^{n+1}$, $i\in\N$.

We notice that by the inequality \eqref{stbprof1} and Lemma \ref{lem-pmh}, we have
\begin{equation}\label{stbprof2}
|e_i(t,x,a)-e(t,x,a)|\leq 5(n+1)[\,\mathscr{H}(E_{L_i}(t,x),E(t,x))+|M_i(t,x)-M(t,x)|\,|a|\,].
\end{equation}
for all $t\in[0,T]$, $x\in\R^n$, $a\in\R^{n+1}$, $i\in\N$.

\begin{Th}\label{thm-zjnzz}
Let $H_i$, $H$, $L_i$, $L$, $M_i$, $M$,  $i\in\N$ be as above. If $H_i$ converge to $H$ and $M_i$ converge to $M$ uniformly on compacts, then
\begin{equation*}
e_i(t_i,x_i,a_i)\to e(t,x,a)\;\;\;\it{for every sequence}\;\;\; (t_i,x_i,a_i)\rightarrow (t,x,a).
\end{equation*}
\end{Th}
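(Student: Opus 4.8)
The plan is to reduce the statement to the Hausdorff convergence of the compact convex sets $\Phi_i(t_i,x_i,a_i)$ to $\Phi(t,x,a)$, and then to invoke the Lipschitz continuity of the Steiner selection through \eqref{stbprof1}. Fix a sequence $(t_i,x_i,a_i)\to(t,x,a)$. First I would record that the centres of the balls converge: since $(t_i,x_i)$ stays in a compact set and $M_i\to M$ uniformly on compacts, one has $M_i(t_i,x_i)\,a_i\to M(t,x)\,a$. Next, Proposition \ref{klconv} gives $\lim_{i\to\infty}E_{L_i}(t_i,x_i)=E_L(t,x)$, so by \eqref{zmkno} the radii converge, $d(M_i(t_i,x_i)\,a_i,E_{L_i}(t_i,x_i))\to d(M(t,x)\,a,E_L(t,x))$. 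Consequently, since both the centres and the radii converge, \eqref{dow-omk} together with \eqref{zmkh} yields $\lim_{i\to\infty}G_i(t_i,x_i,a_i)=G(t,x,a)$.

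The core step is to pass from these two convergences to $\lim_{i\to\infty}\Phi_i(t_i,x_i,a_i)=\Phi(t,x,a)$, using $\Phi=E_L\cap G$ and $\Phi_i=E_{L_i}\cap G_i$. Here I would apply Theorem \ref{thmozkp} with $K_i=E_{L_i}(t_i,x_i)$, $D_i=G_i(t_i,x_i,a_i)$, $K=E_L(t,x)$, $D=G(t,x,a)$, all of which are convex. The hypothesis to verify is $K\cap\I D\neq\emptyset$. When $M(t,x)\,a\notin E_L(t,x)$ the radius $2d(M(t,x)\,a,E_L(t,x))$ is strictly positive, and a nearest point of $E_L(t,x)$ lies at distance $d<2d$ from the centre, hence in $\I G(t,x,a)$; thus the hypothesis holds and Theorem \ref{thmozkp} provides the desired convergence $\Phi_i(t_i,x_i,a_i)\to\Phi(t,x,a)$.

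The main obstacle is the degenerate case $M(t,x)\,a\in E_L(t,x)$, in which the radius vanishes, $G(t,x,a)$ is a single point, $\I D=\emptyset$, and Theorem \ref{thmozkp} does not apply. I would treat this case directly: here $\Phi(t,x,a)=\{M(t,x)\,a\}$, while each set $\Phi_i(t_i,x_i,a_i)$ is nonempty and contained in the ball $G_i(t_i,x_i,a_i)$ whose centre tends to $M(t,x)\,a$ and whose radius $2d(M_i(t_i,x_i)\,a_i,E_{L_i}(t_i,x_i))$ tends to $0$ by the convergence of radii established above. Hence every point of $\Phi_i(t_i,x_i,a_i)$ tends to $M(t,x)\,a$, which gives $\mathscr{H}(\Phi_i(t_i,x_i,a_i),\Phi(t,x,a))\to 0$ at once.

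Finally, in both cases the sets involved are compact, convex, and eventually contained in one fixed ball, so by \eqref{zmkh} the Painlev\'e--Kuratowski convergence obtained above is the Hausdorff convergence $\mathscr{H}(\Phi_i(t_i,x_i,a_i),\Phi(t,x,a))\to 0$. The Steiner-selection estimate \eqref{stbprof1}, applied with arguments $(t_i,x_i,a_i)$ and $(t,x,a)$, then yields
\begin{equation*}
|e_i(t_i,x_i,a_i)-e(t,x,a)|\leq (n+1)\,\mathscr{H}(\Phi_i(t_i,x_i,a_i),\Phi(t,x,a))\xrightarrow[i\to\infty]{}0,
\end{equation*}
which is exactly the assertion.
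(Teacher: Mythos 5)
Your proposal is correct and follows essentially the same route as the paper's proof: reduction to Hausdorff convergence of $\Phi_i(t_i,x_i,a_i)$ to $\Phi(t,x,a)$ via the Steiner-selection estimate \eqref{stbprof1}, convergence of centres and radii giving $G_i\to G$, Proposition \ref{klconv} giving $E_{L_i}\to E_L$, Theorem \ref{thmozkp} in the nondegenerate case (your condition $M(t,x)\,a\notin E_L(t,x)$ is exactly the paper's $\I G(t,x,a)\neq\emptyset$), and a direct shrinking-ball argument in the degenerate case. The only cosmetic difference is that you verify the hypothesis $E_L(t,x)\cap\I G(t,x,a)\neq\emptyset$ explicitly via the nearest-point argument, which the paper asserts without detail.
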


\begin{proof}
We notice that if
\begin{equation}\label{thm-zjnzz1}
\mathscr{H}(\Phi_i(t_i,x_i,a_i),\Phi(t,x,a))\to 0,\quad \forall\;(t_i,x_i,a_i)\rightarrow (t,x,a),
\end{equation}
then Theorem follows from the inequality \eqref{stbprof1}. Let $(t_i,x_i,a_i)\rightarrow (t,x,a)$. Then, using hypotheses, we have $M_i(t_i,x_i)\to M(t,x)$. Therefore by Proposition \ref{klconv} and the implication \eqref{zmkno} we obtain $\varphi_i(t_i,x_i,a_i)\to \varphi(t,x,a)$. Therefore there exists a constant $C>\varphi(t,x,a)$ such that  $\varphi_i(t_i,x_i,a_i)\leq C$ for every $i\in\N$. We have $\|G(t,x,a)\|\leq \varphi(t,x,a)$ and $\|G_i(t_i,x_i,a_i)\|\leq \varphi_i(t_i,x_i,a_i)$ for all $i\in\N$, so $\Phi(t,x,a)\subset G(t,x,a)\subset\B_C$ and $\Phi_i(t_i,x_i,a_i)\subset G_i(t_i,x_i,a_i)\subset\B_C$ for all $i\in\N$. It means that by  \eqref{zmkh}, the condition \eqref{thm-zjnzz1} is equivalent to the condition
\begin{equation}\label{thm-zjnzz2}
\lim_{i\to\infty}\Phi_i(t_i,x_i,a_i)=\Phi(t,x,a),\quad \forall\;(t_i,x_i,a_i)\rightarrow (t,x,a).
\end{equation}

Thus, to prove the theorem it is enough to show the condition \eqref{thm-zjnzz2}. Let $(t_i,x_i,a_i)\rightarrow (t,x,a)$. Then by Proposition \ref{klconv}, we obtain
\begin{equation}\label{thm-zjnzz3}
\lim_{i\to\infty}E_{L_i}(t_i,x_i)=E_L(t,x).
\end{equation}
By the inequality \eqref{dow-omk} and adequate convergence, we have that
\begin{equation*}
\mathscr{H}(G_i(t_i,x_i,a_i),G(t,x,a))\leq |\varphi_i(t_i,x_i,a_i)-\varphi(t,x,a)|+2|M_i(t_i,x_i)\,a_i- M(t,x)\,a|\to 0.
\end{equation*}
Besides, we know that $G(t,x,a)\subset\B_C$ and $G_i(t_i,x_i,a_i)\subset\B_C$ for all $i\in\N$ and some constant $C>0$. Then by \eqref{zmkh}, we have the equality
\begin{equation}\label{thm-zjnzz4}
    \lim_{i\to\infty}G_i(t_i,x_i,a_i)=G(t,x,a).
\end{equation}

If $\I G(t,x,a)\not=\emptyset$ then $E_L(t,x)\cap\I G(t,x,a)\not=\emptyset$. Therefore by the equality \eqref{thm-zjnzz3}, \eqref{thm-zjnzz4} and Theorem~\ref{thmozkp}, we have that $\lim_{i\to\infty}\Phi_i(t_i,x_i,a_i)=\Phi(t,x,a)$.

If $\I G(t,x,a)=\emptyset$ then $G(t,x,a)$ and $\Phi(t,x,a)$ are singletons
that contain points $M(t,x)\,a\in E_{L}(t,x)$. Let $y_i\in \Phi_i(t_i,x_i,a_i)$. Then $y_i\in G_i(t_i,x_i,a_i)$. Therefore by definition of $G_i(t_i,x_i,a_i)$ we have $|y_i-M_i(t_i,x_i)\,a_i|\leq 2d(M_i(t_i,x_i)\,a_i,E_{L_i}(t_i,x_i))$. Obviously, $M_i(t_i,x_i)\,a_i\to M(t,x)\,a$. Therefore by the equality \eqref{thm-zjnzz3} and the implication \eqref{zmkno} we obtain $2d(M_i(t_i,x_i)\,a_i,E_{L_i}(t_i,x_i))\to 2d(M(t,x)\,a,E_{L}(t,x))=0$. Therefore $y_i\to M(t,x)\,a$. It means $M(t,x)\,a\in\liminf_{i\to\infty}\Phi_i(t_i,x_i,a_i)$. So by the equality
\eqref{thm-zjnzz4}
\begin{equation*}
\{M(t,x)\,a\}\subset\liminf_{i\to\infty}\Phi_i(t_i,x_i,a_i)
    \subset\limsup_{i\to\infty}\Phi_i(t_i,x_i,a_i)\subset G(t,x,a)=\{M(t,x)\,a\}.
\end{equation*}
Therefore $\lim_{i\to\infty}\Phi_i(t_i,x_i,a_i)=\{M(t,x)\,a\}=\Phi(t,x,a)$ that ends the proof.
\end{proof}

\begin{Th}\label{thm-zjnzz26}
Let $H_i$, $H$, $L_i$, $L$,   $i\in\N$ be as above.  If  $H_i$ and  $H$ satisfy the condition \eqref{thm-rep-stab5-11} then
\begin{equation*}
\sup_{(t,x,a)\,\in\,[0,T]\times\B_R\times\R^{n+1}}|e_i(t,x,a)-e(t,x,a)|
\;\xrightarrow[i\to\infty]{}\;0,\quad\forall\;R>0.
\end{equation*}
\end{Th}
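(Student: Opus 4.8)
The plan is to reduce everything to the uniform Hausdorff convergence $E_{L_i}\to E_L$ furnished by Proposition~\ref{gklconv}, exploiting that the Steiner-selection maps $e_i,e$ are built from these epigraphical maps through the Lipschitz estimate already recorded in~\eqref{stbprof2}.

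First I would fix the role of the scaling maps $M_i,M$. Since the representations under consideration are those constructed in the proof of Theorem~\ref{th-rprez-glo} --- equivalently, via Theorem~\ref{do-th-parame-glo1} --- the relevant choice is $M_i(t,x)\equiv 1$ and $M(t,x)\equiv 1$. Hence $|M_i(t,x)-M(t,x)|=0$ for all $(t,x)$, and the general estimate \eqref{stbprof2} collapses to
\[
|e_i(t,x,a)-e(t,x,a)|\;\leq\;5(n+1)\,\mathscr{H}\bigl(E_{L_i}(t,x),E_L(t,x)\bigr)
\]
for every $(t,x,a)\in[0,T]\times\R^n\times\R^{n+1}$ and every $i\in\N$.

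The decisive feature of this bound is that its right-hand side is independent of the control variable $a$. Consequently, taking the supremum over $(t,x,a)\in[0,T]\times\B_R\times\R^{n+1}$ affects only the left-hand side and yields
\[
\sup_{(t,x,a)\,\in\,[0,T]\times\B_R\times\R^{n+1}}|e_i(t,x,a)-e(t,x,a)|\;\leq\;5(n+1)\sup_{(t,x)\,\in\,[0,T]\times\B_R}\mathscr{H}\bigl(E_{L_i}(t,x),E_L(t,x)\bigr)
\]
for every $R>0$ and every $i\in\N$. Finally, since $H_i$ and $H$ satisfy hypothesis \eqref{thm-rep-stab5-11}, Proposition~\ref{gklconv} tells us that the right-hand side converges to $0$ as $i\to\infty$ for each fixed $R$, which is precisely the assertion.

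I do not expect a serious obstacle here: the analytic difficulty has been entirely absorbed into Proposition~\ref{gklconv} (a Wijsman-type uniform epi-convergence statement) and into the Lipschitz continuity of the Steiner selection underlying \eqref{stbprof2}. The only point demanding attention is to notice that with $M_i=M=1$ the scaling term vanishes, so that the estimate is genuinely uniform in $a$; this is exactly what allows the supremum over the \emph{unbounded} set $\R^{n+1}$ to be controlled without any further argument, and it is the structural reason why $M_i,M$ need not be listed among the data of the statement.
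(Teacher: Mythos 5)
Your proof is correct and is essentially identical to the paper's own argument: set $M_i=M\equiv 1$, note that \eqref{stbprof2} then gives a bound $5(n+1)\,\mathscr{H}\bigl(E_{L_i}(t,x),E_L(t,x)\bigr)$ independent of $a$, take the supremum, and invoke Proposition~\ref{gklconv}. Your closing remark about why the supremum over the unbounded set $\R^{n+1}$ causes no trouble is a helpful clarification of a point the paper leaves implicit.
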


\begin{proof}
Let $M_i(t,x)=M(t,x)\equiv 1$. B the inequality \eqref{stbprof2} and Proposition \ref{gklconv}, we obtain
\begin{align*}
\sup_{(t,x,a)\,\in\,[0,T]\times\B_R\times\R^{n+1}}|e_i(t,x,a)-e(t,x,a)| \;\;&\leq\;\;  5(n+1)\sup_{(t,x)\,\in\,[0,T]\times\B_R}\mathscr{H}(E_{L_i}(t,x),E(t,x))\\ &\xrightarrow[\;i\to\infty\;]{}\;0
\end{align*}
 for every $R>0$, that ends the proof.
\end{proof}

\begin{Rem}
Let $e_i,e:[0,T]\times\R^n\times\R^{n+1}\rightarrow\R^{n+1}$ be functions given by formulas
 \eqref{stbprof0}. For all $i\in\N$ we define the functions $f_i,f:[0,T]\times\R^n\times \R^{n+1}\rightarrow\R^n$ and $l_i,l:[0,T]\times\R^n\times \R^{n+1}\rightarrow\R$ by
formulas:
\begin{equation*}\begin{split}
&f_i(t,x,a):=\pi_v(e(t,x,a))\;\;\;\te{and}\;\;\; l_i(t,x,a):=\pi_\eta(e(t,x,a)),\\
&f(t,x,a):=\pi_v(e(t,x,a))\;\;\;\te{and}\;\;\; l(t,x,a):=\pi_\eta(e(t,x,a)),
\end{split}\end{equation*}
where $\pi_v(v,\eta)=v$ and $\pi_\eta(v,\eta)=\eta$ for every $v\in\R^n$ and $\eta\in\R$.
Then for every $t\in[0,T]$, $x\in\R^n$, $a\in \R^{n+1}$, $i\in\N$ the following
equalities hold:
\begin{equation}\label{dfrz}
e_i(t,x,a)=(f_i(t,x,a),l_i(t,x,a)), \quad e(t,x,a)=(f(t,x,a),l(t,x,a)).
\end{equation}
The equality \eqref{dfrz} and Theorem \ref{thm-zjnzz} imply Theorems \ref{thm-rep-stab1} and
 \ref{thm-rep-stab2} if instead of $M_i(t,x)$, $M(t,x)$ we take $M_i(t,x)=M(t,x)\equiv 1$ and
\begin{equation*}
\begin{split}
    &M_i(t,x):=|\lambda_i(t,x)|+|H_i(t,x,0)|+c_i(t)(1+|x|)+1\\
    &M(t,x):=|\lambda(t,x)|+|H(t,x,0)|+c(t)(1+|x|)+1,
\end{split}
\end{equation*}
respectively. By the equality \eqref{dfrz} and Theorem  \ref{thm-zjnzz26}, we get
Theorem  \ref{thm-rep-stab5} if instead of $M_i(t,x)$, $M(t,x)$ we take   $M_i(t,x)=M(t,x)\equiv 1$.

Theorems \ref{thm-rep-stab3}, \ref{thm-rep-stab4}, \ref{thm-rep-stab6} can be proven similarly as above, indeed, it is enough to fix $t\in[0,T]$.
\end{Rem}

%%%%%%%%%%%%%%%%%%%%%%%%%%%%%%%%%%%%%%%%%%%%%%%%%%%%%%%%%%%%%%%%%%%%%%%%%%%%%%
%%%%%%%%%%%%%%%%%%%%%%%%%%%%%%%%%%%%%%%%%%%%%%%%%%%%%%%%%%%%%%%%%%%%%%%%%%%%%%
%%%%%%%%%%%%%%%%%%%%%%%%%%%%%%%%%%%%%%%%%%%%%%%%%%%%%%%%%%%%%%%%%%%%%%%%%%%%%%
%%%%%%%%%%%%%%%%%%%%%%%%%%%%%%%%%%%%%%%%%%%%%%%%%%%%%%%%%%%%%%%%%%%%%%%%%%%%%%

%\newpage

%\vspace{0.5cm}
%\noindent \bf{Arkadiusz Misztela}\\
%Institute of Mathematics, University of Szczecin,\\
%Wielkopolska 15, 70-451 Szczecin, Poland;\\
%e-mail: arke@mat.umk.pl

%\rightsquigarrow
%\multimap

%\check{}
%\hat{}

\end{document}